\@date \else {\vskip3ex \centering\footnotesize\@date\par\vskip1ex}\fi
\else \@footnotetext{\@setdate}\fi}
\newtheorem{thm}{Theorem}[section]
\newtheorem{lem}[thm]{Lemma}
\newtheorem{cor}[thm]{Corollary}
\newtheorem{prop}[thm]{Proposition}
\theoremstyle{definition}
\newtheorem{rem}[thm]{Remark}
\def\restr{\!\!\upharpoonright_}
\def\Gr{\mathrm{Gr}}
\def\m{\multimap}
\def\f{\varphi}
\def\E{\mathbb{E}}
\def\e{\varepsilon}
\def\pr{\right )}
\def\le{\left (}
\def\K{\mathcal{K}}
\def\cl{\mathrm{cl}\,}
\def\conv{\mathrm{conv}}
\def\Ind{\mathrm{Ind}}
\def\R{\mathbb{R}}
\def\mm{\kern +2pt \raisebox{+0.5 pt}{$\shortmid$}\kern -2pt\hbox{$\multimap$}\kern +2pt}
\title[Krasnosels'kii formula]{The constrained Krasnosels'kii formula for parabolic differential inclusions}
\author{Wojciech Kryszewski and Jakub Siemianowski}
\address{Faculty of Mathematics and Computer Sciences, Nicolaus Copernicus University}
\email{wkrysz@mat.umk.pl, jsiem@mat.umk.pl}
\subjclass[2010]{34A60, 34K09, 35K58, 47H04, 47H11, 47J35, 54C60, 55M20}
\keywords{Krasnosel'skii--Poincar\'{e} operator, evolution differential inclusion, topological degree, set-valued map}
\begin{document}

\baselineskip=14pt

\mbox{}

\maketitle

\vspace{-8mm}

\begin{abstract}
We consider a constrained evolution inclusions of  parabolic type \eqref{inkluzja-rozn} involving an $m$-dissipative linear operator  and the source term of multivalued type in a Banach space and topological properties of the solution map.
We show a relation between the constrained fixed point index of the Krasnosel'skii--Poincar\'{e} operator of translation along trajectories associated with \eqref{inkluzja-rozn} and the appropriately defined constrained degree of $A + F\le 0 , \cdot \pr $ of the right-hand side in \eqref{inkluzja-rozn}. Our results extend those of \cite{cw} and \cite{gab-krysz}.
\end{abstract}

\section{Introduction}

We study the initial value problem for a  semilinear differential inclusion
\begin{equation}\label{inkluzja-rozn}
\begin{cases}
\dot{u}\le t \pr \in A  u\le t \pr  + F \le t , u\le t \pr \pr ,\quad t\in [0,1],\; u \in \K,\\
u\le 0 \pr =x \in \K,
\end{cases}
\end{equation}
where $\E$ is a Banach space, $\K\subset \E$ is a closed convex set of state constraints, $A:D \le A \pr \subset \E \to \E $ generates a compact strongly continuous linear semigroup $\{S(t)\}_{t\geq 0}$ on $\E$ and $F:[0,1]\times \K \m \E$ is a set-valued map. A continuous $u:[0,1]\to\E$ is a (mild) {\em solution} to  \eqref{inkluzja-rozn} if it stays in $\K$, i.e., $u(t)\in \K$ and
$$u(t)=S(t)x+\int_0^tS(t-s)w(s)\,ds$$
for all $t\in [0,1]$, where $w\in L^1([0,1],\E)$ and $w(s)\in F(s,u(s))$ a.e. on $[0,1]$.\\
\indent The study of \eqref{inkluzja-rozn} is justified and motivated by a partial differential inclusion of parabolic type
\[\begin{cases}
u_t  - \Delta u \in \f \le t,x, u \pr, \quad t\in [0,1],\, x\in \Omega,\, u \in K\\
u(0, \cdot ) = g=(g_1,...,g_N) \in L^2 \le \Omega ,\R^N\pr ,\; g\le x\pr \in K \text{ for a.e. }x\in \Omega,\\
u| _{[0,1] \times \partial \Omega} = 0,
\end{cases}\leqno\le\ast\pr\
\]
where $\Omega \subset \R^M$ is a bounded domain with smooth boundary, $K\subset \R^N$ is convex closed and $\f:[0,1]\times \Omega \times K\m \R^N$ is upper semicontinuous with convex compact values. Generalized systems of the form $\le\ast\pr$ model reaction-diffusion processes with uncertain reaction term or (via set-valued regularization) those with  discontinuous reaction term. We are looking for (strong) solutions with values in $K$, i.e. $u = \le u_1, \ldots ,u_N \pr : [0,1]\times \Omega \to\R^N$ such that $u(t,x)\in K$ for all $t\in [0,1]$, $x\in\Omega$, $u_i(t,\cdot)\in H^1\cap H^2(\Omega)$ for a.a. $t\in [0,1]$, the function $t\mapsto h_i(t):=\Delta u_i(t,\cdot)$ belongs to $L^1([0,1],L^2(\Omega))$ and $u_i(t,\cdot)=g+\int_0^t(h(s)+w_i(s))\,ds$ for all $i=1,...,N$, where $w=(w_1,...,w_N):[0,1]\to L^2(\Omega,\R^N)$ is integrable and $w(s)(x)\in\f(s,x,u(s,x))$ a.e. for $s\in [0,1]$ and $x\in\Omega$. The role of the constraining set $K$ may be explained as follows: treating  $u_i$ as the concentration of the $i$-th among $N$ components under diffusion, one has  $u_i \geq 0$  since concentration cannot be negative. On the other hand, there is an upper bound, say $u_i(t,x)\leq R_i$ on $[0,1]\times\Omega$, beyond which the $i$-th component is saturated. Thus, the natural question is to study the existence and behavior of solutions  $u= \le u_1,\ldots , u_N \pr$ in the cube $[0,R_1] \times \ldots \times [0,R_N]$.
This is just a heuristic simplification, and so, instead of the cube, we consider an arbitrary closed convex set $K$.

In order to get solutions to \eqref{inkluzja-rozn} we will rely on the semigroup invariance of $\K$ and the {\em weak tangency} condition:
\begin{equation}\label{tangency}
F\le t ,x \pr \cap T_\K \le x \pr \neq \emptyset\text{ for all }t\in [0,1], x\in \K,
\end{equation}
where
\begin{equation}\label{def tang}
T_\K \le y\pr := \mathrm{cl} \bigcup_{h>0} h^{-1}\le \K -x \pr=\left\{v\in\E\;\middle\vert\;\lim_{h\to 0^+}\frac{1}{h}d(y+hv,\K)=0\right\}
\end{equation}
stands for the {\em tangent cone} to $\K$ at $y\in \K$ ($\cl$ stands for the closure and $d(z,\K)$ is the distance of $z\in\E$ to $\K$). These conditions, being in fact too strong for the existence only, are very well-justified and, moreover, imply the $R_\delta$-structure of the set of all solutions to \eqref{inkluzja-rozn} and allow to compare the fixed point index of the Poincar\'{e} $t$-operator $\Sigma_t :\K \m \K$, $t>0$, associated with \eqref{inkluzja-rozn} given by
$$\Sigma_t\le x \pr:= \left \{ u \le t\pr \in \K \mid u \text{ is a solution of }\le \ast \pr,\;u \le 0 \pr = x\right \},\;x\in \K,$$
with the below introduced constrained topological degree of the right-hand side $A+F\le 0, \cdot \pr$. In this way we obtain a generalization of the celebrated Krasnosel'skii formula.\\
\indent Recall that the classical Krasnosel'skii formula concerns an ODE
$\dot{x} = f \le t, x \pr$, $x\in \R^N$, $t\in [0,1]$, with locally Lipschitz  $f:[0,1]\times \R^N \to \R^N$, admitting global solutions.
If $U\subset \R^N$ is open bounded and $f(x,0)\neq 0$ for $x$ in the boundary $\partial U$ of $U$, then the Brouwer degrees $\mathrm{deg}_B \le -f\le 0, \cdot \pr , U\pr=\mathrm{deg}_B \le I - P_t , U\pr$, where  $P_t$ is the associated Poincar\'{e} operator (cf. \cite[Lem. 13.1., 13.2.]{krasnoselskii}).  An infinite dimensional variant of the Krasnosel'skii formula was obtained in \cite{cw} in the case of  \eqref{inkluzja-rozn} with single-valued, time-independent and locally Lipschitz nonlinearity $F$ and in the context of bifurcation results
in \cite{gab-krysz}, where the unconstrained situation was considered.

After this introduction the paper is organized as follows: in the second section we introduce the notation along with some auxiliary lemmata; in the third one we discuss in detail  assumptions on $A$, $\K$ and $F$ in \eqref{inkluzja-rozn} and show that they are motivated and follow directly from the natural and mild hypotheses concerning $\le\ast\pr$.
In the fourth section we establish the $R_\delta$-structure of solutions to \eqref{inkluzja-rozn} and, in the fifth one the appropriate degree of the right-hand side in \eqref{inkluzja-rozn} is defined. In the final, sixth section we prove the announced Krasnosel'skii formula.

\section{Preliminaries}
In what follows $\le \E , \| \cdot \| \pr $ denotes a real Banach space, while $\E^\ast$ is the  normed topological dual of $\E$; we write $\langle x,p \rangle $ instead of $p\le x\pr $ for $ x\in \E, p\in \E^\ast$; $\mathcal{L}\le E\pr $ denotes the space of bounded linear operators on $\E$. By $L^1([0,T],\E)$ (resp. ${\mathcal C}([0,1],\E)$) we denote the space of Bochner integrable (resp. continuous) functions $u:[0,T]\to\E$. Recall that $A\subset L^1 \le [0,1], \E \pr$ is \emph{integrably bounded}\label{integrably-bounded} if there exists $\lambda \in L^1\le [0,1], \R\pr$ such that $\| \alpha \le t\pr \| \leq \lambda \le t \pr$ a.e. for every  $\alpha \in A$. If $X$ is a metric space, $\e>0$ then  $B_X(A,\e):=\{x\in X\mid d\le x;A\pr := \inf _{a\in A} d\le x,a\pr<\e\}$. If $X\subset \E$, $Y$ is a topological space, then a continuous $f\colon X\to Y$ is {\em compact} or \emph{completely continuous} if $f\le B \pr $ is relatively compact for each bounded $B\subset X$.

A {\em set-valued} map $ \f : X \m Y$ assigns to each $x\in X$ a nonempty subset $\f \le x \pr \subset Y$.
If $X, Y$ are topological spaces, then $\f$ is \emph{upper semicontinuous} or usc (resp. \emph{lower semicontinuous} or lsc) if $\f^{-1} \le A\pr := \left \{ x\in X \mid \f \le x \pr \cap A \neq \emptyset \right \}$ is closed (resp.  open) for every closed $A\subset Y$.
If $X\subset \E$, then $\f :X\m Y$ is \emph{compact} if it is usc and $\f \le B \pr := \bigcup_{x\in B} \f \le x \pr $ is relatively compact for any bounded $B\subset X$. If $X, Y$ are metric spaces, then $\f:X \m Y$ is \emph{H-usc} (resp. {\em H-lsc})  if for any $x_0\in X$ and $\e>0$ there is $\delta >0$ such that  $\f \le x \pr \subset B_Y \le \f \le x_0 \pr , \e \pr $ (resp. $\f \le x_0 \pr \subset  B_Y\le \f \le x\pr,\e\pr$) for $x\in B_X\le x_0, \delta \pr $ (see \cite{gorn}, \cite{winter} for details and examples concerning set-valued maps).

We present two results that will be frequently used in a form adopted for our needs. The first one is a simple modification of \cite[Lem. 17.]{bader-krysz}.
\begin{lem}\label{selekcja-styczna}
Let $\K\subset\E$ be closed convex, $F:[0,1] \times \K \m \E$ be tangent to $\K$ (see \eqref{tangency}) and H-usc with convex values. For any continuous  $\alpha: [0,1] \times \K \to \le 0, \infty \pr $ there is a locally Lipschitz  $f :[0,1] \times \K \to \E$ such that $f \le t ,x \pr \in T_ \K\le x \pr$ and
\[
f\le t ,x \pr \in F \le B_{[0,1]} \le t, \alpha \le t, x \pr \pr \times B_\K \le x, \alpha \le t ,x \pr \pr \pr + B _\E \le 0, \alpha \le t,x \pr \pr \quad \text{for } t \in [0,1],\; t\in \K.
\eqno\square\]
\end{lem}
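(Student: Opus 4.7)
Since $[0,1]\times\K$ is metric and hence paracompact, the natural strategy is to piece together local approximate tangent selections via a locally Lipschitz partition of unity, in the spirit of Michael's theorem combined with a convex--tangency trick. For each $(t_0,x_0)\in[0,1]\times\K$ I use \eqref{tangency} to pick a vector $v_0\in F(t_0,x_0)\cap T_\K(x_0)$. By \eqref{def tang}, for every $\delta>0$ there exist $h_0>0$ and $y_0\in\K$ with $\|x_0+h_0v_0-y_0\|<h_0\delta$; I fix $\delta_0\in(0,\alpha(t_0,x_0)/4)$ and, using H-upper-semicontinuity of $F$ together with continuity of $\alpha$, I shrink to a neighbourhood $U_0$ of $(t_0,x_0)$ on which simultaneously $F(t_0,x_0)\subset F(t,x)+B_\E(0,\delta_0)$, $\alpha(t,x)>2\delta_0$, $\|x-x_0\|<h_0\delta_0$ and $|t-t_0|<\alpha(t_0,x_0)/2$.

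\emph{Assembly.} The metric space $[0,1]\times\K$ admits, subordinate to the open cover $\{U_0\}$, a locally finite, locally Lipschitz partition of unity $\{\lambda_i\}_{i\in I}$; for each $i$ I pick a centre $(t_i,x_i)$ with $\mathrm{supp}\,\lambda_i\subset U_{(t_i,x_i)}$ and record the associated data $(h_i,y_i,v_i,\delta_i)$. Set
\[
g_i(t,x):=h_i^{-1}(y_i-x),\qquad f(t,x):=\sum_{i\in I}\lambda_i(t,x)g_i(t,x).
\]
Each $g_i$ is affine in $x$ and the sum is locally finite with locally Lipschitz weights, so $f$ is locally Lipschitz. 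Since $\K$ is convex, $h_i^{-1}(y_i-x)\in h_i^{-1}(\K-x)\subset T_\K(x)$ for every $x\in\K$; convexity of the cone $T_\K(x)$ then yields $f(t,x)\in T_\K(x)$.

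\emph{Approximation and obstacle.} Fix $(t,x)$ and restrict to the finitely many $i$ with $\lambda_i(t,x)>0$; for each such $i$ one has $(t,x)\in U_i$. The identity
\[
g_i(t,x)-v_i=h_i^{-1}(y_i-x_i-h_iv_i)+h_i^{-1}(x_i-x)
\]
combined with the size conditions on $U_i$ gives $\|g_i(t,x)-v_i\|<2\delta_i$, while H-upper-semicontinuity yields $v_i=u_i+r_i'$ with $u_i\in F(t,x)$ and $\|r_i'\|<\delta_i$; therefore $g_i(t,x)=u_i+r_i$ with $\|r_i\|<3\delta_i<\alpha(t,x)$. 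Convexity of $F(t,x)$ forces $\sum_i\lambda_i(t,x)u_i\in F(t,x)$, so $f(t,x)\in F(t,x)+B_\E(0,\alpha(t,x))$, which is contained in $F(B_{[0,1]}(t,\alpha(t,x))\times B_\K(x,\alpha(t,x)))+B_\E(0,\alpha(t,x))$ as required. The principal delicate point is the simultaneous calibration of $(h_i,\delta_i)$ with the H-usc envelope and the $\alpha$-budget on one and the same neighbourhood $U_i$; once this bookkeeping is done, convexity of $F(t,x)$ absorbs the convex combination back into $F(t,x)$.
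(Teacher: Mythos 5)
Your overall skeleton is the right one for this kind of tangent selection lemma (which the paper itself does not reprove but attributes to \cite[Lem.~17]{bader-krysz}): pick at each $(t_0,x_0)$ a vector $v_0\in F(t_0,x_0)\cap T_\K(x_0)$, approximate it by an \emph{inward} direction $h_0^{-1}(y_0-x)$ which is automatically in $T_\K(x)$ for every $x\in\K$ by convexity, and glue these affine local fields together with a locally Lipschitz partition of unity; the tangency of $f$ then follows because $\bigcup_{h>0}h^{-1}(\K-x)$ is a convex cone, and the locally Lipschitz character is clear. All of that is correct and is exactly the standard mechanism.

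There is, however, a genuine error in the approximation step, and it is not merely bookkeeping. You write that on $U_0$ one has $F(t_0,x_0)\subset F(t,x)+B_\E(0,\delta_0)$, and later that ``H-upper-semicontinuity yields $v_i=u_i+r_i'$ with $u_i\in F(t,x)$''. This is the \emph{wrong} direction: H-usc at $(t_0,x_0)$ gives $F(t,x)\subset F(t_0,x_0)+B_\E(0,\e)$ for $(t,x)$ near $(t_0,x_0)$, not the reverse. The inclusion you invoke is H-\emph{lower} semicontinuity, which is not assumed. Consequently the claim that each $v_i$ can be written as an element of $F(t,x)$ up to a small error is unjustified, and the convexity of $F(t,x)$ cannot be used to absorb the combination $\sum_i\lambda_i v_i$ in the way you describe. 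To repair this one must produce, for the fixed $(t,x)$, a \emph{single} convex set $F(t_j,x_j)$ with $(t_j,x_j)\in B_{[0,1]}(t,\alpha(t,x))\times B_\K(x,\alpha(t,x))$ that (up to the $\alpha$-budget) contains \emph{all} the $v_i$ with $\lambda_i(t,x)>0$. The usual device is to choose $j$ among those finitely many indices so that the radius $\delta_j$ of the H-usc calibration is maximal; then for each such $i$ one has $d\bigl((t_i,x_i),(t_j,x_j)\bigr)<\delta_i/2+\delta_j/2\leq\delta_j$, whence $v_i\in F(t_i,x_i)\subset F(t_j,x_j)+B_\E(0,\cdot)$, and convexity of $F(t_j,x_j)$ then yields $f(t,x)\in F(t_j,x_j)+B_\E(0,\alpha(t,x))\subset F\le B_{[0,1]}(t,\alpha(t,x))\times B_\K(x,\alpha(t,x))\pr+B_\E(0,\alpha(t,x))$. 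For this to work the refinement must be subordinated to the cover by balls of radius $\delta(t_0,x_0)/2$ and the diameters must be controlled against $\delta$, $\alpha$ and the $h_0$'s simultaneously; these calibrations are precisely what your write-up leaves unaddressed, and they are the crux of the argument. As written, the proposal would only prove the conclusion under an H-lsc hypothesis, which is a strictly different statement.
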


If $\le S, \mathcal{F} \pr$ is a measure space, $X$ is a Polish space and $Y$ is a topological space, then
$\f:S\times X \m Y$ is said to be {\em product measurable} if, for every open $U\subset Y$, $\f ^{-1}\le U \pr$ belongs the product $\sigma$-algebra $\mathcal{F}\otimes \mathfrak{B}\le X \pr$, where $\mathfrak{B} \le X \pr $ is the Borel $\sigma$-algebra in $X$.

\begin{thm}\cite[Th. 3.2]{nguyen}\label{nguyen}
Let $\E$ be a separable Banach space, $\K\subset \E$ be closed convex amd
let $F,G:[0,1] \times \K \m \E$ be product measurable (on $[0,1]$ the Lebesgue $\sigma$-algebra is considered)
with closed convex values and such that $F(t,x) \cap G(t,x)\neq\emptyset$ for all $(t,x)$. If  $F \le t, \cdot \pr$ is H-usc and $G \le t, \cdot \pr$ is lsc, for $t\in [0,1]$, then for every $\e >0$ there is a Carath\'eodory map $f:[0,1]\times \K \to \E$  (i.e. $f\le t,\cdot \pr$ is continuous for every $t\in [0,1]$ and $f\le \cdot , x \pr$ is measurable for every $x\in \K$) such that
$$f\le t,x \pr \in G\le t,x \pr\;\;\hbox{and}\;\;f\le t,x \pr \in F \le \left \{ t \right \} \times B_\K \le x,\e \pr \pr + B_\E \le 0 , \e \pr $$
for all $t\in [0,1]$ and $x\in \K$.\hfill$\square$
\end{thm}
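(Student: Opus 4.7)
The plan is to combine a Kuratowski–Ryll-Nardzewski measurable selection of $F\cap G$ at points of a countable dense subset of $\K$ with a continuous partition of unity, in the spirit of Fryszkowski–Rybiński Carath\'eodory-type selection theorems; the $H$-usc hypothesis on $F$ guarantees that the outcome is $\varepsilon$-close to $F$, while the lsc hypothesis on $G$ combined with convexity of values will force the exact selection property with respect to $G$.

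First I would exploit separability of $\E$ (hence of $\K$) to fix a countable dense set $\{x_k\}_{k\in\mathbb N}\subset\K$. For each $k$ the multifunction $t\mapsto F(t,x_k)\cap G(t,x_k)$ is measurable with nonempty closed convex values (measurable as an intersection of two product-measurable maps restricted to $[0,1]\times\{x_k\}$, nonempty by hypothesis), so by the Kuratowski–Ryll-Nardzewski theorem it admits a measurable selection $w_k:[0,1]\to\E$. Next I would choose radii $\delta_k\in(0,\varepsilon)$ and a locally finite open cover $\{U_k\}$ of $\K$ with $U_k\subset B_\K(x_k,\delta_k)$, together with a subordinate continuous partition of unity $\{p_k\}$; the choice $\delta_k<\varepsilon$ will automatically give $x\in U_k\Rightarrow x_k\in B_\K(x,\varepsilon)$, which is the mechanism that forces the $F$-proximity.

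The middle step is the technical heart: I would use lower semicontinuity of $G(t,\cdot)$ to upgrade each $w_k$ to a Carath\'eodory map $g_k:[0,1]\times\K\to\E$ satisfying $g_k(t,x)\in G(t,x)$ for all $(t,x)$ and $\|g_k(t,x)-w_k(t)\|<\varepsilon$ whenever $x\in U_k$. Pointwise (fixed $t$) existence of such a continuous $g_k(t,\cdot)$ is Michael's selection theorem applied to the lsc convex-valued map $x\mapsto G(t,x)\cap B_\E(w_k(t),\varepsilon)$ on a neighbourhood of $x_k$; joint measurability/continuity requires the Carath\'eodory strengthening of Michael (Kucia–Fryszkowski), which is where product measurability of $G$ is essential.

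Finally I would define $f(t,x):=\sum_k p_k(x)\,g_k(t,x)$. Local finiteness makes the sum well-defined, $f(t,\cdot)$ continuous, and $f(\cdot,x)$ measurable as a finite combination of measurable functions with continuous coefficients. Convexity of $G(t,x)$ yields $f(t,x)\in G(t,x)$ since each contributing $g_k(t,x)$ lies in $G(t,x)$. For the second conclusion, whenever $p_k(x)>0$ we have $x_k\in B_\K(x,\varepsilon)$ and $\|g_k(t,x)-w_k(t)\|<\varepsilon$ with $w_k(t)\in F(t,x_k)\subset F(\{t\}\times B_\K(x,\varepsilon))$, so each $g_k(t,x)$ lies in $F(\{t\}\times B_\K(x,\varepsilon))+B_\E(0,\varepsilon)$; by convexity of $F$-values, together with a routine re-use of $H$-upper semicontinuity of $F(t,\cdot)$ to absorb the discrepancy between distinct $x_k$'s into a slightly larger $\varepsilon$, the same inclusion survives the convex combination. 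The main obstacle is the Carath\'eodory selection in the middle step: getting $g_k$ simultaneously continuous in $x$, measurable in $t$, close to $w_k(t)$, and an exact selection of $G$ is the nontrivial ingredient and is where product measurability of $G$ really has to be used.
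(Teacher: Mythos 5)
The paper does not actually prove this theorem; it is quoted verbatim from Juniewicz--Nguyen--Ziemi\'nska (the $\square$ marks a cited result), so there is no in-paper argument to compare yours against, and I have to judge your sketch on its own terms. It has a genuine gap, and the source of it is the same for both of the steps you flag as delicate: the semicontinuity hypotheses on $F(t,\cdot)$ and $G(t,\cdot)$ are imposed for each fixed $t$ separately and carry no uniformity over $t\in[0,1]$, whereas your cover $\{U_k\}$, radii $\delta_k$, and partition of unity $\{p_k\}$ must be chosen once, independently of $t$. In the middle step, for $g_k$ to exist you need $G(t,x)\cap B_\E(w_k(t),\e)\neq\emptyset$ for every $x\in U_k$ and every $t$; lower semicontinuity of $G(t,\cdot)$ at $x_k$ gives a $\delta(t)>0$ making this true for $\|x-x_k\|<\delta(t)$, but nothing prevents $\inf_{t}\delta(t)=0$, so no single radius $\delta_k$ works for all $t$, and the Carath\'eodory--Michael theorem cannot be invoked on a domain that shrinks with $t$.

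The final verification has the same defect in a more visible form. Each $g_k(t,x)$ with $p_k(x)>0$ lies in $F(t,x_k)+B_\E(0,\e)$ with $x_k\in B_\K(x,\e)$, so $f(t,x)=\sum_k p_k(x)g_k(t,x)$ lies in $\conv\left(\bigcup_k F(t,x_k)\right)+B_\E(0,\e)$; but the target $F(\{t\}\times B_\K(x,\e))+B_\E(0,\e)$ is a \emph{union} of translates of the convex sets $F(t,y)$ and is not itself convex, so the required inclusion does not follow from convexity of the individual values. Your proposed fix, absorbing the spread of the $F(t,x_k)$ via $H$-upper semicontinuity of $F(t,\cdot)$, would succeed only if the $H$-usc modulus at $x_k$ were bounded below uniformly in $t$, so that the cover could be refined once and for all to force every $F(t,x_j)$ with $\mathrm{supp}\,p_j\cap\mathrm{supp}\,p_k\neq\emptyset$ into a single $F(t,x_k)+B_\E(0,\e)$; again, that uniformity is not among the hypotheses. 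The standard device that supplies the missing $t$-uniformity in Carath\'eodory selection theorems of this type is a Scorza--Dragoni/Lusin step: restrict to compact sets $I_\eta\subset[0,1]$ of nearly full measure on which $F$ and $G$ become jointly (H-)continuous, carry out the Michael-type construction there with a modulus now uniform on $I_\eta$, and patch countably many such pieces together measurably. That ingredient, or some equivalent measurable-uniformity argument, is absent from your sketch, and without it both steps fail.
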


\section{From the system of PDE's to an abstract problem}

Let us make the following standing assumptions with respect to \eqref{inkluzja-rozn}:

$\le A \pr $ $A : D \le A \pr \to \E$ generates a {\em compact} $C_0$-semigroup $\left \{ S \le t \pr \right \} _{t\geq 0}$ of linear operators on $\E$;

$\le K \pr $ A closed convex $\K\subset\E$ is {\em semigroup invariant}, i.e. $S\le t \pr \le \K \pr \subset \K$ for every $t\geq 0$;

$\le F_1\pr$ $F : [0,1] \times \K \m \E$ has convex weakly compact values;

$\le F_2 \pr $ \parbox[t]{0.8\textwidth}{ $F$ is product measurable and for any $t\in [0,1]$, the map $\K \ni x \mm F \le t, x \pr \subset \E$ is H-usc;}

$\le F_3\pr$ there is $c>0$ such that $\sup _{y\in F\le t, x\pr} \|y\| \leq c \le 1 + \|x\| \pr \;\text{ for all }\; t\in [0,1],\; x\in \K$;

$\le F_4 \pr $ $F$ is {\em tangent} to $\K$, i.e. $F \le t,x \pr \cap T_\K \le x \pr \neq \emptyset$ for all  $t\in [0,1]$ and $x\in\K$ (see \eqref{def tang}).

We shall show that these assumptions are consistent with hypotheses usually made with respect to the system $\le\ast\pr$. But first let us collect some comments.
\begin{rem}\label{slabo-usc}
(a) In view of (A) there are  $M \geq 1$, $\omega \in \R$ such that $\|S \le t \pr \| \leq M \e ^ {\omega t}$ for $t\geq 0$. For $h>0$ and $h\omega<1$, the {\em resolvent} $J_h := \le \text{I} - h A \pr ^{-1} : \E \to D \le A \pr \subset \E$ is well-defined, belongs to $\mathcal{L}\le E\pr $ and $\| J_h \| \leq M\le 1 - h\omega \pr ^{-1}$ (cf. \cite{pazy}). By \cite[Th. 2.3.3]{pazy},  $\left \{ S\le t\pr \right\}_{t\geq 0}$ is \emph{compact} (i.e. for any $t >0$ an operator $S\le t\pr \in \mathcal{L}\le \E \pr $ is compact) if and only if it is {\em resolvent compact}, i.e.  for $h>0 , h\omega  <1$,  $J_h$ is compact and $\le 0, +\infty \pr \ni t \mapsto S \le t \pr \in \mathcal{L} \le \E \pr $ is continuous.\\
\indent (b)  Assumption $\le K\pr$ means that if the reaction term $F$ vanishes, then the diffusion process $u(t)=S(t)x$, $x\in \K$, survives in $\K$. It holds if and only if $J_h \le \K \pr \subset \K$ for $h>0$ with  $h\omega <1$ (comp. \cite[Sec. 3.1.]{KanKry} and cf. \cite[Rem. 4.6.]{cw}).\\
\indent (c) Assumptions $\le F_1 \pr$-$\le F_2 \pr $ together with \cite[Proposition 2.3]{bothe} imply that for all $t\in [0,1]$ the set-valued map $F \le  t ,\cdot \pr :  \K \m \E$ is {\em weakly usc}, i.e. usc with respect to the original topology in $\K$ and the weak topology in $\E$. In particular, for each $t\in [0,1]$ the image $F\le \{t\}\times D\pr \subset \E$ of a compact subset $D\subset  \K $ is weakly compact. Moreover, since values of $F$ are convex, we gather that the graph of $F(t,\cdot)$ is closed in $\K\times\E$, where the original topology in $\K$ and the weak topology in $\E$ are considered, i.e., if $x_n\to x$ in $\K$, $y_n\in F(t,x_n)$ and $y_n\rightharpoonup y$ (weakly), then $y\in F(t,x)$. Condition $\le F_3\pr$ implies the global (unconstrained) existence of solutions.\\
\indent (d) It is easy to see that $\le K\pr$ implies that for all $x\in\K$
$$T_\K(x) \subset T_\K^A(x):=\left\{v\in\E\;\middle\vert\;\liminf_{t\to 0^+}\frac{1}{t}d(S(t)x+tv,\K)=0\right\}.$$
Hence $(K)$ together with $(F_4)$ imply that
\begin{equation}\label{Astycz}F(t,x)\cap T_\K^A(x)\neq\emptyset,\;\;x\in\K,\; t\in [0,1].\end{equation}
The sets $T_\K^A(x)$, $x\in \K$, have been introduced by Pavel \cite{Pavel} and condition \eqref{Astycz} shown to be necessary and sufficient for the existence of (mild) solutions surviving in $\K$ of \eqref{inkluzja-rozn}, when $F$ single-valued continuous. This condition is is also sufficient for the existence in case of a H-usc set-valued perturbation $F$ (see \cite[\S 4.5]{bothe-hab} and \cite{Pavel1}); see also \cite[Chap. 9]{CNV} for a detailed discussion of different tangency issues. Our study of $(K)$ along with $(F_4)$ is motivated by by Proposition \ref{uzasad0}, the second part of Proposition \ref{uzasad}  and Remark \ref{uzasad1}.
\end{rem}
\indent Let us now return to $\le\ast\pr$ and make the following assumptions:

$(\f_1)$ $\f:[0,1]\times \Omega \times K \m \R^N$  is usc with convex compact values;

$(\f_2)$ $\;\sup_{v\in \f \le t,x,y \pr} |v| \leq  \alpha \le x \pr + c |y|$, for some $\alpha \in L^2 \le \Omega \pr $, $c>0$ and for all $t\in [0,1]$, $x\in \Omega$ and $y \in K$.

$(\f_3)$ $\f$ is {\em tangent} to $K$, i.e. $\; \f \le t,x,y \pr \cap T_K\le y \pr \neq \emptyset$ for $t\in [0,1]$, $x\in \Omega$ and  $y\in K$ (\footnote{See \eqref{def tang} with $K$ replacing $\K$.})

\begin{rem}\label{uzasad1}
In order to understand the physical meaning of $(\f_3)$ consider an important special case when $K=\R^N_+$, i.e., $u=(u_1,...,u_N)\in K$ if and only if $u_i\geq 0$ for $i=1,...,M$. Interpreting $\le\ast\pr$ as the reaction-diffusion problem describing the dynamics of concentration  $u_1(x),...,u_N(x)$, $x\in\Omega$, of $N$ reacrants being subject to diffusion and reaction term, the usual
assumption of nonnegativity of $\f$ not realistic. Assumption $\f\geq 0$ implies that during chemical processes all substances are only produced, while, in fact, during reaction some reactants vanish or are transformed into another compounds. The realistic assumption is that if a reactant $i$ vanishes in some area (i.e., $u_i=0$), its amount in this area cannot decrease. This observation leads immediately to tangency (observe that if $u\in K$ with $u_i=0$, then $T_K(u)=\{v\in\R^n\mid v_i\geq 0\}$) $\f(u)\cap T_K(u)\neq\emptyset$ meaning exactly that $u_i$ can only increase.
\end{rem}

Put $\E:=L^2(\Omega,\R^N)$, $D \le A \pr : = H_0^1 \le \Omega, \R^N \pr \cap H^2 \le \Omega ,\R^N \pr $ and define $A : D \le A \pr \to \E$ by
\begin{equation}\label{opp}
A u := \le \Delta u_1 , \ldots , \Delta u_N \pr
\end{equation}
for $u=(u_1,...,u_N)\in D \le A \pr $, where $\Delta u_i$ denotes the usual Laplacian of a function $u_i:\Omega\to\R$. In view of \cite[Theorem 7.2.5]{pazy} $A$ generates an analytic and resolvent compact  semigroup of contractions $\left \{ S \le t\pr \right \}_{t \geq 0}$, i.e.,  $M=1$ and $\omega<0$ in Remark \ref{slabo-usc} (a).\\
\indent  Let
\begin{equation}\label{sett}
\K:= \left \{ v \in \E\mid v \le x \pr \in K \text{ for a.e. } x\in \Omega  \right \}.
\end{equation}
It is immediate to see that $\K$ is closed  convex. In order to get $(K)$, i.e., to show that  $J_h(\K)\subset\K$ when $h>0$ and $h\omega<1$ observe that $$K=\bigcap_{y\in K}(y+T_K(y))$$ and, hence, it is sufficient to consider the case when $K=y_0+C$, where $y_0\in\R^N$ and $C\subset\R^N$ is a closed convex cone. Then $\K=u_0+{\mathcal C}$ where $u_0(x)\equiv y_0$ on $\Omega$ and ${\mathcal C}:=\{u\in\E\mid u(x)\in C\;\hbox{a.e. on}\;\Omega\}$. Since $J_h(u_0)=u_0$, it is sufficient to show the invariance of $\mathcal C$. \\
\indent Take $v\in {\mathcal C}$. Since $C^\infty$ functions in $\mathcal C$ are dense in $\mathcal C$ we may assume that $v$ is $C^\infty$. Let $u=J_h(v)$; by the classical regularity theory $u\in C^\infty(\cl\Omega)$ and $u\restr{\partial\Omega}=0$. Let
$$p\in C\,^\ast:=\{p\in\R^N\mid \langle y,p\rangle\geq 0\;\hbox{for all}\;y\in C\}.$$ Then $v_p:=\langle p,v(\cdot)\rangle$, $u_p:=\langle p,u(\cdot)\rangle$ are $C^\infty$, $v_p\geq 0$ on $\Omega$ and $\langle p,Au(\cdot)\rangle=\Delta u_p$. Let $x_0\in\cl\Omega$ be such that $u_p(x)\geq u_p(x_0)$ for all $x\in\cl\Omega$. If $x_0\in\partial\Omega$, then $u_p(x)\geq 0$ on $\Omega$. If $x_0\in\Omega$, then the second derivative $D^2 u_p(x_0)$ is nonnegative; this implies that $\Delta u_p(x_0)\geq 0$. Hence
$$u_p(x_0)=\langle p, (u-hAu)(x_0)\rangle+h\langle  p,Au(x_0)\rangle= v_p(x_0)+h\Delta u_p(x_0)$$ and again $u_p\geq 0$ on $\Omega$. Since $p$ was arbitrary, we gather that $u(x)\in C$ on $\Omega$, i.e., $u\in {\mathcal C}$.\\
\indent We have shown
\begin{prop}\label{uzasad0} If $A$ is given by \eqref{opp} and $\K$ by \eqref{sett}, then assumptions $(A)$ and $(K)$ are satisfied.\hfill $\square$\end{prop}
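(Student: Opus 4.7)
I would verify the two claims of the proposition separately: $(A)$ follows from classical linear PDE theory, while $(K)$ hinges on a reformulation in terms of the resolvent combined with a scalar maximum principle. The reduction via tangent cones is the conceptual heart; the pointwise argument is the PDE engine.

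For $(A)$, the componentwise Dirichlet Laplacian on the bounded smooth domain $\Omega$ generates an analytic $C_0$-semigroup of contractions on $\E=L^2(\Omega,\R^N)$ with domain $D(A)=H^1_0(\Omega,\R^N)\cap H^2(\Omega,\R^N)$ (Pazy's theorem cited in the excerpt). Rellich--Kondrachov supplies the compact embedding $H^1_0(\Omega)\hookrightarrow L^2(\Omega)$, so the resolvent $J_h=(I-hA)^{-1}$ is compact for any $h>0$ with $h\omega<1$. Analyticity of $\{S(t)\}_{t\geq 0}$ provides the uniform-operator continuity of $(0,\infty)\ni t\mapsto S(t)\in\mathcal{L}(\E)$. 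By Remark \ref{slabo-usc}(a) the two together are equivalent to compactness of $S(t)$ for each $t>0$, which is $(A)$.

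For $(K)$, the criterion in Remark \ref{slabo-usc}(b) reduces the claim to the invariance $J_h(\K)\subset\K$. I would exploit the representation
\[
K=\bigcap_{y\in K}\bigl(y+T_K(y)\bigr),
\]
which realises any closed convex $K\subset\R^N$ as an intersection of translated closed convex cones, to reduce invariance of $\K$ to invariance of the corresponding sets $\mathcal{C}=\{u\in\E\mid u(x)\in C\text{ a.e.}\}$ attached to closed convex cones $C\subset\R^N$. In this pure cone case, for $v\in\mathcal{C}$ approximated by smooth functions, the image $u=J_hv$ is smooth up to $\partial\Omega$, vanishes on $\partial\Omega$ and satisfies $u-h\Delta u=v$. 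Testing against each $p$ in the polar cone $C^\ast$ reduces the matter to the scalar $u_p:=\langle p,u(\cdot)\rangle$, which solves $u_p-h\Delta u_p=v_p\geq 0$ with $u_p=0$ on $\partial\Omega$. A minimum of $u_p$ on $\overline{\Omega}$ lies either on $\partial\Omega$ (where $u_p=0$) or at an interior point (where $\Delta u_p\geq 0$, so $u_p=v_p+h\Delta u_p\geq 0$); hence $u_p\geq 0$ throughout $\Omega$. Since $p\in C^\ast$ is arbitrary, the bipolar identity $C^{\ast\ast}=C$ gives $u(x)\in C$ a.e., i.e.\ $u\in\mathcal{C}$; density and continuity of $J_h$ extend this to all of $\mathcal{C}$.

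The delicate point I anticipate as the main obstacle is the reduction from the translated cone $y+T_K(y)$ (with $y\neq 0$) to the pure cone case, because $J_h$ does not fix nonzero constants: the constant function $x\mapsto y$ is not in $H^1_0$, so $J_h$ applied to it is not the constant. One way around this is to run the maximum-principle argument on the shifted scalar $u_p-\langle p,y\rangle$ with inhomogeneous Dirichlet data at the boundary, which is most naturally handled when $0\in K$ (the standing setting of the motivating cubes and positive orthants); alternatively, one uses directly the pointwise cone structure of the tangent cones $T_K(y)$ in the intersection representation and transfers invariance to $\K$ cone-by-cone. Either route leaves the scalar minimum principle as the sole PDE ingredient, and the remaining difficulty is bookkeeping.
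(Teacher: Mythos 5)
Your strategy for $(A)$ and the cone-by-cone reduction for $(K)$ are exactly the paper's, and your diagnosis of the ``delicate point'' is sharper than the paper's own text: the paper reduces $\K$ to $u_0+\mathcal{C}$ and then asserts ``Since $J_h(u_0)=u_0$, it is sufficient to show the invariance of $\mathcal C$.'' This claim is \emph{false} when $y_0\neq 0$, precisely for the reason you give -- $J_h$ ranges in $D(A)\subset H^1_0$, so $J_h(u_0)$ vanishes on $\partial\Omega$ and cannot equal the nonzero constant $u_0$. You have found a genuine gap in the published argument.

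Where your proposal itself falls short is in treating the repair as ``bookkeeping.'' It is not: the fix requires the extra hypothesis $0\in K$, and without it the proposition is simply wrong. (Take $N=1$, $\Omega=(0,1)$, $K=[1,2]$, $v\equiv 1$: then $w=J_h v$ solves $w-hw''=1$, $w(0)=w(1)=0$, so $w<1$ near the endpoints and $J_h v\notin\K$.) The hypothesis $0\in K$ is of course implicit -- otherwise $\K\cap D(A)=\emptyset$ and the Dirichlet problem $(\ast)$ has no admissible states -- but it must be used, and it is used as follows. Fix $y\in K$, set $C:=T_K(y)$ and $\K_y:=\{u\in\E : u(x)\in y+C\ \hbox{a.e.}\}$, so $\K=\bigcap_{y\in K}\K_y$ and it suffices to show $J_h(\K_y)\subset\K_y$. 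Take smooth $v\in\K_y$, $w:=J_h v$, and $p\in C^\ast$; put $w_p:=\langle p,w(\cdot)\rangle$, $v_p:=\langle p,v(\cdot)\rangle$, so $w_p-h\Delta w_p=v_p\geq\langle p,y\rangle$ with $w_p|_{\partial\Omega}=0$. Since $0\in K\subset y+C$, we have $-y\in C$ and hence $\langle p,y\rangle\leq 0$; thus the \emph{boundary values} of $w_p$ already dominate $\langle p,y\rangle$, and at an interior minimum $x_0$ one has $\Delta w_p(x_0)\geq 0$, whence $w_p(x_0)=v_p(x_0)+h\Delta w_p(x_0)\geq\langle p,y\rangle$. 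So $w_p\geq\langle p,y\rangle$ on $\Omega$; since $p\in C^\ast$ was arbitrary and $C^{\ast\ast}=C$, we get $w(x)-y\in C$ a.e., i.e.\ $w\in\K_y$. Density of smooth fields in $\K_y$ and continuity of $J_h$ finish the case, and intersecting over $y$ gives $(K)$.

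So: same approach as the paper, a correct observation that the paper's $J_h(u_0)=u_0$ step is untenable, but your two proposed remedies are left at the level of a sketch and neither records the crucial inequality $\langle p,y\rangle\leq 0$ (equivalently, $0\in K$) that actually makes the minimum-principle argument close. Make that explicit and the proof is complete; omit it and the statement is false.
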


Let $F:[0,1] \times \K \m \E$ be the {\em Nemytskii operator} associated with $\f$, i.e.
\begin{equation}\label{operator_niemyckiego}
F\le t, u \pr : = \left \{ v \in \E\mid v \le x \pr \in \f \le t , x , u \le x \pr \pr \text { for a.e. } x \in \Omega \right \}
\end{equation}
for $t\in [0,1], u \in \K$.
The values of $F$ are clearly nonempty, but {\em not compact} in general.
\begin{prop}\label{uzasad} If  $\f $ satisfies conditions $(\f_1) $ -- $(\f_3)$, then assumption $(F_1) - (F_3)$ are satisfied. In fact $F$ is H-usc (with respect to both variables). Assumption $(\f_3)$ implies $(F_4)$. Moreover any (mild) solution to \eqref{inkluzja-rozn} is a (strong) solutions to $\le\ast\pr$.
\end{prop}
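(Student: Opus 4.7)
The plan is to verify each of $(F_1)$--$(F_4)$ from the pointwise structure of $\f$ via the Nemytskii formula \eqref{operator_niemyckiego}, and to close with a regularity argument for the strong-solution claim. Convexity of $F(t,u)$ is inherited pointwise from $\f$, and $(F_3)$ follows from $(\f_2)$ by Minkowski: $\|v\|_\E\leq\|\alpha\|_{L^2}+c\|u\|_\E$ for every $v\in F(t,u)$. Weak compactness of $F(t,u)$ reduces to norm-boundedness plus strong closedness (obtained by extracting an a.e.-convergent subsequence and using the closed values of $\f$) combined with reflexivity of $L^2$. Product measurability of $F$ is standard once $\f$ is usc and hence Borel.

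For the joint H-upper semicontinuity of $F$ in $(t,u)$---slightly stronger than what $(F_2)$ demands---I plan to argue by contradiction. Take $(t_n,u_n)\to(t_0,u_0)$ and $v_n\in F(t_n,u_n)$ with $d_\E(v_n,F(t_0,u_0))\geq\e_0>0$. Passing to a subsequence along which $u_n\to u_0$ a.e.\ and $|u_n(x)|\leq h(x)$ for some $h\in L^2(\Omega)$, define $\tilde v_n(x)$ as the metric projection of $v_n(x)$ onto the compact convex set $\f(t_0,x,u_0(x))$; this is a measurable selection lying in $F(t_0,u_0)$. Upper semicontinuity of $\f$ forces $|v_n(x)-\tilde v_n(x)|\to 0$ a.e., while $(\f_2)$ supplies the $L^2$-dominant $2\alpha(x)+c(h(x)+|u_0(x)|)$; dominated convergence then yields $\|v_n-\tilde v_n\|_\E\to 0$, contradicting the initial assumption.

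For tangency $(F_4)$, fix $(t,u)$ and consider the multifunction $\Phi(x):=\f(t,x,u(x))\cap T_K(u(x))$, nonempty closed convex by $(\f_3)$. Its measurability follows from the Borel-measurability of $x\mapsto\f(t,x,u(x))$ together with the closed graph of $y\mapsto T_K(y)$ on $K$, and Kuratowski--Ryll-Nardzewski supplies a measurable selection $v$, which lies in $L^2(\Omega,\R^N)$ by $(\f_2)$. To upgrade to $v\in T_\K(u)$ I use the pointwise-projection estimate
\begin{equation*}
d_\E(u+hv,\K)^2\leq\int_\Omega d\bigl(u(x)+hv(x),K\bigr)^2\,dx,
\end{equation*}
divide by $h^2$, and apply dominated convergence: the integrand tends to $0$ a.e.\ by $v(x)\in T_K(u(x))$, and is bounded by $|v(x)|^2\in L^1$ since $d(u(x)+hv(x),K)\leq h|v(x)|$ (because $u(x)\in K$).

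The strong-solution assertion is a standard consequence of analytic-semigroup regularity: any selection $w(s)\in F(s,u(s))$ lies in $L^\infty([0,1],\E)\subset L^2$ by $(F_3)$ and the continuity of $u$, and the analytic semigroup generated by the Dirichlet Laplacian on $\E$ smooths the data into $D(A)=H^1_0\cap H^2(\Omega,\R^N)$, yielding $u_i(t,\cdot)\in H^1_0\cap H^2$ for a.a.\ $t$ with $\Delta u_i(t,\cdot)=Au_i(t,\cdot)$ integrable on $[0,1]$; the mild formula then rewrites componentwise as the required strong-solution identity. I expect the subtlest step to be the tangency $(F_4)$: the measurable selection through the intersection of the usc $\f$ with the merely lower-semicontinuous tangent-cone map requires care, and the $L^2$-realization of the pointwise tangency hinges on producing an honest $L^1$-dominant for DCT.
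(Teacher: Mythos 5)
Your decomposition matches the paper's, and most steps are sound, but two points deserve comment.

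Your proof of the joint H-upper semicontinuity is genuinely different from, and arguably cleaner than, the paper's. After extracting an a.e.-convergent and $L^2$-dominated subsequence, the paper invokes Egorov and a multivalued Lusin theorem to get a set $\Omega_\eta$ of near-full measure on which $u_n\to u_0$ uniformly and $H_0|_{\Omega_\eta}$ is H-lsc, then derives a uniform inclusion $H_n(x)\subset H_0(x)+B(0,\delta)$ on $\Omega_\eta$ for all large $n$ and splits the $L^2$-estimate over $\Omega_\eta$ and its complement. You sidestep Egorov/Lusin entirely: the pointwise metric projection $\tilde v_n(x)$ of $v_n(x)$ onto $\f(t_0,x,u_0(x))$ satisfies $|v_n(x)-\tilde v_n(x)|=d\bigl(v_n(x),\f(t_0,x,u_0(x))\bigr)\to 0$ a.e.\ (since $\f$ is usc with compact values), and the $L^2$-dominant from $(\f_2)$ lets dominated convergence finish. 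This is a more direct route to the same conclusion.

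There is, however, a small flaw in your tangency step. You justify the measurability of $\Phi(x)=\f(t,x,u(x))\cap T_K(u(x))$ by appealing to ``the closed graph of $y\mapsto T_K(y)$ on $K$''. For a closed convex $K$ the tangent-cone map $T_K$ is \emph{lower} semicontinuous with closed convex values, but its graph is in general not closed; for $K=[0,1]\subset\R$ one has $T_K(1/n)=\R$ for $n\geq 2$, so $(1/n,-1)$ lies in the graph while $(0,-1)$ does not, since $T_K(0)=[0,\infty)$. The correct justification is the one the paper uses: $T_K$ is lsc, hence Borel-measurable, so $H:=T_K\circ u$ is measurable, and the intersection of the measurable compact-valued $G(x)=\f(t,x,u(x))$ with the measurable closed-valued $H$ is measurable; Kuratowski--Ryll-Nardzewski then applies. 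Your $L^2$-upgrade of pointwise tangency, via the bound $d(u(x)+hv(x),K)\leq h|v(x)|$ and dominated convergence, is fine and amounts to an explicit proof of the inclusion that the paper obtains by citing Aubin--Frankowska, Cor.~8.5.2. The strong-solution claim is, as you say, standard analytic-semigroup regularity, which the paper handles by citing Showalter.
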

\begin{proof} It is easy to see $(F_1)$ and $(F_3)$. Suppose $F$ is not H-usc, i.e., there are $\e _0>0$, sequences $\le t_n , u_n \pr \to \le t_0, u_0 \pr $ in $[0,1] \times\E$ and $v_n \in F \le t_n , u_n \pr $ such that
\begin{equation}\label{w-k_do_sprzecznosci}
v_n \notin F \le t_0 , u_0 \pr + B_\E(0,\e_0),\;\;n\geq 1.
\end{equation}
Up to a subsequence $\le u_n \pr _{n\geq 1}$ converges a.e. on $\Omega$ to $u_0$ and there is $h\in L^2 \le \Omega,\R \pr $ such that $|u_n \le x \pr | \leq h \le x \pr $ for a.e. $x\in \Omega$ and every $n\geq 0$.
By $\le \f _3 \pr $
\[
|v_n \le x \pr| \leq \alpha \le x \pr + c|u_n \le x \pr |  \leq \alpha \le x \pr + c h \le x \pr \text{ for }n\geq 0\text{ and }a.e.\; x\in \Omega.
\]
There is $\eta >0$ such that for $A \subset \Omega$ with Lebesgue measure $\mu \le A \pr <\eta$
\begin{equation}\label{wzor-absolutna-ciaglosc-calki}
\int _A  4\le \alpha\le x \pr + c h\le x \pr \pr ^2\mathrm{d}x  < \e_0^2 / 2.
\end{equation}
For each $n\geq 0$, $H_n:=\f \le t_n, \cdot, u_n \le \cdot \pr \pr :\Omega \m \R^N$ is measurable and
if $w:\Omega \to \R^N$ is a measurable selection of $H_n$, then $w \in\E$ since, in view of $\le\f_3\pr$, \begin{equation}\label{w-k_wzrostu}
|w\le x \pr | \leq \alpha \le x \pr + c h \le x \pr  \quad \text{for a.e. }x\in \Omega.
\end{equation}
\indent By the Egorov and Lusin theorems (see \cite[Th. 1]{averna} for a multivalued version of the Lusin theorem) there is a compact  $\Omega_\eta \subset \Omega$ such that $\mu \le \Omega \setminus \Omega_\eta \pr < \eta$, $u_n \to u_0$ uniformly on $\Omega_\eta$,
the restriction $u_0\restr {\Omega_\eta}:\Omega_\eta \to \R^N$ is continuous and  $H_0\restr {\Omega_\eta} : \Omega_\eta \m \R^N$ is H-lsc.\\
\indent Let $\delta := \e_0 /\sqrt{2\mu \le \Omega \pr }$. We will show that there is $n_0$ such that if $n\geq n_0 $ and $x\in \Omega_\eta$, then
\[
H_n \le x \pr \subset H_0 \le x \pr + B_{\R^N} \le 0, \delta \pr .
\]
Suppose to the contrary that  there is a subsequence $\le n_j \pr _{j\geq 1}$ and a sequence $\le x_j \pr _{j\geq 1 }$ in $\Omega_\eta$ such that
\begin{equation}\label{w-k_H_0}
H_{n_j} \le x_j \pr \not\subset H_0 \le x_j \pr + B_{\R^N} \le 0 ,\delta \pr .
\end{equation}
We can assume that $x_j \to x_0 \in \Omega_\eta$, since $\Omega_\eta$ is compact. The continuity of $u_0\restr {\Omega _\eta}$ and the uniform convergence $u_n \to u_0$ on $\Omega_\eta$ imply that $u_{n_j}(x_j)\to u_0(x_0)$ and thus $\le t_{n_j}, x_j, u_{n_j}\le x_j \pr \pr \to \le t_0, x_0, u_0 \le x_0 \pr\pr$ as $j\to \infty$. The upper semicontinuity of $\f$ together with the H-lower semicontinuity of $H_0$ on $\Omega _\eta $ show that
$H_{n_j}\le x_j \pr \subset H_0\le x_j \pr +B_{\R^N} \le 0 ,\delta\pr$
for sufficiently large $j$, which contradicts \eqref{w-k_H_0}.\\
\indent Let us fix $n\geq n_0$. For a.e. $x\in \Omega_\eta$ we have
\begin{equation}\label{w-k_selekcja_v_n}
v_n\le x\pr \in H_n \le x \pr \subset H_0\le x \pr + B_{\R^N} \le 0, \delta \pr.
\end{equation}
Observe that the map $\Omega_\eta \ni x\mm B_{\R^N} \le v_n \le x \pr ,\delta \pr \cap H_0\le x\pr$ is measurable and has nonempty values for a.e. $x\in \Omega_\eta$.
By the Kuratowski--Ryll-Nardzewski theorem, there is a measurable selection $v: \Omega_\eta \to \R^N $, i.e. $v\le x \pr \in B_{\R^N} \le v_n \le x \pr ,\delta \pr \cap H_0\le x\pr $ for a.e. $x\in \Omega_\eta$.
Clearly $v\in L^2 \le \Omega_\eta , \R^N\pr$ and for a.e. $x\in \Omega_\eta$, $|v_n\le x\pr -v\le x \pr| <\delta$. Thus
\[
\int _{\Omega_\eta} |v_n\le x \pr - v\le x \pr|^2\mathrm{d}x < \delta^2 \mu \le \Omega_\eta \pr < \e_0^2 / 2.
\]
Take an arbitrary selection $w:\Omega \to \R^N$ of $H_0$, i.e. $w\le x\pr \in H_0 \le x\pr $ for a.e. $x\in \Omega$.
Let $\chi$ be the indicator of $\Omega_\eta$.
Notice that $\chi v +(1-\chi)w:\Omega \to \R^N$ is a square-integrable selection of $H_0$ (we identify $v:\Omega_\eta \to \R^N$ with the function $v:\Omega \to \R^N$ putting $v\equiv 0$ on $\Omega\setminus \Omega_\eta$).
By \eqref{w-k_wzrostu}
\[
|v_n\le x\pr - w \le x \pr | \leq |v_n\le x\pr | + |w\le x\pr |\leq 2\le \alpha\le x \pr  + c h \le x \pr \pr\quad\text{for a.e. }x\in \Omega\setminus \Omega_\eta.
\]
Recall that  $\mu \le \Omega \setminus \Omega _\eta\pr<\eta$, hence and by \eqref{wzor-absolutna-ciaglosc-calki}
\begin{multline*}
\| v_n - \chi v+ (1- \chi)w\|^2= \int _{\Omega_\eta} |v _n\le x\pr - v \le x \pr |^2 \mathrm{d}x + \int_{\Omega \setminus \Omega_\eta}|v_n \le x \pr - w \le x  \pr |^2\mathrm{d}x \\
< \e_0^2/2 + \int _{\Omega \setminus \Omega_\eta} 4\le \alpha \le x \pr+ h\le x \pr \pr ^2 \mathrm{d}x <\e_0^2.
\end{multline*}
Thus, contrary to \eqref{w-k_do_sprzecznosci}, $v_n \in F \le t_0, u_0 \pr + B_{L^2 \le \Omega , \R^N \pr} \le 0, \e_0 \pr$ for infinitely many $n\geq 1$.\\
\indent In order to check $\le F_4\pr$ fix  $t\in [0,1]$, $u\in \K$ and define $G,H:\Omega \m \R^N, $ by
\[
G\le x \pr := \f \le t, x, u \le x \pr \pr, \; H\le x \pr := T_K \circ u \le x\pr \qquad \text{ for } x\in \Omega.
\]
The map $T_K(\cdot):K\m\R^N$ is lsc, $G$ is measurable; hence $\Omega\ni x\m G(x)\cap H(x)\subset\R^N$ is measurable with nonempty values. By  the Kuratowski--Ryll-Nardzewski theorem, there is a measurable  $v:\Omega \to \R^N$ such that $v\le x \pr \in G \le x \pr \cap H \le x \pr$ for a.e. $x\in \Omega$. Clearly $v\in\E$ and $v\in T_\K(u)\cap F(t,u)$ since in view of \cite[Cor. 8.5.2]{aubin} $T_\K(u)=\{v\in\E\mid v(x)\in T_K(u(x))\;\hbox{a.e. in}\;\Omega\}.$\\
\indent The last part has been established in \cite{gab-krysz} in the unconstrained case; this proof follows immediately from a general result in \cite[Proposition III.2.5]{Show}. Here the same arguments apply. \end{proof}

\section{Existence and structure of solutions}

In this section we assume that conditions
$\le A\pr$, $\le K \pr$, $\le F_1\pr $ -- $\le F_4\pr$ hold and $\E$ is a {\em separable} Banach space. The compactness of  $\{S(t)\}_{t\geq 0}$ implies that:
\begin{lem}\label{uzwarcanie}
The operator $K_0 :L^1 \le [0,1] ,\E \pr \to \mathcal{C}\le [0,1], \E \pr$ defined by
\[
K_0 \le y \pr \le t \pr := \int_0^t S \le t-s \pr y \le s \pr \mathrm{d}s\quad\text{for }y\in L^1 \le [0,1] ,\E \pr,\; t\in [0,1],
\]
maps integrably bounded subsets of $L^1 \le [0,1],\E\pr $ into compact subsets of $\mathcal{C}\le [0,1], \E \pr$.\hfill $\square$
\end{lem}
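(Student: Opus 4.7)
The plan is to apply the Arzelà--Ascoli theorem in $\mathcal{C}([0,1],\E)$, so I need to verify two things for an arbitrary integrably bounded $B\subset L^1([0,1],\E)$ (with bound $\lambda\in L^1([0,1],\R)$): equicontinuity of the family $\{K_0(y)\mid y\in B\}$ and, for each $t\in [0,1]$, relative compactness of the set $\{K_0(y)(t)\mid y\in B\}$ in $\E$.

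For pointwise relative compactness, I would treat $t=0$ trivially ($K_0(y)(0)=0$) and, for $t>0$, exploit the compactness of $S(\eta)$ for $\eta>0$ (assumption $(A)$). Given $\eta\in(0,t)$, split
\[
K_0(y)(t)=S(\eta)\int_0^{t-\eta}S(t-\eta-s)y(s)\,ds+\int_{t-\eta}^{t}S(t-s)y(s)\,ds.
\]
Since $\|S(\tau)\|\leq M_0$ on $[0,1]$ for some $M_0$, the inner integral in the first summand ranges over a bounded subset of $\E$ as $y$ varies in $B$, and compactness of $S(\eta)$ makes the first summand lie in a relatively compact set. The second summand has norm at most $M_0\int_{t-\eta}^{t}\lambda(s)\,ds$, which is arbitrarily small with $\eta$. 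Thus $\{K_0(y)(t)\mid y\in B\}$ is totally bounded and hence relatively compact.

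For equicontinuity, given $0\leq t_1<t_2\leq 1$, I would write
\[
K_0(y)(t_2)-K_0(y)(t_1)=\int_0^{t_1}\bigl(S(t_2-s)-S(t_1-s)\bigr)y(s)\,ds+\int_{t_1}^{t_2}S(t_2-s)y(s)\,ds,
\]
and bound the second term by $M_0\int_{t_1}^{t_2}\lambda(s)\,ds$, which is uniformly small in $y$ by absolute continuity of the Lebesgue integral. For the first term, the key ingredient is norm-continuity of $t\mapsto S(t)$ on $(0,\infty)$, recorded in Remark \ref{slabo-usc}(a) as a consequence of the compactness of the semigroup. Fixing a small $\delta>0$, on $[0,t_1-\delta]$ one has $\|S(t_2-s)-S(t_1-s)\|$ uniformly small as $t_2-t_1\to 0$ (by uniform continuity of $S$ on the compact set $[\delta,1]\subset(0,\infty)$), while on $[t_1-\delta,t_1]$ the integrand is dominated by $2M_0\lambda(s)$ whose integral is small with $\delta$. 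The case $t_1=0$ reduces to the second summand alone.

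The main obstacle is the first term in the equicontinuity estimate, because naively $s\mapsto S(t-s)$ need not be uniformly continuous up to $s=t$; this is exactly why the compactness hypothesis on $\{S(t)\}_{t\geq 0}$, via the Pazy characterisation cited in Remark \ref{slabo-usc}(a), is indispensable. Once that observation is in place, Arzelà--Ascoli closes the argument.
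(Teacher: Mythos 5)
Your proof is correct and follows the standard argument: Arzelà--Ascoli, with pointwise relative compactness obtained by peeling off $S(\eta)$ (using compactness of the semigroup) and equicontinuity obtained from the norm-continuity of $t\mapsto S(t)$ on $(0,\infty)$, which is exactly what the paper invokes via the Pazy characterization in Remark~\ref{slabo-usc}(a). The paper itself states Lemma~\ref{uzwarcanie} without proof (the $\square$ marks it as a known consequence of the compactness of $\{S(t)\}_{t\geq 0}$), and the argument you supply is precisely the expected textbook one.
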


We are going to show that the set of all (mild) solutions to \eqref{inkluzja-rozn} surviving in $\K$ is a compact $R_\delta$-subset of $\mathcal{C}\le [0,1],\E\pr$, i.e., can be represented as the intersection of a decreasing sequence of compact absolute retracts (see also e.g. \cite[p. 14]{gorn} for a detailed discussion of the class of $R_\delta$-sets). In an unconstrained case this is known (see e.g. \cite{gab-krysz} or \cite{CWZ}). Assumptions $(K)$ and $(F_4)$ imply the {\em viability}, but certainly do not prevent that some solution escaper from $\K$; hence it is not clear what is the structure of solutions that stay in $\K$. Apart from the presence of constraints in \eqref{inkluzja-rozn}, we deal with weakly compact convex valued and  not necessarily usc perturbations, while elswhere (see e.g. \cite{koz} or \cite{bader-krysz})  compact convex valued usc perturbations are studied.\\
\indent In the proof the following characterization will be used: \emph{If $X_0=\bigcap_{n=1}^\infty X_n$, where $X_n\neq\emptyset$ is closed contractible,  $X_n\supset X_{n+1}$ for all $n\geq 1$, and the Hausdorff measure of noncompactness $\beta(X_n)\to 0$, then $X_0$ is an $R_\delta$-set}.
\begin{thm}\label{tw.o-istnieniu}
For a fixed $x_0\in \K$, the set $X_0$ of solutions in $\K$ of \eqref{inkluzja-rozn} starting at $x_0$ is an $R_\delta$ subset of $\mathcal{C}\le [0,1], \K \pr$.
\end{thm}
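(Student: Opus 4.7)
The plan is to invoke the $R_\delta$-characterization recalled immediately before the statement, so I will produce a decreasing sequence $(X_n)_{n\geq 1}$ of nonempty, closed, contractible subsets of $\mathcal{C}([0,1],\K)$ with $\beta(X_n)\to 0$ and $\bigcap_n X_n = X_0$. For each $n$ I apply Lemma \ref{selekcja-styczna} with $\alpha\equiv 1/n$ (after, if necessary, a mollification step upgrading the Carath\'eodory approximations of Theorem \ref{nguyen} to locally Lipschitz ones in the purely abstract setting) to obtain a locally Lipschitz $f_n\colon[0,1]\times\K\to\E$ with $f_n(t,x)\in T_\K(x)$ and
\[
f_n(t,x)\in F\bigl(B_{[0,1]}(t,1/n)\times B_\K(x,1/n)\bigr)+B_\E(0,1/n).
\]
Tangency of $f_n$, invariance $(K)$ and the sublinear bound $(F_3)$ guarantee that $\dot v=Av+f_n(t,v)$, $v(0)=x_0$, has a unique global mild solution $u_n\in\mathcal{C}([0,1],\K)$. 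I then define $X_n$ as the set of all $u\in\mathcal{C}([0,1],\K)$ with $u(0)=x_0$ of the form $u(t)=S(t)x_0+\int_0^t S(t-s)w(s)\,\mathrm{d}s$ for some $w\in L^1([0,1],\E)$ satisfying $w(s)\in F(B_{[0,1]}(s,1/n)\times B_\K(u(s),1/n))+B_\E(0,1/n)$ a.e. Clearly $u_n\in X_n$ and $X_0\subset X_{n+1}\subset X_n$.

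Growth $(F_3)$ combined with Gronwall gives a uniform bound on $\|u\|_\mathcal{C}$ over $u\in X_n$ (independent of $n$), so the associated inhomogeneities $w$ are integrably bounded and Lemma \ref{uzwarcanie} places $X_n$ inside a fixed compact subset of $\mathcal{C}([0,1],\E)$; in particular $\beta(X_n)=0$. Closedness of $X_n$ follows from weak $L^1$-compactness of bounded uniformly integrable families together with a Mazur-type argument exploiting the weak graph-closedness of $F(t,\cdot)$ noted in Remark \ref{slabo-usc}(c). For contractibility, I use the deformation $h\colon[0,1]\times X_n\to X_n$ given by $h(\lambda,u)(t)=u(t)$ for $t\leq\lambda$ and, for $t\geq\lambda$, the value at $t$ of the unique mild solution of $\dot v=Av+f_n(\tau,v)$ with initial datum $v(\lambda)=u(\lambda)$: tangency of $f_n$ and $(K)$ keep this continuation inside $\K$, the fact that $f_n$ itself is an admissible inhomogeneity for $X_n$ yields $h(\lambda,u)\in X_n$, continuity in $(\lambda,u)$ comes from the local Lipschitz character of $f_n$, and $h(0,\cdot)\equiv u_n$ while $h(1,\cdot)=\mathrm{id}$.

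Lastly, $\bigcap_n X_n\subset X_0$: given $u$ in the intersection, the corresponding $w_n$'s are uniformly bounded in $L^\infty$ by $(F_3)$, hence uniformly integrable, so by Dunford--Pettis a subsequence converges weakly in $L^1$ to some $w$. A convex combination/Mazur argument together with the weak graph-closedness of $F(t,\cdot)$ and the shrinking enlargements forces $w(s)\in F(s,u(s))$ a.e., giving $u\in X_0$. The main obstacle is the contractibility step: the homotopy $h$ must stay simultaneously inside $\K$, inside $X_n$ and depend continuously on $(\lambda,u)$, which is exactly where the combined use of tangency, semigroup invariance $(K)$ and the local Lipschitz character of $f_n$ becomes indispensable; a secondary delicate point is that $F$ is only weakly compact convex valued and merely H-usc in $x$, so the final weak-convergence identification of the limit selection must rely on Remark \ref{slabo-usc}(c) rather than on the simpler strong closure arguments available for compact-valued perturbations.
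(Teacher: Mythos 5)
Your proof follows the paper's broad skeleton (approximate tangent selections, nested solution sets $X_n$ of enlarged inclusions, the $R_\delta$-characterization via closedness + contractibility + vanishing noncompactness), but two substantive obstacles are glossed over, and one of them is fatal for the argument as written.

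First, you apply Lemma~\ref{selekcja-styczna} to $F$, but that lemma requires $F$ to be H-usc \emph{jointly} in $(t,x)$, whereas under $(F_2)$ $F$ is merely product measurable in $t$ and H-usc in $x$. Your parenthetical ``mollification step'' is not a side remark: it is precisely the heart of the paper's Step~1. One must first use Theorem~\ref{nguyen} to get a Carath\'eodory map $f_n$, then a Scorza--Dragoni type theorem to extract an increasing sequence of compact sets $I_n\subset[0,1]$ with $\mu([0,1]\setminus I_n)\le\e_n$ on which $f_n$ becomes jointly continuous, and only then interpolate linearly across the gaps to get a globally continuous $\widehat{f}_n$ to which Lemma~\ref{selekcja-styczna} can be applied. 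This interpolation is what forces the appearance of a convex hull $\conv F\bigl(B_{[0,1]}(t,\e_n)\times B_\K(x,\e_n)\bigr)$ off $I_n$; your $f_n(t,x)\in F\bigl(B(t,1/n)\times B(x,1/n)\bigr)+B(0,1/n)$ (without $\conv$) need not be achievable.

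Second, and more seriously, your $X_n$ carries a \emph{time ball} $B_{[0,1]}(s,1/n)$ for every $s$. Since $F(\cdot,x)$ is only measurable, the inclusion $\bigcap_n X_n\subset X_0$ cannot be deduced: knowing $w(s)\in F\bigl(B(s,1/n)\times B(u(s),1/n)\bigr)+B(0,1/n)$ for all $n$ does not force $w(s)\in F(s,u(s))$, because nothing prevents $F$ from jumping on any neighbourhood of $s$. Concretely, if $F(t,x)=\{g(t)\}$ with $g$ measurable and nowhere continuous (say two constant values on a fat Cantor set and its complement), then the time ball ``leaks'' the wrong value for a set of $s$ of positive measure, and $\bigcap_n X_n$ strictly contains $X_0$. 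The paper avoids this by using the sets $I_n$: on $I_n$ (which increase to a full-measure set) the constraint on the selection involves $F$ at the \emph{exact} time $t$, $F(\{t\}\times B_\K(x,\e_n))+B_\E(0,2\e_n)$, so the convergence theorem together with the weak closedness of the graph of $F(t,\cdot)$ applies pointwise in $t$; the time ball (and the convex hull) occur only on $[0,1]\setminus I_n$, whose measure tends to zero, and this contribution is killed in the integral estimate $\|K_0((1-\chi_n)w_n)\|\to 0$. Without this dichotomy between $I_n$ and its complement, your final identification step ``shrinking enlargements forces $w(s)\in F(s,u(s))$'' does not go through.

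The remaining ingredients of your outline -- uniform bounds via $(F_3)$ and Gronwall, compactness via Lemma~\ref{uzwarcanie}, the deformation $h$ continuing along the flow of the locally Lipschitz tangent selection, and invoking Remark~\ref{slabo-usc}(c) for the weak graph closedness -- are all in the right spirit and match the paper, and a minor caveat is that the paper works with $\cl X_n$ (and checks nestedness of the $F_n$) rather than claiming $X_n$ itself is closed. But without the $I_n$-based construction the argument does not close.
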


\begin{proof}
\textbf{Step 1.} Take a sequence $\le \e_n \pr _{n\geq 1}$ in $(0,1)$ such that $\e_n \searrow 0$.
Since $\K \ni x \mm T_\K \le x \pr \subset \E$ is lsc we can apply Theorem \ref{nguyen}:
for every $n\geq 1$, there is $f_n : [0,1]\times \K \to \E$ such that
 $f_n\le t, \cdot \pr $ is continuous for $t\in [0,1]$ and $f_n \le \cdot ,x \pr $ is measurable for every $x\in \K$; $f_n \le t,x \pr \in T_\K \le x \pr $ and $f_n \le t,x \pr \in F \le \{t\} \times B_\K \le x , \e_n \pr \pr + B_\E \le 0,\e_n \pr $ for all $t\in [0,1], x\in \K$.\\
\indent For each $k\geq 1$, by a version of the Scorza-Dragoni theorem (cf. \cite{kucia}), there is a closed subset $\overline{I}_k\subset [0,1]$ such that
$\mu \le [0,1]\setminus \overline{I}_k \pr\leq\min\{\e_n/2^{k-n+1};n=1,...,k\}$ and the restriction $f_k\restr{\overline{I}_k \times \K}: \overline{I}_n\times \K \to \E$ is continuous (with respect to both variables).
Let $I_n := \bigcap_{k\geq n} \overline{I}_k$, $n\geq 1$. The family $\{I_n\}$  increases, consists of compact sets and $f_n\restr{I_n \times \K}$ is continuous. Moreover
$\mu \le [0,1] \setminus I_n \pr\leq \e_n$ and $\mu \le \bigcup_{n\geq 1} I_n \pr =1$.\\
\indent Fix $n\geq 1$; clearly $(0,1)\setminus I_n=\bigcup _{k\geq 1} \le a_k, b_k \pr $. Define $\widehat{f}_n:[0,1]\times \K \to \E$ by
\[
\widehat{f}_n \le t, x \pr := \begin{cases}f_n\le t,x\pr \qquad\qquad\qquad\qquad\qquad\quad\, \text{ for } t\in I_n, x\in \K\\
\frac{b_k-t}{b_k-a_k}f_n \le a_k, x \pr + \frac{t-a_k}{b_k-a_k}f_n\le b_k,x\pr \quad\text{ for } t \in [a_k,b_k], x\in \K.
\end{cases}
\]
Obviously $\widehat{f}_n$ is continuous and ($\conv$ stands for the convex hull)
\begin{equation}\label{p1}\widehat{f}_n\le t,x\pr \in \mathrm{conv}F \le B_{[0,1]}\le t,\e_n \pr \times B_\K\le x,\e_n\pr \pr +B_\E \le 0, \e_n \pr,\end{equation} for $t\in [0,1]$, $x\in \K$, since $b_k - a_k <\e_n$ for $k\geq 1$. If $t\in I_n$, $x\in \K$, then
\begin{equation}\label{p2}\widehat{f}_n \le t,x\pr = f_n \le t,x \pr \in F\le \{t\}\times B_\K\le x, \e_n \pr \pr + B_\E \le 0,\e_n\pr.\end{equation}
\indent  For any $n\geq 1$ we find easily a continuous $\alpha_n:[0,1]\times \K \to (0,\infty)$ such that if $g:[0,1]\times \K \to \E$ satisfies
\begin{equation}\label{p3}
g\le t,x\pr \in \widehat{f}_n \le B_{[0,1]} \le t,\alpha_n \le t,x\pr \pr \times B_\K \le x , \alpha_n \le t,x \pr \pr \pr + B_\E \le 0,\alpha_n \le t,x\pr \pr,
\end{equation}
then $g\le t,x\pr \in \widehat{f}_n\le t,x\pr + B _\E \le 0, \e_n \pr$ on $[0,1]\times\K$. Applying Lemma \ref{selekcja-styczna} to $\widehat{f}_n$, we get a locally Lipschitz  $g=g_n:[0,1]\times \K\to\E$ such that
\begin{equation}\label{p31}g_n \le t,x\pr \in T_\K\le x\pr\end{equation} and \eqref{p3} holds. Hence
\begin{equation}\label{p32}g_n \le t,x \pr \in \widehat{f}_n \le t,x \pr +B_\E \le 0,\e_n\pr.\end{equation}
In view of \eqref{p31}, $(A)$ and \cite[Th. 7.2.]{bothe}) the problem
$\dot{u} \le t\pr = A u \le t\pr + g_n \le t, u\le t\pr\pr$, $u\le 0 \pr = x_0$,
admits a unique solution (mild) $\overline{u}_n\in \mathcal{C}\le [0,1], \K\pr$.\\ \indent For any $n\geq 1$ let $X_n$ be the set of mild solutions (in $\K$) of the problem
\[
\begin{cases}
\dot{u}\le t\pr \in A u \le t\pr + F_n \le t, u\le t\pr \pr,\\
u\le 0 \pr =x_0 \in \K,
\end{cases}
\]where $F_n:[0,1] \times \K \m \E$ is given by
\[
F_n \le t,x\pr :=
\begin{cases}
F \le \{t\}\times B_\K \le x, \e_n \pr \pr + B_\E \le 0 ,2\e_n \pr \qquad \qquad\qquad \quad \text{ for } t\in I_n,x\in \K,\\
\mathrm{conv} F \le B_{[0,1]}\le t, \e_n\pr \times B_\K \le x,\e_n \pr \pr + B_\E \le 0,2\e_n\pr \quad\;\text{ for } t\in [0,1] \setminus I_n, x\in \K.
\end{cases}
\]
By \eqref{p32} and \eqref{p1}, \eqref{p2},
$g_n\le t,x\pr \in F_n \le t,x\pr $ on $[0,1]\times \K$. Therefore $X_n\neq\emptyset$ since  $\overline{u}_n\in X_n$. Clearly
\begin{equation}\label{p33}X_0\subset\bigcap_{n=1}^\infty X_n.\end{equation}
\indent \textbf{Step 2.} We shall see that given a sequence $(u_n)$, where $u_n\in X_n$ for $n\geq 1$, then (up to a subsequence) $u_n\to u_0\in X_0$. To this end observe that there is $ w_n \in L^1 \le [0,1], \E \pr $ such that $w_n \le t\pr \in F_n \le t, u_n \le t\pr \pr$ for a.e. $ t\in [0,1]$ and $u_n \le t \pr = S \le t \pr x + K_0(w_n)$ for $t\in [0,1]$.
The Gronwall inequality and $(F_3)$ imply that $\sup_{n\geq 1}\|u_n\| \leq C$ for some $C\geq 0$. Thus $\left \{ w_n\right \}_{n\geq 1}$ is integrably bounded by $c\le 1 + C \pr$ and, by Lemma \ref{uzwarcanie},  $\left \{ u_n \right \}_{n\geq 1}$ is relatively compact, i.e., (up to a subsequence) $ u_n \to u_0 \in \mathcal{C}\le [0,1], \K \pr$.\\
\indent Observe now that the set  $\left \{\chi_nw_n\right \}_{n\geq 1}$, where $\chi_n$ stands for the indicator of $I_n$,  is integrably bounded. Take $t\in \bigcup_{n\geq 1}I_n$, i.e., $t\in I_n$ for $n\geq N$ for some $N$. For such $n$
\begin{equation}\label{p4}
\chi_n \le t \pr w_n \le t \pr  = w_n\le t \pr \in F \le \{t \}\times B_\K \le u_n \le t \pr , \e_n \pr \pr + B_\E \le 0, 2\e_n \pr,
\end{equation}
i.e.,  $w_n \le t \pr \in F \le t,v_n \pr +b_n$, for some $v_n\in \K, b_n \in \E$ with $\|u_n \le t \pr - v_n \| <\e_n, \|b_n \|<2\e_n$. Hence
\[
\|u_0 \le t\pr - v_n \| \leq \|u_0(t) - u_n(t)\| + \| u_n \le t \pr - v_n\| \to 0 \quad \text{as } n \to \infty.
\]
Observe that
\[
\left \{ w_n \le t\pr \chi_n \le t\pr \right \}_{n\geq 1}\subset F \le \{t\} \times \left \{ v_n \right \}_{n\geq 1} \pr + \left \{b_n \right \}_{n\geq 1} \cup \left \{ 0 \right \},
\]
where $F \le \{t\}\times \left \{ v_n \right \}_{n\geq N}\pr $ is relatively weakly compact in view of Remark \ref{slabo-usc} (c). By the Diestel weak compactness criterion \cite[Cor. 2.6]{diest},  $\left \{\chi_n w_n\right \}_{n\geq 1}$ is relatively weakly compact in $L^1 \le [0,1] ,\E \pr $, i.e.,  up to a subsequence $\chi_n w_n\rightharpoonup w_0\in L^1 \le [0,1] ,\E\pr $ (weakly) and, hence,
 $\le K_0 \le w_n \chi_n\pr\pr _{n\geq 1} \rightharpoonup K_0 \le w_0\pr$ in $\mathcal{C} \le [0,1], \E\pr$.\\
\indent On the other hand $\left \| K_0 \le (1-\chi_n)w_n\pr \right \| \leq Rc\le 1+ M \pr\mu \le [0,1] \setminus I_n \pr \to 0$ as $n\to\infty$. Therefore
$$u_n-S(\cdot)x=K_0(\chi_nw_n)+K_0((1-\chi_n)w_n)\rightharpoonup K_0(w_0).$$
This shows $u_0 \le t \pr = S \le t\pr x+ \int _0^t S \le t-s \pr w_0 \le s \pr \mathrm{d}s$ for $t\in [0,1]$. In view of \eqref{p4} and the `convergence theorem'  \cite[Th. 3.2.6]{ekeland}, $w_0 \le t\pr\in F \le t,u_0 \le t \pr \pr $ for a.e. $t\in [0,1]$, i.e., $u_0\in X_0$.\\
\indent The assertion we have just proved together with \eqref{p33} implies that $X_0$ is compact, $\sup_{v\in X_n} d \le v ; X_0\pr\to 0$ and, hence, the measure of noncompactness $\beta(\cl X_n)\to 0$ and $X_0=\bigcap_{n=1}^\infty\cl X_n$.

\indent \textbf{Step 3.} Now we shall show that $\cl X_n$ is contractible. To see this fix $n\geq 1$ and recall  the above constructed locally Lipschitz   $g_n :[0,1] \times\K \to \E$ being tangent to $\K$ and having sublinear growth. Take $z\in [0,1]$ and $y\in \K$. The problem
\[
\begin{cases}
\dot{u}\le t\pr = A u\le t\pr + g_n \le t, u\le t\pr \pr , \\
u\le z \pr = y,
\end{cases}
\]
admits a unique solution $v\le \cdot ; z,y\pr : [z,1] \to \K$.
The strong continuity of $\left \{ S \le t\pr \right \}_{t\geq 0}$ along with local lipschitzeanity of $g_n$ imply that $v\le \cdot; z,y \pr $ depends continuously on $z\in [0,1]$ and $y\in \K$.
Precisely, given $\e>0$, $z_0\in [0,1]$ and $y_0 \in \K$ there is $\delta >0$ such that $\|v\le t; z_0 , y_0 \pr - v \le t; z ,y \pr \| < \e$ for all $t\in \left [ \max \{z_0,z \} ,1 \right ]$, if $|z-z_0| <\delta, \|y-y_0\| <\delta$.\\
\indent Let us consider the homotopy $h:\cl X_n \times [0,1] \to \mathcal{C}\le [0,1] ,\K \pr$ given by
\[
\left [h\le u, z\pr \right ] \le s \pr :=
\begin{cases}
u\le s \pr \qquad \qquad \qquad\;\text{ for } s\in [0,z];\\
v\le s ; z, u \le z \pr \pr \qquad\quad \text{ for } s\in [z,1]
\end{cases}
\]
where $u\in \mathrm{cl}X_n , z\in [0,1]$. It is easy to see that $h$ is well-defined, continuous (comp. \cite[Th. 5.1]{bothe-hab}) and $ h \le X_n \times [0,1] \pr\subset X_n$ since $g_n$ is the selection of $F_n$; thus
$h \le \mathrm{cl}X_n \times [0,1] \pr \subset \mathrm{cl}X_n $.
Furthermore $h\le  \cdot , 0 \pr = v \le \cdot ; 0,x \pr$ and $h \le \cdot ,1 \pr = \mathrm{id}_{\mathrm{cl}X_n}$ proving the contractibility of $\mathrm{cl}X_n$.
\end{proof}

\subsection{$c$ - admissible maps}  Recall (see \cite{Lacher} and \cite{Hyman}) that a compact metric space space $S$ is {\em cell-like} if  it can be represented as the intersection of a decreasing sequence of compact contractible spaces. The following conditions are equivalent (see e.g. \cite{Hyman}): $S$ is cell-like; $S$ has the shape of a point; $S$ is an $R_\delta$-set; $S$ has the {\em $UV^\infty$-property}, i.e., if $S$ is embedded into an ANR, then it is contractible in any of its neighborhoods.\\
\indent Let $X, Y$ be metric spaces; an usc map $\f\colon X \m Y$ is \emph{cell-like} if  $\f\le x \pr $, $x\in X$, is  cell-like. A map $\f:X \m Y$ is $c$-\emph{admissible} if there is a metric space $Z$, a cell-like map $\psi:X\m Z$ and a continuous $f:Z\to Y$ such that $\f=f\circ\psi$. Equivalently (see \cite[Section 3]{gab-krysz-ind}) $\f:X\m Y$ is $c$-admissible if it is {\em represented} by a {\em $c$-admissible pair} $\le p,q\pr$, i.e., $\f \le x \pr = q \le p^{-1} \le x \pr \pr $ for $x\in X$, where $X\xleftarrow{p} \Gamma \xrightarrow{q} Y$, $\Gamma$ is a metric space, $p,q$ are continuous and $p$ is a proper surjection with cell-like  $p^{-1}\le x\pr$, $x\in X$. Properties of a $c$-admissible $\f$ strongly depend on a decomposition $\f=f\circ\psi$ or a pair $(p,q)$ representing it. When studying $c$-admissible maps one has to take into account representing pairs (for a detailed discussion of $c$-admissible maps, related topics and some references -- see \cite{gab-krysz-ind}). In particular: if $\f:X\m Y$ is cell-like, then the {\em canonical} pair $(p_\f,q_\f)$, where the {\em graph} $\Gr(\f):=\{(x,y)\in X\times Y\mid y\in\f(x)\}$, $p_\f:\Gr(\f)\to X$ and $q_\f:\Gr(\f)\to Y$ are projections, is $c$-admissible and represents $\f$. If $X \subset \E$, then a $c$-admissible pair $\le p,q \pr$ is \emph{compact} if $\cl q\le p^{-1} \le B \pr \pr $ is compact for any bounded $B\subset X$; $\f :X \m Y$ is  {\em compact} if represented by a compact $c$-admissible pair.\\
\indent After \cite[Definition 3.5]{gab-krysz-ind} we say that
$c$-admissible pairs  $X\xleftarrow{p_k} \Gamma_k \xrightarrow{q_k} Y$, $k=0,1$, (and set-valued maps represented by them) are {\em $c$-homotopic}
(written $(p_0,q_0)\simeq (p_1,q_1)$) if there is a $c$-admissible
pair $X\times [0,1]\xleftarrow{p} \Gamma \xrightarrow{q} Y$
and continuous maps $j_k:\Gamma_k\to\Gamma$, $k=0,1$, such that the following
diagram
$$\begin{gathered}\label{hom}\xymatrix@R-9pt{X\ar[d]_-{i_0}&
\Gamma_0\ar[l]_-{p_0}\ar[d]_-{j_0}\ar[dr]^-{q_0}&\\
X\times [0,1]&\Gamma\ar[l]_-p\ar[r]^-q&Y,\\
X\ar[u]^-{i_1}&\Gamma_1\ar[l]^-{p_1}\ar[u]^-{j_1}\ar[ur]_-{q_1}&}\end{gathered}$$
where $i_k(x):=(x,k)$ for $x\in X$ and $k=0,1$, is commutative.
The pair $(p,q)$ is called a {\em $c$-homotopy} joining
$(p_0,q_0)$ to $(p_1,q_1)$.

\bigskip

\indent Let  $\Sigma : \K \m \mathcal{C}\le [0,1],\K\pr$ assign to $x\in \K$ the set of all solutions to \eqref{inkluzja-rozn} starting at $x$.
\begin{lem}\label{lem-gorna-polciaglosc}
$\Sigma$ is a cell-like map and maps bounded sets onto bounded ones.
\end{lem}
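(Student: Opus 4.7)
The plan is to verify three things: cell-likeness of the values $\Sigma(x)$, upper semicontinuity of $\Sigma$, and boundedness of $\Sigma(B)$ for bounded $B\subset\K$. The cell-likeness of each $\Sigma(x)$ is already a consequence of Theorem~\ref{tw.o-istnieniu}, which exhibits $\Sigma(x)$ as an $R_\delta$-subset of $\mathcal{C}([0,1],\K)$; every $R_\delta$-set is cell-like.

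For boundedness, fix a bounded $B\subset\K$ with $R:=\sup_{x\in B}\|x\|<\infty$; by $(A)$ also $M_0:=\sup_{t\in[0,1]}\|S(t)\|<\infty$. Any $u\in\Sigma(x)$, $x\in B$, has the representation $u(t)=S(t)x+\int_0^t S(t-s)w(s)\,ds$ for some $w\in L^1([0,1],\E)$ with $w(s)\in F(s,u(s))$ a.e., and by $(F_3)$, $\|w(s)\|\leq c(1+\|u(s)\|)$. The Gronwall inequality then bounds $\|u\|_\infty$ uniformly in terms of $R$, $c$, and $M_0$, so $\Sigma(B)$ is bounded.

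For upper semicontinuity it suffices, since $\mathcal{C}([0,1],\K)$ is metric, to establish the following sequential criterion: whenever $x_n\to x_0$ in $\K$ and $u_n\in\Sigma(x_n)$, a subsequence of $(u_n)$ converges in $\mathcal{C}([0,1],\K)$ to some $u_0\in\Sigma(x_0)$. Pick $w_n\in L^1([0,1],\E)$ with $w_n(s)\in F(s,u_n(s))$ a.e.\ and $u_n(t)=S(t)x_n+K_0(w_n)(t)$. The boundedness step, applied to $\{x_n\}$, shows $\{w_n\}$ is integrably bounded, so Lemma~\ref{uzwarcanie} furnishes a subsequence with $K_0(w_n)\to v$ in $\mathcal{C}([0,1],\E)$; as $\|S(t)x_n-S(t)x_0\|\leq M_0\|x_n-x_0\|\to 0$ uniformly in $t$, we get $u_n\to u_0:=S(\cdot)x_0+v$ in $\mathcal{C}([0,1],\E)$, with $u_0(t)\in\K$ by closedness of $\K$. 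Remark~\ref{slabo-usc}(c) ensures that $F(\{t\}\times\{u_n(t)\}_{n\geq 0})$ is relatively weakly compact for each $t$; together with the integrable bound this allows Diestel's criterion~\cite[Cor.~2.6]{diest} to extract a further subsequence along which $w_n\rightharpoonup w_0$ weakly in $L^1([0,1],\E)$. Since each evaluation $w\mapsto K_0(w)(t)$ is a bounded linear operator, $K_0(w_n)(t)\rightharpoonup K_0(w_0)(t)$; combined with the norm convergence to $v(t)$ this gives $v=K_0(w_0)$. The convergence theorem~\cite[Th.~3.2.6]{ekeland} together with the weak-closed-graph property from Remark~\ref{slabo-usc}(c) then yields $w_0(s)\in F(s,u_0(s))$ a.e., whence $u_0\in\Sigma(x_0)$.

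The main obstacle is the absence of norm-compactness in the values of $F$, which forces us to pass to a weak limit for the selections $w_n$ and then identify this weak limit as an admissible selection along $u_0$. This is handled by pairing Remark~\ref{slabo-usc}(c)---which elevates H-upper semicontinuity to a weak-closed-graph property---with Diestel's criterion; this is precisely the machinery of Step~2 in the proof of Theorem~\ref{tw.o-istnieniu}, so the present argument is essentially an adaptation of that step to honest solutions rather than approximations.
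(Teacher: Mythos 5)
Your proposal is correct and follows essentially the same route as the paper: cell-likeness of values from Theorem~\ref{tw.o-istnieniu}, boundedness from Gronwall and $(F_3)$, and upper semicontinuity by the sequential criterion with the Diestel weak-compactness argument to extract $w_0$ and the convergence theorem to show $w_0(s)\in F(s,u_0(s))$. The paper simply compresses the USC step with the phrase ``As above'' (pointing back to Step~2 of the proof of Theorem~\ref{tw.o-istnieniu}), whereas you spell out that machinery explicitly; you also correct the paper's typographical slip referring to a nonexistent $(F_5)$ where $(F_3)$ is meant.
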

\begin{proof} The second assertion follows from the Gronwall inequality and $\le F_5\pr$. In view of Theorem \ref{tw.o-istnieniu} we need to show that $\Sigma$ is usc. Let $x_n \to x \in \K$ and $u_n \in \Sigma \le x_n \pr $ for $n\geq 1$.
Then $u_n = S \le \cdot \pr x_n + K_0 \le w_n \pr $ for some $w_n \in L^1 \le [0,1], \E \pr $ such that $w_n\le t \pr \in F \le t, u_n \le t \pr \pr $ for a.e. $t\in [0,1]$.
The condition $\le F_3 \pr$ and the Gronwall inequality imply that $\left \{ u_n \right \}_{n\geq 1}$ is bounded, so $\left \{ w_n \right \}_{n\geq 1}$ is integrably bounded. As above (up to a subsequence) $u_n - S\le \cdot \pr x_n \to u - S \le \cdot \pr x$ in $\mathcal{C}\le [0,1],\K\pr$.
Thus, again up to a subsequence $w_n\rightharpoonup w\in L^1 \le [0,1], \E \pr $ and $w\le t\pr \in F \le t, u \le t \pr \pr $ for a.e. $t\in [0,1]$.
As a result, $u = S \le \cdot\pr x + K_0 \le w \pr \in \Sigma \le x\pr $.
\end{proof}
In what follows $\Sigma$ will be identified it with its canonical pair
\begin{equation}\label{op_rozw}
\K \xleftarrow{p}\Gamma \xrightarrow{q} \mathcal{C}\le [0,1] , \K \pr,\;\; \Sigma(x)=q_\Sigma(p_\Sigma^{-1}(x)),\;x\in \K,
\end{equation}
where $\Gamma := \left \{ \le x,u \pr \in \K \times \mathcal{C} \le [0,1], \K\pr \mid u \in \Sigma \le x \pr \right \}$ is the graph of $\Sigma$, $p_\Sigma$ and $q_\Sigma$ are the projections onto $\K$ and into $\mathcal{C}\le [0,1], \K\pr$, respectively.

For a fixed $t\in [0,1]$, the evaluation $e_t : \mathcal{C}\le [0,1],\K \pr \to \K$, $e_t \le u \pr: = u \le t\pr $ for $u\in \mathcal{C}\le [0,1], \K \pr$ is defined and continuous. With \eqref{inkluzja-rozn} we associate  the {\em Poincar\'{e} $t$-operator} $\Sigma_t :\K \m \K$,
\begin{equation}\label{op_poi}\Sigma_t := e_t \circ \Sigma,\;\;\hbox{i.e.},\;\Sigma _t \le x \pr = \left \{u \le t \pr \mid u \in \Sigma \le x\pr  \right \}.\end{equation}
Therefore $\Sigma_t$ is  c-admissible (cf. \cite[Rem. 3.4. (2)]{gab-krysz-ind}); it is represented by the c-admissible pair
\begin{equation}\label{c-adm}
\K\xleftarrow{p_t} \Gamma \xrightarrow{q_t} \K, \;\hbox{where}\;\; p_t:=p_\Sigma,\;q_t:=e_t\circ q_\Sigma.
\end{equation}

\begin{rem}\label{uwaga-c-admisible-solution-mapping}
(1) The mapping $\K\times [0,1] \ni \le t,x \pr \mm \Sigma_t \le x\pr \subset \mathcal{C}\le [0,1], \K\pr$ is c-admissible. Is it represented by the pair
\[
\K\times [0,1] \xleftarrow{p} \Gamma \times [0,1] \xrightarrow{q} \K
\]
where $p:=p_\Sigma \times \mathrm{id}_{[0,1]}, q\le \gamma ,t \pr := e_t \circ q_\Sigma \le \gamma \pr $ for $t\in [0,1], \gamma \in \Gamma $.\\
\indent (2) For any numbers $0<a \leq b \leq 1$, the restriction $[a,b]\times \K \ni \le t,x \pr \mm \Sigma_t \le x \pr \subset \K$ is completely continuous, what is a consequence of the compactness of the semigroup $\left \{ S \le t \pr \right \}_{t\geq 0}$.
It is worth to emphasize that, in particular, $\Sigma_t $ and the  pair $\le p_t , q_t \pr$ are completely continuous.
\end{rem}
\begin{rem}\label{uwaga-parametryczna-wersja} A parameterized version of the above results will also be useful.
Let $Z$ be a compact metric space and let $F:Z\times [0,1] \times \K \m \E$ be (product) measurable, $F\le \cdot, t , \cdot \pr$, $t\in [0,1]$ be H-usc and $F(z,\cdot,\cdot)$, $z\in Z$,  satisfy assumptions $\le F_1 \pr$ - $ \le F_4\pr$. Then all above results remain true, in particular: the solution map $\Sigma : Z\times \K \m \mathcal{C}\le [0,1] , \K\pr$ is usc with $R_\delta$-values.
\end{rem}
\subsection{Fixed point index for $c$ - admissible maps}\label{inde}
Given an open bounded $V\subset\E$, a compact $c$-admissible pair $\cl V\xleftarrow{p}\Gamma \xrightarrow{q}\E $ representing it, such that $x\notin q \le p ^{-1} \le x \pr \pr $ for $x\in\partial V$, the fixed point index $\mathrm{Ind}\le \le p,q\pr , V\pr$ is well-defined  (cf. \cite[Th. 4.5]{gab-krysz-ind}). This index has the usual properties such as: the existence, the localization, the additivity and the homotopy invariance (see \cite{gab-krysz-ind}).\\
\indent It is easy to get a generalization of the above mentioned fixed point index to a constrained case in a standard way. Let $\K\subset\E$ be convex closed and let $U\subset \K$ be (relatively) open and bounded. Let $r:\E \to \K$ be an arbitrary retraction and $j:\K \hookrightarrow \E$ be the inclusion. Given a $c$-admissible compact pair $\cl_\K U\xleftarrow{p}\Gamma \xrightarrow{q}\K$ such that $x\notin q \le p^{-1} \le x \pr \pr $ for $x\in \partial_\K U$ ($\cl_\K U$ and $\partial_\K U$ denote the closure and the boundary of $U$ in $\K$), we let $V:= r^{-1} \le U \pr\cap B$, where $B$ is open bounded and $B\supset U$, $\bar\Gamma:= \left \{ \le x, \gamma \pr \in \mathrm{cl}U \times \Gamma \mid r \le x \pr =p\le \gamma \pr \right \}$,  $\bar p:\bar\Gamma \to \cl U$ and $\bar q :\bar \Gamma\to\E$ by $\bar p\le x,\gamma \pr := x$ and $\bar q\le x,\gamma\pr:= q\le \gamma\pr $ for $\le x,\gamma\pr \in \bar\Gamma $. Note that $q_r \circ p_r^{-1} =  j\circ q\circ p^{-1} \circ r_U:\mathrm{cl}U_r\m E$,
the pair $\le p_r,q_r\pr$ is  compact and $c$-admissible and $x\not\in\bar q(\bar p^{-1}(x))$ for $x\in\partial V$.
Thus we are in a position to define the {\em constrained fixed point index} by
\[
\mathrm{Ind}_\K \le \le p,q\pr , U\pr := \mathrm{Ind}\le \le p_r, q_r \pr , \mathrm{cl}U_r\pr.
\]
It is easy to see that this definition is correct, i.e, it does not depend on the choice of $r$; furthermore $\mathrm{Ind}_\K$ has the same properties as $\mathrm{Ind}$ does.
\begin{rem}\label{uwaga-stopien-party-jednowartosciowej} (i) In particular, if two $c$-admissible pairs $\cl_\K U\xleftarrow{p_j}\Gamma_j \xrightarrow{q_j}\K$, $j=0,1$, are $c$-homotopic and the $c$-homotopy $\cl_\K U\times [0,1]\xleftarrow{p}\Gamma \xrightarrow{q}\K$ is compact and such that $x\not\in q(p^{-1}(x,t))$ for $x\in\partial_\K U$, $t\in [0,1]$, then $\mathrm{Ind}_\K((p_j,q_j),U)$, $j=0,1$, are defined and equal.\\
\indent (ii) If a compact $c$-admissible pair $\le p,q \pr $ represents a single-valued $f:\mathrm{cl}\,U\to \K$ and $x\neq f\le x \pr $ for $x\in \partial U$, then it can be proved $\mathrm{Ind}_\K \le \le p,q \pr , U \pr = \mathrm{Ind}_\K \le f , U \pr$, where $\mathrm{Ind}_\K(f,U)$ stands for the fixed point index as defined in \cite[\S 12]{dg}. In particular $f$ is represented by the pair $\cl_\K U \xleftarrow{\mathrm{id}}\cl_\K U\xrightarrow{f}\K$
\end{rem}

\section{The degree of the right hand side}
We will construct a homotopy invariant (the so-called {\em constrained topological degree}) responsible for the existence of zeros of maps of the form $A+G$, where:

$(G_1)$ \parbox[t]{0.78\textwidth}{ $G:\K\m \E$ is H-usc, has convex weakly compact values, maps bounded sets onto bounded ones and $G\le x\pr \cap T_\K\le x\pr\neq \emptyset$  for every $x\in \K$, i.e., $G$ is tangent to $\K$;}

$(G_2)$ \parbox[t]{0.78\textwidth}{$\K\subset\E$ is convex closed; $A:D \le A \pr \to \E$ satisfies  $\le A\pr$ and $\le K\pr$.}

\noindent Let $U\subset \K$ be  bounded and relatively open in $\K$.  We assume that
\begin{equation}\label{zal-brak-pkt-st-na-brzegu}
0\not\in A x + G \le x \pr \quad\text{for } x\in D\le A \pr \cap \partial U;
\end{equation}
here $\partial U = \partial _\K U$ stands for the boundary of $U$ in $\K$.
\begin{lem}\label{lem-do-konstr-stopnia}
There is $\alpha_0 >0$ such that if $0<\alpha\leq\alpha_0$, then
\[
0\notin A x +G \le B_\K \le x, \alpha \pr \pr + B_\E \le 0, \alpha \pr  \quad\text{for } x\in D \le A \pr \cap \partial U.
\]
\end{lem}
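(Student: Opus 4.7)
The plan is to argue by contradiction. Suppose the claim fails; then there exist $\alpha_n \searrow 0$, $x_n \in D(A) \cap \partial U$, $z_n \in B_\K(x_n, \alpha_n)$, $g_n \in G(z_n)$ and $b_n \in B_\E(0, \alpha_n)$ with
\[
Ax_n + g_n + b_n = 0, \qquad n \geq 1,
\]
and the aim is to extract a limit point $x_0 \in D(A)\cap\partial U$ satisfying $0\in Ax_0 + G(x_0)$, contradicting \eqref{zal-brak-pkt-st-na-brzegu}.

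First I would collect the boundedness estimates. Since $U$ is bounded, so are the sequences $\{x_n\}$ and $\{z_n\}$; assumption $(G_1)$ then yields that $\{g_n\}$ is bounded, and because $\|b_n\|<\alpha_n\to 0$, the identity $Ax_n=-g_n-b_n$ shows that $\{Ax_n\}$ is bounded as well.

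Next I would upgrade this to strong compactness of $\{x_n\}$ via the resolvent compactness in (A). Fix $h>0$ with $h\omega<1$; by Remark \ref{slabo-usc}\,(a) the resolvent $J_h=(I-hA)^{-1}$ is compact, so from the identity $x_n = J_h(x_n - hAx_n)$ together with the boundedness of $\{x_n-hAx_n\}$ one can extract a subsequence with $x_n\to x_0$. The closedness of $\partial U$ in $\K$ gives $x_0\in\partial U$, and the inequality $\|z_n-x_n\|<\alpha_n\to 0$ forces $z_n\to x_0$ as well.

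Finally I would close the argument by a weak-compactness step. By Remark \ref{slabo-usc}\,(c), applied to the time-independent $G$, the map $G$ is norm-to-weak upper semicontinuous with weakly sequentially closed graph, and the image $G(\{z_n\}_{n\geq 1}\cup\{x_0\})$ of the compact set $\{z_n\}\cup\{x_0\}$ is relatively weakly compact. A further subsequence therefore satisfies $g_n\rightharpoonup g_0\in G(x_0)$, and together with $b_n\to 0$ this gives $Ax_n=-g_n-b_n\rightharpoonup -g_0$, while $x_n\to x_0$ strongly. Since the closed linear operator $A$ has a graph that is norm-closed and convex, hence weakly closed in $\E\times\E$, we conclude $x_0\in D(A)$ and $Ax_0=-g_0$; thus $0\in Ax_0+G(x_0)$ with $x_0\in D(A)\cap\partial U$, contradicting \eqref{zal-brak-pkt-st-na-brzegu}. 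The delicate point is this last step: the values of $G$ are only weakly (not norm) compact, so no norm convergence of $\{g_n\}$ can be expected, and one must combine the weak upper semicontinuity supplied by Remark \ref{slabo-usc}\,(c) with the weak closedness of the graph of the unbounded operator $A$ to secure that the weak limit lies in $Ax_0+G(x_0)$.
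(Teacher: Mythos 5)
Your proposal is correct and follows essentially the same route as the paper: the identical contradiction setup, boundedness of $\{Ax_n\}$, strong compactness of $\{x_n\}$ via the identity $x_n=J_h(x_n-hAx_n)$ and compactness of $J_h$, and weak compactness of $\{g_n\}$ from Remark \ref{slabo-usc}\,(c). The only cosmetic difference is the final identification of the limit: the paper lets $J_h$ map the weakly convergent sequence $x_n+h(y_n+\xi_n)$ to a strongly convergent one and reads off $x_0=J_h(x_0+hy_0)$, while you invoke Mazur's theorem to get weak closedness of the graph of the closed operator $A$; both are legitimate and equivalent in substance.
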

\begin{proof}
Suppose to the contrary that for $n\geq 1$ there is $x_n \in D \le A \pr \cap \partial U$, $y_n\in G(\bar x_n)$, where $\|x_n-\bar x_n\|<1/n$ and $\xi_n\in\E$ with $\|\xi_n\|<1/n$  such that
\[
0 =A x_n + y_n + \xi_n \quad \Longleftrightarrow \quad x_n = J_h \le x_n +h \le y_n + \xi_n \pr \pr
\]
for fixed $h>0, h\omega <1$.
Clearly $\left \{ y_n \right \}_{n\geq 1}$ is bounded since so is $\{x_n\}$.
The compactness of $J_h$ implies that $\left \{ x_n \right \}_{n\geq 1}$ is relatively compact; thus, up to a subsequence,  $x_n \to x_0\in \partial U$ and $\bar x_n\to x_0$. Remark \ref{slabo-usc} (c) and the Krein-\v{S}mulian theorem imply that $\left \{ y_n\right \}_{n\geq 1}$ is relatively weakly compact.
Thus, up to a subsequence $y_n \rightharpoonup y_0$. This (see again  Remark \ref{slabo-usc} (c)) implies that $y_0\in G(x_0)$. Moreover $x_n=J_h \le x_n + h \le y_n + \xi_n \pr \pr \to J_h\le x_0 + h y_0\pr $, since $J_h$ is compact.
Hence $x_0 = J_h \le x_0 + hy_0\pr $, $x_0\in D\le A \pr$ and $0=A x_0 + y_0$: a contradiction.  \end{proof}
\begin{lem}\label{lem-drugie-podejscie-stycznosc}
If a continuous map $g:\K\to \E$ is tangent to $\K$, then for every $x\in \K$ we have
\[
\lim _{h\to 0^+, y\to x, y\in \K} \frac{d\le J_h \le y+hg\le y\pr \pr;\K\pr}{h}=0.
\]
\end{lem}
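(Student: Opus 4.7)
The strategy is to reduce the claim to one about the plain tangent cone distance $d(y+hg(y),\K)$ and then to get the joint limit via monotonicity in $h$ and continuity in $y$. First, I would exploit the semigroup invariance of $\K$. Since $J_h(\K)\subset\K$ whenever $h\omega<1$ (Remark \ref{slabo-usc}(b)), for every $\kappa\in\K$ and $z\in\E$,
\[
d(J_h z,\K)\leq\|J_h z-J_h\kappa\|\leq\|J_h\|\,\|z-\kappa\|,
\]
so $d(J_h z,\K)\leq\|J_h\|\,d(z,\K)$. Because $\|J_h\|\leq M/(1-h\omega)$ stays bounded as $h\to 0^+$, it suffices to show that
\[
\phi(y,h):=\frac{d(y+hg(y),\K)}{h}\longrightarrow 0\quad\text{as }(y,h)\to(x,0^+),\;y\in\K,\;h>0.
\]

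The crucial observation is that $h\mapsto\phi(y,h)$ is non-decreasing on $(0,\infty)$ for every fixed $y\in\K$; this is where convexity of $\K$ (combined with $y\in\K$) pays off. For $0<h_1<h_2$ and any $\kappa\in\K$, the point $(1-h_1/h_2)\,y+(h_1/h_2)\,\kappa$ lies in $\K$ by convexity, and the identity
\[
y+h_1 g(y)=(1-h_1/h_2)\,y+(h_1/h_2)(y+h_2 g(y))
\]
yields $d(y+h_1 g(y),\K)\leq(h_1/h_2)\,\|y+h_2 g(y)-\kappa\|$; passing to the infimum over $\kappa\in\K$ gives $\phi(y,h_1)\leq\phi(y,h_2)$.

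To finish, I would combine this monotonicity with two routine facts: (i) $\phi(\cdot,h)$ is continuous for each fixed $h>0$, by continuity of $g$ and $1$-Lipschitzness of $d(\cdot,\K)$; (ii) $\phi(x,h)\to 0$ as $h\to 0^+$, which is exactly $g(x)\in T_\K(x)$. Given $\varepsilon>0$, fix $h_0>0$ with $\phi(x,h_0)<\varepsilon/2$; by (i) there is $\delta>0$ with $\phi(y,h_0)<\varepsilon$ whenever $y\in\K$ and $\|y-x\|<\delta$; by monotonicity, $\phi(y,h)\leq\phi(y,h_0)<\varepsilon$ for every such $y$ and every $h\in(0,h_0]$, which is the required joint limit.

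The pitfall I expect to be the main source of technical trouble is a direct comparison with the tangency at $x$: estimates of the form $d(y+hg(y),\K)\leq d(x+hg(x),\K)+\|y-x\|+h\|g(y)-g(x)\|$ generate an uncontrollable term $\|y-x\|/h$ when one divides by $h$. Monotonicity of $\phi(y,\cdot)$ paired with continuity of $\phi(\cdot,h_0)$ at a single, fixed $h_0$ is precisely what bypasses this difficulty.
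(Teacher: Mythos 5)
Your proof is correct and follows the same overall reduction as the paper's: pass from the resolvent statement to the plain tangent-cone statement $\lim_{h\to 0^+,\,y\to x,\,y\in\K} d(y+hg(y);\K)/h=0$ using the semigroup invariance $J_h(\K)\subset\K$ together with the norm bound $\|J_h\|\leq M/(1-h\omega)$ (your inequality $d(J_h z;\K)\leq\|J_h\|\,d(z;\K)$ is a clean repackaging of the paper's step, where one picks $k\in\K$ close to $y+hg(y)$ and compares $J_h(y+hg(y))$ with $J_h(k)\in\K$).

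Where you diverge is in establishing that inner joint limit. The paper simply cites Aubin and Frankowska, Prop.\ 4.2.1, which already asserts exactly the joint limit for a continuous tangent selection. You instead give a self-contained argument: the monotonicity of $h\mapsto d(y+hg(y);\K)/h$ for $y\in\K$ (a consequence of convexity of $\K$ plus the identity $y+h_1g(y)=(1-h_1/h_2)y+(h_1/h_2)(y+h_2g(y))$) lets you control all small $h$ by a single $h_0$, and then the Lipschitz continuity of $d(\cdot;\K)$ and continuity of $g$ handle the dependence on $y$ at that fixed $h_0$. This is a nice, elementary substitute for the cited result; it is precisely the standard proof of the Nagumo-type uniform tangency statement, and it correctly sidesteps the naive estimate that would produce an uncontrollable $\|y-x\|/h$ term (a pitfall you rightly flag). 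Both arguments are sound; the paper's is shorter by reference, yours is more transparent.
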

\begin{proof}
Take $x\in \K$ and $\e>0$.
The continuity and the tangency of $g$ together with \cite[Prop. 4.2.1]{aubin} imply
\[
\lim _{h\to 0^+, y\to x, y\in \K} \frac{d\le  y+hg\le y\pr;\K\pr}{h}=0\quad\text{for }x\in \K.
\]
Hence (see Remark \ref{slabo-usc} (a)), there is $\delta>0$ such that if $\|y-x\|<\delta,\, 0<h<\delta$.
\[
d\le y + h g \le y \pr ; \K\pr <\frac{\e}{2M}h \quad\text{and}\quad \frac{M}{1-hw}< 2M.
\]
Choose $k\in \K$ with $\|y+hg\le y\pr - k\| < \e h\le 2M \pr ^{-1}$.
For $e:= \le k-y-hg\le y\pr\pr /h$,  $\|e\| <\e/2M$ and $y+ h \le g\le y\pr + e \pr = k \in \K$. Assumption $(K)$ implies $J_h \le y+ h \le g\le y\pr + e \pr\pr\in \K$. Thus
\[
d\le J_h \le y + h g\le y\pr \pr;\K \pr \leq \| J_h \le y + h g \le y\pr \pr -  J_h \le y+ h \le g\le y\pr + e \pr\pr\| \leq h\| J_h \| \|e\| <h\e
\]
if $\|y-x\|<\delta,\, 0<h<\delta$.
\end{proof}

Let $r:\E\to \K$ be a retraction, such that  $\|x-r(x)\|\leq 2d(x;\K)$ for $x\in\E$; such retractions exist.
\begin{lem}\label{lem-do-konstrukcji-stopnia-2}
Assume that $g:
K\to\E$ is continuous and tangent $\alpha$-approximation of $G$, i.e., $g\le x \pr \in G \le B_\K \le x ,\alpha \pr \pr + B_\E \le 0, \alpha \pr $ for $x\in \K$, where $0<\alpha\leq\alpha_0$ (see Lemma \ref{lem-do-konstr-stopnia}).
Then there is $h_0 >0$, $h_0 \omega <1$ such that for $h\in (0,h_0 ]$
\[
x\neq r\circ J_h \le x + h g \le x \pr \pr \quad \text{for }x\in \partial U.
\]
\end{lem}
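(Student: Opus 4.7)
My plan is to proceed by contradiction. Suppose no such $h_0$ exists; then there are sequences $h_n \searrow 0^+$ (with $h_n\omega < 1$ eventually) and $x_n \in \partial U$ satisfying $x_n = r \circ J_{h_n}(x_n + h_n g(x_n))$. Put $y_n := J_{h_n}(x_n + h_n g(x_n)) \in D(A)$; since $(I - h_n A)y_n = x_n + h_n g(x_n)$, the residual
\[
v_n := \frac{y_n - x_n}{h_n} = A y_n + g(x_n)
\]
measures the ``defect'' we wish to drive to zero. The goal is to extract a subsequential limit $x_0 \in D(A) \cap \partial U$ with $A x_0 + g(x_0) = 0$; since the $\alpha$-approximation hypothesis gives $g(x_0) \in G(B_\K(x_0,\alpha)) + B_\E(0,\alpha)$, this produces $0 \in A x_0 + G(B_\K(x_0,\alpha)) + B_\E(0,\alpha)$, contradicting Lemma \ref{lem-do-konstr-stopnia}.

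First I would establish an a priori bound on $\{v_n\}$ using only $(K)$ and the retraction estimate. Invariance $(K)$ gives $J_{h_n} x_n \in \K$, and linearity of $J_{h_n}$ writes $y_n - J_{h_n} x_n = h_n J_{h_n} g(x_n)$, so $d(y_n;\K) \leq h_n \|J_{h_n}\|\cdot\|g(x_n)\|$. Combining this with $\|y_n - x_n\| = \|y_n - r(y_n)\| \leq 2 d(y_n;\K)$ yields $\|v_n\| \leq 2M(1 - h_n\omega)^{-1}\|g(x_n)\|$; since the $\alpha$-approximation property together with $(G_1)$ makes $g$ bounded on the bounded set $\partial U$, the sequences $\{v_n\}$ and $\{A y_n\} = \{v_n - g(x_n)\}$ are uniformly bounded.

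This boundedness permits a subsequential extraction. Fixing any $h^* > 0$ with $h^*\omega < 1$, the identity $y_n = J_{h^*}(y_n - h^* A y_n)$ realizes each $y_n$ as $J_{h^*}$ applied to a bounded argument, and compactness of $J_{h^*}$ (Remark \ref{slabo-usc}(a)) then gives relative compactness of $\{y_n\}$. Passing to a subsequence, $y_n \to x_0$, and since $y_n - x_n = h_n v_n \to 0$ also $x_n \to x_0 \in \partial U$. Now I apply Lemma \ref{lem-drugie-podejscie-stycznosc} to the continuous tangent map $g$ at $x_0$ to sharpen the preliminary bound into $d(y_n;\K)/h_n \to 0$, hence $v_n \to 0$ in norm. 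Then $A y_n = v_n - g(x_n) \to -g(x_0)$, and closedness of the generator $A$ combined with $y_n \to x_0$ forces $x_0 \in D(A)$ and $A x_0 + g(x_0) = 0$, closing the contradiction.

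The principal obstacle is the compactness step: because the resolvents $J_{h_n}$ vary with $n$ and $h_n \to 0$, one cannot directly invoke compactness of any single resolvent applied to the sequence $\{x_n + h_n g(x_n)\}$. The key observation is that the preliminary bound on $\{A y_n\}$ permits the substitution $y_n = J_{h^*}(y_n - h^* A y_n)$, transferring all compactness onto a fixed operator $J_{h^*}$ and crucially relying on the compact $C_0$-semigroup hypothesis $(A)$. A secondary point is that the final step hinges on $g$ being genuinely tangent to $\K$ (i.e., $g(x) \in T_\K(x)$ for all $x \in \K$), so that Lemma \ref{lem-drugie-podejscie-stycznosc} actually applies; I read ``tangent $\alpha$-approximation'' as incorporating this tangency condition alongside the displayed approximation formula, which is exactly the combination produced by Lemma \ref{selekcja-styczna}.
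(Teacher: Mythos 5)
Your proof is correct and follows the same broad strategy as the paper: argue by contradiction, set $u_n := J_{h_n}(x_n + h_n g(x_n))$, obtain boundedness of $\{Au_n\}$, use compactness of a fixed resolvent $J_{h^*}$ via the identity $u_n = J_{h^*}(u_n - h^* Au_n)$ to extract a convergent subsequence $u_n \to x_0$ (hence $x_n \to x_0 \in \partial U$), invoke the tangency estimate (Lemma \ref{lem-drugie-podejscie-stycznosc}) to get $Au_n + g(x_n) \to 0$, and close with the closedness of $A$ against Lemma \ref{lem-do-konstr-stopnia}.

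Where you genuinely improve on the paper is the ordering. The paper asserts $h_n^{-1}d(u_n;\K)\to 0$ ``in view of Lemma \ref{lem-drugie-podejscie-stycznosc}'' \emph{before} knowing that $\{x_n\}$ converges. That lemma is a pointwise (in $x$) limit as $(h,y)\to (0^+,x)$; it does not by itself give the needed convergence along an arbitrary sequence $(h_n,x_n)$ with $x_n\in\partial U$ merely bounded, and the subsequent extraction of a convergent subsequence of $\{x_n\}$ in the paper relies on boundedness of $\{Au_n\}$, which there is derived from the very estimate in question — a potential circularity. You sidestep this by first producing an a priori bound: semigroup invariance $(K)$ gives $J_{h_n}x_n\in\K$, so $d(u_n;\K)\le \|u_n - J_{h_n}x_n\| = h_n\|J_{h_n}g(x_n)\|$, and combined with $\|u_n - r(u_n)\|\le 2d(u_n;\K)$ this bounds $\|Au_n + g(x_n)\| = h_n^{-1}\|u_n - x_n\|$ independently of any convergence of $x_n$. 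Only after extracting $x_n\to x_0$ do you apply Lemma \ref{lem-drugie-podejscie-stycznosc} to upgrade boundedness to $Au_n + g(x_n)\to 0$. This is a cleaner and logically tighter presentation of what the paper intends, and it buys you rigor at essentially no extra cost. Your reading of ``tangent $\alpha$-approximation'' as including $g(x)\in T_\K(x)$ is also the right one, and is indeed what Lemma \ref{selekcja-styczna} supplies.
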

\begin{proof} If not, then for each $n\geq 1$ there is $x_n\in \partial U$ such that $x_n = r \circ J_{h_n} \le x_n + h_n g\le x_n \pr \pr$, where $0<h_n<1/n$ and $h_n\omega<1$.
Denoting $u_n := J_{h_n} \le x_n + h_n g\le x_n \pr \pr \in D\le A\pr$ we have
$h_n^{-1}d(u_n;\K)\to 0$ in view of Lemma Lemma \ref{lem-drugie-podejscie-stycznosc} and
$$u_n-r(u_n)=u_n-x_n=h_n(Au_n+g(x_n)).$$
Hence
\begin{equation}\label{wzor-wewnatrz-lematu-do-konstr-stopnia}
 \left \| A u_n + g \le x_n \pr \right \| =\frac{1}{h_n}\| u_n - r\le u_n \pr \|\leq \frac{2}{h_n}  d(u_n;\K)\to 0.
\end{equation}
Thus $\left \{ A u_n \right \}_{n\geq 1}$ is bounded since so is $\left \{ g\le x_n \pr \right \}_{n\geq 1}$.
Note that $\|u_n \| \leq \|J_{h_n}\|\|x_n + h_n g \le x _n \pr \|\leq R$ for some $R>0$. Fix  $h>0$, $h\omega <1$. The compactness of $J_h$ and $u_n = J_h \le u_n - h Au_n \pr $ implies that, up to a subsequence, $u_n \to x_0 \in \E$. Since $d(u_n;\K)\to 0$, we infer that
$x_0\in \K$ and $x_n  = r\le u_n \pr \to r\le x_0 \pr =x_0\in \partial U$.
In view of \eqref{wzor-wewnatrz-lematu-do-konstr-stopnia} $A u_n \to - g \le x_0 \pr $ and since $A$ is closed we have $x_0 \in D \le A \pr$ and $Ax_0 = - g\le x_0 \pr$.
As a result $x_0 \in D \le A \pr \cap \partial U$ and $0 = Ax_0 + g \le x_0\pr$: a contradiction to Lemma \ref{lem-do-konstr-stopnia}.
\end{proof}

By Lemma \ref{selekcja-styczna} there is a locally Lipschitz  $g:\K\to \E$ tangent to $\K$ being an $\alpha$-approximation of $G$.
Let  $h \in (0,h_0]$ ($h_0$ is taken from Lemma \ref{lem-do-konstrukcji-stopnia-2}) and consider $f :\mathrm{cl}\,U\to \K$ defined by
\[
f \le x \pr := r\circ J_h \le x + h g\le x \pr \pr \quad\text{for } x\in \mathrm{cl}\,U.
\]
Obviously, $f$ is compact and by Lemma \ref{lem-do-konstrukcji-stopnia-2}, $x\neq f(x)$ for $x\in \partial U$. Thus, the fixed point index in ANRs $\mathrm{Ind}_\K\le f, U \pr $ is well-defined (see \cite[\S 12]{dg})

\begin{lem}\label{pop-def} The number $\mathrm{Ind}_\K\le f, U \pr $  does not depend on the choice of a sufficiently close approximation $g$, a retraction $r$ and sufficiently small $h>0$.\end{lem}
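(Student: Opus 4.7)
The plan is to connect any two admissible triples $(g_0,r_0,h_0)$ and $(g_1,r_1,h_1)$ by a single linear interpolation and to invoke the homotopy invariance of the fixed point index in the ANR $\K$. For $\lambda\in [0,1]$ set $g_\lambda:=(1-\lambda)g_0+\lambda g_1$, $r_\lambda:=(1-\lambda)r_0+\lambda r_1$, $h_\lambda:=(1-\lambda)h_0+\lambda h_1$, and consider $H\colon \cl U\times[0,1]\to\K$ given by $H(x,\lambda):=r_\lambda\circ J_{h_\lambda}(x+h_\lambda g_\lambda(x))$. Since $\K$ is convex, $r_\lambda$ is a retraction satisfying $\|x-r_\lambda(x)\|\leq 2d(x;\K)$; since $T_\K(x)$ is a convex cone and a convex combination of locally Lipschitz maps is locally Lipschitz, $g_\lambda$ is locally Lipschitz, tangent to $\K$, and obeys $g_\lambda(x)\in\conv G(B_\K(x,\alpha))+B_\E(0,\alpha)$. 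Continuity of the resolvent $h\mapsto J_h$ on $\{h>0:h\omega<1\}$ (cf.\ Remark \ref{slabo-usc}(a)) makes $H$ jointly continuous, and the compactness of each $J_{h_\lambda}$ makes $H$ a compact homotopy.

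The crucial step is to show that $x\neq H(x,\lambda)$ on $\partial_\K U$ for every $\lambda\in [0,1]$, assuming both $h_0,h_1$ lie in a suitably small interval $(0,h_*]$. The first obstacle is that $g_\lambda$ is not an $\alpha$-approximation in the sense of Lemma \ref{lem-do-konstr-stopnia}, but only a ``convex-hull'' $\alpha$-approximation. I would therefore begin by strengthening Lemma \ref{lem-do-konstr-stopnia} to: for $\alpha>0$ small enough one has $0\notin Ax+\conv G(B_\K(x,\alpha))+B_\E(0,\alpha)$ for all $x\in D(A)\cap\partial_\K U$. Its proof is the same contradiction argument, combined with the observation that a weak limit of points from $\conv G(B_\K(x_n,1/n))$ with $x_n\to x_0$ still lies in $G(x_0)$: this uses the H-upper semicontinuity and convex weak compactness of $G$ (Remark \ref{slabo-usc}(c)) together with the fact that $G(x_0)+B_\E(0,\eta)$ is convex and weakly closed for every $\eta>0$.

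Next I would prove the following uniform variant of Lemma \ref{lem-do-konstrukcji-stopnia-2}: there exists $h_*>0$ such that $x\neq r_\lambda\circ J_h(x+hg_\lambda(x))$ on $\partial_\K U$ for all $h\in(0,h_*]$ and $\lambda\in[0,1]$. Arguing by contradiction, one obtains sequences $h_n\downarrow 0$, $\lambda_n\to\lambda_0$ and $x_n\in\partial_\K U$ with $u_n:=J_{h_n}(x_n+h_n g_{\lambda_n}(x_n))$ satisfying $x_n=r_{\lambda_n}(u_n)$ and hence $\|u_n-x_n\|\leq 2d(u_n;\K)$. The key estimate $h_n^{-1}d(u_n;\K)\to 0$ is obtained by writing $g_{\lambda_n}(y)=g_{\lambda_0}(y)+(g_{\lambda_n}(y)-g_{\lambda_0}(y))$, applying Lemma \ref{lem-drugie-podejscie-stycznosc} to the tangent field $g_{\lambda_0}$, and absorbing the remainder using joint continuity of $(\lambda,y)\mapsto g_\lambda(y)$. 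Compactness of $J_h$ and closedness of $A$ then give $Ax_0+g_{\lambda_0}(x_0)=0$, which contradicts the strengthened Lemma \ref{lem-do-konstr-stopnia} since $g_{\lambda_0}(x_0)\in \conv G(B_\K(x_0,\alpha))+B_\E(0,\alpha)$.

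With the uniform no-fixed-point statement in place, the homotopy invariance of the fixed point index in ANRs (see \cite[\S 12]{dg}) yields $\mathrm{Ind}_\K(f_0,U)=\mathrm{Ind}_\K(f_1,U)$, which is the required independence. I expect the main obstacle to be the third step above: extracting enough uniformity in $\lambda$ to rerun the contradiction argument of Lemma \ref{lem-do-konstrukcji-stopnia-2}. The convex-hull strengthening of Lemma \ref{lem-do-konstr-stopnia} and the joint continuity estimate for the tangency of $g_\lambda$ are the two technical ingredients that carry the proof through.
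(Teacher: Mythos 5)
Your proposal is correct and follows the same basic idea as the paper for the dependence on $r$ and $g$: interpolate linearly and invoke homotopy invariance of the index. Where you genuinely diverge is in the treatment of $h$. The paper first fixes $h$, proves index-invariance under variation of $r$ and $g$ via the homotopy $(x,t)\mapsto r_t\circ J_h(x+hg_t(x))$, and then handles the $h$-dependence separately by a second homotopy built from the resolvent identity $J_b = J_a\left[\tfrac{a}{b}I + \tfrac{b-a}{b}J_b\right]$. You instead fold $h$ into the same linear homotopy $h_\lambda=(1-\lambda)h_0+\lambda h_1$, which works because $(0,h_*]$ is convex; the price is that you must reprove Lemma~\ref{lem-do-konstrukcji-stopnia-2} uniformly in the homotopy parameter, and you must verify compactness of $H$ by observing that $h\mapsto J_h$ is norm-continuous away from $0$ (another use of the resolvent identity in disguise), so that $\bigcup_{\lambda} J_{h_\lambda}(B)$ is relatively compact for bounded $B$ — a point your plan asserts but does not spell out. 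You are also more careful than the paper on a real subtlety both proofs face: the convex combination $g_\lambda$ is only a \emph{convex-hull} $\alpha$-approximation of $G$, so Lemma~\ref{lem-do-konstr-stopnia} must be strengthened to exclude $0\in Ax+\conv G(B_\K(x,\alpha))+B_\E(0,\alpha)$; your weak-compactness argument for that strengthening (passing to $G(x_0)+\overline{B}_\E(0,\eta)$, which is convex and weakly closed since $G(x_0)$ is convex weakly compact, then intersecting over $\eta$) is sound. The paper's version avoids the uniform lemma in $\lambda$ by the algebraic resolvent trick; yours avoids the resolvent trick at the cost of more uniformity estimates. Both routes reach the conclusion.
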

\begin{proof} Take two retraction $r_0,r_1:\E\to\K$ such that $\|x-r_i(x)\|\leq 2d(x;\K)$, $x\in\E$, $i=0,1$, and two locally Lipschitz $\alpha$-approximations $g_0, g_1:\K \to \E$ of $G$ tangent to $\K$, where $0<\alpha\leq\alpha_0$. If Repeating arguments form Lemmata \ref{lem-do-konstr-stopnia} and \ref{lem-do-konstrukcji-stopnia-2} we find a sufficiently small $\alpha>0$ and $h\leq h_0$ such that for any $t\in [0,1]$
$$x\neq r_t\circ
f_t(x):=J_h(x+hg_t(x))\;\;\hbox{on}\;\; \partial U,$$
where $r_t:=(1-t)r_0+tr_1$ and $g_t=(1-t)g_0+tg_1$. Thus $\partial U\times [0,1]\ni (x,t)\mapsto f_t(x)$ provides a (compact) homotopy joining $f_0$ to $f_1$ showing that $\mathrm{Ind}_\K\le f_0, U \pr=\mathrm{Ind}_\K\le f_1, U \pr $.  The independence of $\mathrm{Ind}_\K\le f, U \pr $ follows easily from the resolvent identity
$$J_b = J_a \left [ \frac{a}{b}I + \frac{b-a}{b}J_b\right],$$
being valid for any $a,b>0$ with $a\omega, b\omega <1$ and again the homotopy invariance of the fixed point index.\end{proof}
Thus, we are in a position to define the degree $\mathrm{deg}_\K$ by
\begin{equation}\label{definicja-stopnia}
\mathrm{deg}_\K \le A + G , U\pr := \lim_{h\to 0^+}\mathrm{Ind}_\K \le r\circ J_h \le I + hg \pr,U\pr
\end{equation}
where $g:\K\to \E$ is a tangent and sufficiently close locally Lipschitz approximation of $G$.
\begin{prop}
The degree $\mathrm{deg}_\K $ has the following basic properties:\\
(1) (Existence) If $\mathrm{deg}_\K \le A + G , U \pr \neq 0$, then there is $x\in D \le A \pr \cap  U$ such that $0\in A x + G\le x \pr $;\\
(2) (Additivity) If $U_1,U_2 \subset U$ are disjoint open in $\K$ and
$0\not\in Ax=G(x)$ for $x\in D(A)\cap [\cl U\setminus (U_1\cup U_2)]$,
then
\[
\mathrm{deg}_\K \le A +G , U \pr = \mathrm{deg}_\K \le A +G , U_1 \pr + \mathrm{deg}_\K \le A +G, U_2 \pr .
\]
(3) (Homotopy invariance) If $H:[0,1]\times \mathrm{cl}\,U \m \E $ is H-usc with convex weakly compact values, maps bounded sets onto bounded ones and is tangent to $\K$, i.e., $H(t,x)\cap T_\K(x)\neq\emptyset$, $t\in [0,1]$, $x\in\partial U$, and such that $0\notin A x + H \le t,x\pr $ for $t\in [0,1], x\in \partial U$, then
\[
\mathrm{deg}_\K \le A + H \le 0,\cdot \pr , U\pr = \mathrm{deg}_\K \le H \le 1, \cdot \pr, U \pr.
\]
\end{prop}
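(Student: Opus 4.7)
My plan is to derive all three properties from the corresponding properties of the fixed point index $\mathrm{Ind}_\K$ in ANRs, applied to the approximating map $f_h := r\circ J_h(I+hg)$, once the appropriate fixed-point-free conditions are verified on the relevant subsets, and then to pass to the limit $h\to 0^+$ via \eqref{definicja-stopnia} and Lemma~\ref{pop-def}.

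For (1), I would argue by contraposition. Assume $0\notin Ax+G(x)$ for every $x\in D(A)\cap U$; combined with \eqref{zal-brak-pkt-st-na-brzegu} this upgrades to $0\notin Ax+G(x)$ on the whole closed set $D(A)\cap\cl U$. A verbatim repetition of the compactness argument of Lemma~\ref{lem-do-konstr-stopnia} with $\cl U$ in place of $\partial U$ produces $\alpha_0>0$ with $0\notin Ax+G(B_\K(x,\alpha_0))+B_\E(0,\alpha_0)$ on $D(A)\cap\cl U$, and a verbatim extension of Lemma~\ref{lem-do-konstrukcji-stopnia-2} then shows that for all sufficiently small $h>0$ the approximation $f_h$ has no fixed point on the whole of $\cl U$. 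The existence property of $\mathrm{Ind}_\K$ in ANRs forces $\mathrm{Ind}_\K(f_h,U)=0$, whence $\mathrm{deg}_\K(A+G,U)=0$.

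For (2), the assumption $0\notin Ax+G(x)$ on $D(A)\cap[\cl U\setminus(U_1\cup U_2)]$ feeds into exactly the same mechanism (with this compact set replacing $\partial U$), providing a single locally Lipschitz tangent approximation $g$ and an $h_0>0$ such that $f_h$ is fixed-point-free on $\cl U\setminus(U_1\cup U_2)$ for every $h\in(0,h_0]$. In particular the boundary conditions for the three indices $\mathrm{Ind}_\K(f_h,U)$, $\mathrm{Ind}_\K(f_h,U_1)$, $\mathrm{Ind}_\K(f_h,U_2)$ are simultaneously satisfied for the same $g$ and $h$, so the additivity of the fixed point index in ANRs gives their additive relation; letting $h\to 0^+$ transports it to the degree.

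For (3), which is the most delicate point, I would construct a single locally Lipschitz family $g:[0,1]\times\K\to\E$ with $g(t,x)\in T_\K(x)$ that is simultaneously a tangent $\alpha$-approximation of $H(t,\cdot)$ for every $t$. A parametric version of Lemma~\ref{lem-do-konstr-stopnia}, run on the compact set $[0,1]\times(D(A)\cap\partial U)$, yields $\alpha_0>0$ with $0\notin Ax+H(B_{[0,1]}(t,\alpha_0)\times B_\K(x,\alpha_0))+B_\E(0,\alpha_0)$ for all such $(t,x)$. Since Lemma~\ref{selekcja-styczna} is already formulated for set-valued maps on a product $[0,1]\times\K$, I would apply it to $H$ (first extended from $[0,1]\times\cl U$ to $[0,1]\times\K$ by a Dugundji-type construction preserving H-uscness, convex weakly compact values and tangency to $\K$) to obtain the required $g$. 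A parametric version of Lemma~\ref{lem-do-konstrukcji-stopnia-2} (the same contradiction argument, now extracting a convergent subsequence of the parameters $t_n\in[0,1]$ by compactness) then yields a uniform $h_0>0$ such that $F(t,x):=r\circ J_h(x+hg(t,x))$ has no fixed point on $\partial U$ for any $t\in[0,1]$ and $h\in(0,h_0]$. The map $F:[0,1]\times\cl U\to\K$ is continuous and compact (since $J_h$ is compact), so the homotopy invariance of $\mathrm{Ind}_\K$ in ANRs gives $\mathrm{Ind}_\K(F(0,\cdot),U)=\mathrm{Ind}_\K(F(1,\cdot),U)$, and Lemma~\ref{pop-def} identifies each side in the limit $h\to 0^+$ with $\mathrm{deg}_\K(A+H(i,\cdot),U)$ for $i=0,1$. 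The main obstacle is arranging this parametric construction so that tangency, the $\alpha$-approximation property and the uniform smallness of $h_0$ hold \emph{jointly} in $t$; the tangency-preserving extension of $H$ from $[0,1]\times\cl U$ to $[0,1]\times\K$ is the step requiring the most care.
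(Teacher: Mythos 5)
Your proof of (1) matches the paper's argument exactly (contraposition, upgrading the non-vanishing to $D(A)\cap\cl_\K U$, then extracting a uniformly fixed-point-free approximation via the arguments of Lemmata~\ref{lem-do-konstr-stopnia} and~\ref{lem-do-konstrukcji-stopnia-2}); the paper declares (2) and (3) ``standard and left to the reader,'' and what you supply is precisely that standard argument, reducing everything to the corresponding properties of $\mathrm{Ind}_\K$ for the approximating maps $r\circ J_h(I+hg)$. The extension subtlety you flag in (3) --- that $H$ is posited only on $[0,1]\times\cl U$ whereas Lemma~\ref{selekcja-styczna} and the degree construction require data on all of $\K$ --- is a fair observation about the paper's formulation rather than a gap in your proof, and your proposed resolution (tangency-preserving extension, then a parametric run of the two key lemmata) is the natural one.
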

\begin{proof}
Suppose $0\not\in Ax+G(x)$ for $x\in\cl U\cap D(A)$. Arguing as in Lemmata \ref{lem-do-konstr-stopnia} and \ref{lem-do-konstrukcji-stopnia-2} we find $0<\alpha_1\leq\alpha_0$ and $0<h_1\leq h_0$ such that for any $0<\alpha\leq\alpha_1$ and any locally Lipschitz and tangent $\alpha$-approximation $g:\K\to\E$, $x\neq r\circ J_k(x+hg(x))$ for $x\in\cl U$, where $0<h\leq h_1$. This shows that $\mathrm{deg}_\K \le A + G , U\pr=0$.\\
\indent The remaining assertions are standard and left to the reader.
\end{proof}

\section{The Krasnosel'skii type formula}

In this section we will  prove the following counterpart of the classical Krasnoselskii formula by establishing a formula relating the constrained degree of the operator $A+F(0,\cdot)$ in the right-hand side of \eqref{inkluzja-rozn} and the fixed point index of the Poincar\'e operator $\Sigma_t$ (with sufficiently small $t>0$) associated to \eqref{inkluzja-rozn}; see \eqref{op_poi}, \eqref{c-adm}.
\begin{thm}\label{glowne}
Assume that operator $A:D\le A \pr \to \E$, where $\E$ is a separable Hilbert space, and $\K$ satisfy hypotheses $\le A \pr$, $\le K \pr $ and, additionally let $\| S \le t \pr \|\leq e^{\omega t} $ for some $\omega \in \R$ and all $t\geq 0$. Let $F :[0,1]\times \K \m \E$ satisfy conditions $\le F_1 \pr $, $(F_3)$ an d$(F_4)$  and, instead of $(F_2)$, we assume that
$$F:[0,1] \times \K \m \E\;\;\hbox{is H-usc}.\leqno (F)$$
Let $U \subset \K$ be bounded relatively open in $\K$ and
$0\notin Ax + F \le 0 , x \pr$ for $x\in \partial U \cap D \le A\pr$.
There is $t_0\in (0,1]$ such that if $t\in (0,t_0]$, then
$\mathrm{Ind}_\K \le  \le p_t ,q_t \pr , U \pr$ is well-defined and equal to $\mathrm{deg}_\K \le A + F \le 0, \cdot \pr , U \pr$.
\end{thm}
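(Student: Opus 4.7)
The plan is to chain three homotopies that connect the $c$-admissible pair $(p_t,q_t)$ of the Poincar\'e operator to the map $r\circ J_h(I+hg)$ used in \eqref{definicja-stopnia}.

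\textbf{Step 1 (Admissibility for small $t$).} I would first show that there is $t_0\in(0,1]$ with $x\notin\Sigma_t(x)$ for every $t\in(0,t_0]$ and $x\in\partial U$. If not, pick $t_n\searrow 0$, $x_n\in\partial U$ and $u_n\in\Sigma(x_n)$ with $u_n(t_n)=u_n(0)=x_n$, so $u_n=S(\cdot)x_n+K_0(w_n)$ with $w_n(s)\in F(s,u_n(s))$ a.e.; $(F_3)$ and Gronwall bound $\{u_n\}$, so $\{w_n\}$ is integrably bounded and by Lemma \ref{uzwarcanie} a subsequence satisfies $u_n\to u_0$ uniformly, in particular $x_n\to x_0\in\partial U$. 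Using $u_n(t_n)=u_n(0)$ and the Pazy identity $(S(t_n)-I)x_n=A\int_0^{t_n}S(s)x_n\,ds$,
\begin{equation*}
0=A\!\left(\tfrac{1}{t_n}\!\int_0^{t_n}\!S(s)x_n\,ds\right)+\tfrac{1}{t_n}\!\int_0^{t_n}\!S(t_n-s)w_n(s)\,ds.
\end{equation*}
Changing variables $s=t_n\tau$ in the second term and invoking Remark \ref{slabo-usc}(c), the Diestel criterion, H-upper semicontinuity of $F$ and the convergence theorem \cite[Th.~3.2.6]{ekeland} (as in the proof of Theorem \ref{tw.o-istnieniu}) produces a weak limit $y_0\in F(0,x_0)$. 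Strong convergence of the first argument of $A$ to $x_0$ and closedness of $A$ yield $x_0\in D(A)$ and $0\in Ax_0+F(0,x_0)$, contradicting the hypothesis.

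\textbf{Step 2 (Deautonomisation and single-valued reduction).} Applying Remark \ref{uwaga-parametryczna-wersja} with $Z=[0,1]$ and $\widetilde F(\lambda,s,x):=F(\lambda s,x)$ furnishes a compact $c$-admissible homotopy joining $(p_t,q_t)$ (at $\lambda=1$) to the pair $(p_t^{\mathrm{aut}},q_t^{\mathrm{aut}})$ generated by the autonomous inclusion $\dot u\in Au+F(0,u)$ (at $\lambda=0$). I would then pick a locally Lipschitz tangent $\alpha$-approximation $g:\K\to\E$ of $F(0,\cdot)$ by Lemma \ref{selekcja-styczna}, with $\alpha\leq\alpha_0$ from Lemma \ref{lem-do-konstr-stopnia}, and use Remark \ref{uwaga-parametryczna-wersja} again with the convex-valued H-usc tangent homotopy $G_\mu(x):=(1-\mu)F(0,x)+\mu\{g(x)\}$, $\mu\in[0,1]$. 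Repeating the contradiction argument of Step~1 along these two homotopies (shrinking $t_0$ and $\alpha$ if necessary) rules out boundary fixed points, so by homotopy invariance
\begin{equation*}
\mathrm{Ind}_\K((p_t,q_t),U)=\mathrm{Ind}_\K(P_t^g,U),
\end{equation*}
where $P_t^g(x):=v(t;x)$ and $v(\cdot;x)\in\mathcal{C}([0,1],\K)$ is the unique mild solution of $\dot v=Av+g(v)$, $v(0)=x$, (existence from $(A)$, $(K)$ and local Lipschitzness of $g$, cf.\ \cite[Th.~7.2]{bothe}).

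\textbf{Step 3 (Flow versus resolvent).} I would take $h=t$ and consider the straight-line homotopy
\begin{equation*}
\Phi_\lambda(x):=r\bigl((1-\lambda)J_h(x+hg(x))+\lambda\,v(h;x)\bigr),\quad\lambda\in[0,1],\ x\in\mathrm{cl}\,U,
\end{equation*}
whose endpoints are $r\circ J_h(I+hg)$ and $P_h^g$ (the latter since $v(h;x)\in\K$ by the tangency of $g$). Both are compact. To rule out fixed points on $\partial U$ for small $h$, suppose $x_n=\Phi_{\lambda_n}(x_n)$, $x_n\in\partial U$, $h_n\to 0^+$. Set $y_n:=J_{h_n}(x_n+h_ng(x_n))$; compactness of $J_{h_n}$ passes to a subsequence with $y_n\to x_0$ and $x_n\to x_0\in\partial U$. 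Combining $y_n-x_n=h_n(Ay_n+g(x_n))$, the mild formula $v(h_n;x_n)-x_n=A\!\int_0^{h_n}\!S(s)x_n\,ds+\int_0^{h_n}\!S(h_n-s)g(v(s;x_n))\,ds$, and the retraction estimate $\|r(z_n)-z_n\|\leq 2d(z_n;\K)=o(h_n)$ coming from Lemma \ref{lem-drugie-podejscie-stycznosc} (since the convex combination of $y_n$ and $v(h_n;x_n)\in\K$ has $d(\cdot;\K)\leq(1-\lambda_n)d(y_n;\K)=o(h_n)$), one divides the fixed-point identity by $h_n$ and, by the Pazy-type argument of Step~1 together with closedness of $A$, extracts $x_0\in D(A)$ and $0=Ax_0+g(x_0)$. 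Because $g(x_0)\in F(0,B_\K(x_0,\alpha))+B_\E(0,\alpha)$, this contradicts Lemma \ref{lem-do-konstr-stopnia}. Therefore $\mathrm{Ind}_\K(P_h^g,U)=\mathrm{Ind}_\K(r\circ J_h(I+hg),U)$, which equals $\mathrm{deg}_\K(A+F(0,\cdot),U)$ by \eqref{definicja-stopnia}.

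The principal obstacle is Step~3: the exponential flow $v(h;\cdot)$ and the implicit Euler step $J_h(\cdot+hg(\cdot))$ are dynamically of different type, yet both must be shown to deviate from the identity in the same tangent direction $h(Ax+g(x))+o(h)$ near $\partial U\cap D(A)$, and the retraction $r$ must not corrupt this order. The key ingredients making this work are the compactness of $J_h$ (for strong limits of $y_n$ and of $Ay_n$), the Pazy identity (to handle the semigroup difference quotient without a priori regularity of $x_0$), and Lemma \ref{lem-drugie-podejscie-stycznosc} (ensuring that the retraction correction is negligible).
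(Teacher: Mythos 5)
Your proposal is correct, and it takes a genuinely different route from the paper's in two of the three steps. The overall architecture — chain $(p_t,q_t)$ through homotopies to the Poincar\'e operator of a tangent locally Lipschitz selection $g$, and then to $r\circ J_h(I+hg)$, invoking the definition \eqref{definicja-stopnia} — matches the paper's. The differences lie in the specific homotopies and the bookkeeping that supports them.

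In your Steps~1--2 you attack the time-dependent inclusion directly: the contradiction argument works on $[0,t_n]$ with $t_n\searrow 0$, using the Pazy identity $(S(t_n)-I)x_n=A\int_0^{t_n}S(s)x_n\,ds$, dividing by $t_n$, passing to weak limits, and exploiting (in Hilbert space) the strong continuity of the dual semigroup to show $\frac{1}{t_n}\int_0^{t_n}(S(t_n-s)-I)w_n(s)\,ds\rightharpoonup 0$. You then deautonomise via $\widetilde F(\lambda,s,x)=F(\lambda s,x)$ and pass to the selection $g$ via the segment homotopy $G_\mu$. The paper instead builds the autonomous envelope $\widehat F(t,x)=\overline{\mathrm{conv}}\,F([0,t],x)$, shows it retains $(F_1)$, $(F_3)$, $(F_4)$ and is H-usc, and proves the key fixed-point exclusion (Lemma~\ref{lem-nie-istnienie-rozwiazan-z-pktami-stalymi}) for the single autonomous homotopy $G(z,x)=(1-z)f(x)+z\widehat F(T,x)$ by a periodic-extension argument followed by differentiation at a Lebesgue point $\xi$; time-dependence is then subsumed because $F(t,\cdot)\subset\widehat F(T,\cdot)$ for $t\in[0,T]$. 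Your version avoids introducing $\widehat F$ and the periodic extension, at the cost of verifying boundary-fixed-point exclusion along two separate parametrized families rather than one; the envelope $\widehat F$ is what lets the paper package everything into a single Lemma~\ref{lem-istnienie-alfy-T}.

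Your Step~3 is the most substantial departure: you propose the direct straight-line homotopy $\Phi_\lambda=r\bigl((1-\lambda)J_h(\cdot+hg(\cdot))+\lambda\,v(h;\cdot)\bigr)$, and the boundary exclusion argument — using $y_n-x_n=h_n(Ay_n+g(x_n))$, the Pazy identity for $v(h_n;x_n)-x_n$, convexity of $d(\cdot;\K)$ and Lemma~\ref{lem-drugie-podejscie-stycznosc} to get $\|r(z_n)-z_n\|=o(h_n)$, and then closedness of $A$ applied to $(1-\lambda_n)y_n+\lambda_n\frac{1}{h_n}\int_0^{h_n}S(s)x_n\,ds$ — is sound. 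The paper instead passes through the intermediate ODE $\dot u=-u+g(u)$ via the interpolating family $A_z=(z-1-\tfrac{z}{h})I+zA$, $f_z$ (Claim~1), and then from its Poincar\'e operator to $g$ via a second rescaling homotopy $\widehat\Psi_t$ (Claim~2), relying on \cite[Th.~4.5, Prop.~4.3]{cw} and on the condensing-to-compact replacement Lemma~\ref{lem-zwarta-zamiast-kondensujacej}. Your direct homotopy is shorter, more self-contained, and avoids the condensing machinery entirely because both endpoints and the segment between them are already compact. One small caveat worth spelling out in a final write-up: the contradiction in Step~2 along the $G_\mu$ family leads to $0\in Ax_0+(1-\mu_0)F(0,x_0)+\mu_0 g(x_0)$ with $g(x_0)\in F(0,B_\K(x_0,\alpha))+B_\E(0,\alpha)$; to reach the contradiction with the hypothesis one needs a version of Lemma~\ref{lem-do-konstr-stopnia} accommodating the convex hull $\mathrm{conv}\,F(0,B_\K(x,\alpha))$, which follows by H-upper semicontinuity of $F(0,\cdot)$ and convexity of its values after a further shrinking of $\alpha$, but it is not literally the statement of that lemma.
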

\noindent  Observe that $F(0,\cdot)$ satisfies $(G_1)$ and $(G_2)$; hence $\mathrm{deg}_\K \le A + F \le 0, \cdot \pr , U \pr$ is well-defined

The proof of Theorem \ref{glowne} will be presented in a series of steps and auxiliary lemmata.

{\bf Step 1.}   Define $\widehat{F}:[0,1]\times \K \m \E$ by the formula
\[
\widehat{F} \le t,x \pr := \overline{\mathrm{conv}}\left [ F \le [0,t],x\pr\right ] \text{ for } t\in  [0,1], x\in \K\; (\footnote{Here $F([0,t],x):=F([0,t]\times\{x\})$.}).
\]
\begin{lem}
$\widehat F$ has convex weakly compact values, is H-usc, has sublinear growth and is tangent to $\K$.
\end{lem}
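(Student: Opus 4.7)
The plan is to verify the four claimed properties in turn, arguing from easiest to hardest.

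First I dispose of tangency and sublinear growth. Tangency is immediate from the containment $F(0,x) \subset F([0,t],x) \subset \widehat{F}(t,x)$: any $v \in F(0,x) \cap T_\K(x)$ supplied by $(F_4)$ also lies in $\widehat{F}(t,x) \cap T_\K(x)$. Sublinear growth follows because the bound $\|y\| \leq c(1+\|x\|)$ from $(F_3)$ cuts out a closed convex set (a ball centered at $0$), so it is preserved under both convex hulls and norm closures, hence holds on all of $\widehat{F}(t,x)$.

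For the weak compactness of the values, by the Krein--\v{S}mulian theorem it suffices to show that $F([0,t],x) = \bigcup_{s \in [0,t]} F(s,x)$ is weakly relatively compact. By the Eberlein--\v{S}mulian theorem I need only extract a weakly convergent subsequence from any sequence $y_n \in F(s_n,x)$ with $s_n \in [0,t]$. After passing to a subsequence, $s_n \to s_\ast \in [0,t]$; the H-upper semicontinuity hypothesis $(F)$ gives $d(y_n, F(s_\ast, x)) \to 0$, so I may write $y_n = z_n + e_n$ with $z_n \in F(s_\ast,x)$ and $\|e_n\| \to 0$. Since $F(s_\ast, x)$ is weakly compact by $(F_1)$, a further subsequence yields $z_n \rightharpoonup z_\ast$, and then $y_n \rightharpoonup z_\ast$.

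The main step, and the main obstacle, is H-upper semicontinuity of $\widehat{F}$ at an arbitrary $(t_0, x_0) \in [0,1] \times \K$. Fix $\varepsilon > 0$; the difficulty is to obtain a single neighborhood of $(t_0, x_0)$ controlling $F(s,x)$ uniformly in $s$. For each $s \in [0,t_0]$, joint H-upper semicontinuity of $F$ yields $\eta_s > 0$ with $F(s', x') \subset F(s, x_0) + B_\E(0, \varepsilon/4)$ whenever $|s'-s|, \|x'-x_0\| < \eta_s$; a finite subcover of the compact interval $[0, t_0]$ produces a single $\delta_1 > 0$ with $F(s', x') \subset F([0,t_0], x_0) + B_\E(0, \varepsilon/4)$ for all $s' \in [0, t_0]$ and $\|x' - x_0\| < \delta_1$. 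For $s' \in (t_0, t_0 + \delta_2]$, direct H-upper semicontinuity of $F$ at $(t_0, x_0)$ handles the estimate after choosing $\delta_2$ sufficiently small. Setting $\delta := \min\{\delta_1, \delta_2\}$, I conclude $F([0,t], x) \subset F([0,t_0], x_0) + B_\E(0, \varepsilon/4)$ whenever $|t-t_0| + \|x-x_0\| < \delta$; taking closed convex hulls (and using that $B_\E(0, \varepsilon/4)$ is convex) gives $\widehat{F}(t,x) \subset \widehat{F}(t_0, x_0) + B_\E(0, \varepsilon)$, as required.
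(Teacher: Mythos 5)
Your proof is correct and follows essentially the same path as the paper. For the key step (H-upper semicontinuity of $(t,x)\mm F([0,t],x)$) you use the identical compactness-plus-finite-subcover argument on $[0,t_0]$, handle the overshoot past $t_0$ separately, and then pass to closed convex hulls; the paper also observes that H-usc is preserved under $\overline{\mathrm{conv}}$ and that sublinear growth and tangency are immediate. The one point worth noting is the weak compactness of the values: the paper invokes Remark 3.1(c) (that $F$ maps compact sets to weakly relatively compact sets, via Bothe's result that H-usc maps with convex weakly compact values are weakly usc) and then applies Krein--\v{S}mulian, whereas you verify weak relative compactness of $F([0,t],x)$ directly by an Eberlein--\v{S}mulian extraction combined with the H-usc decomposition $y_n=z_n+e_n$. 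Your version is slightly more self-contained and avoids having to check that Remark 3.1(c) still applies under the hypothesis $(F)$ of Theorem 6.1, but both routes are standard and correct.
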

\begin{proof} It is sufficient to show $[0,1]\times \K \ni \le t,x\pr \mm F \le [0,t], x\pr \subset \E$ is H-usc, for the H-upper semicontinuity and other properties of $\widehat F$  follow rather easily by standard arguments. Take $t_0 \in [0,1]$, $x_0 \in \K$ and $\e>0$.
For some  $\delta_0 >0$
\begin{equation}\label{zawieranie-H-ciaglosc-do-lematu}
F\le t, x_0 \pr \subset F \le t_0 , x_0 \pr + B _\E \le 0 , \e /2\pr
\end{equation}
if $t \in \le t_0 - \delta_0, t_0 + \delta_0 \pr \cap [0,1]$.
For every $t\in [0,t_0 + \delta_0 /2]$ there is $\delta \le t\pr = \delta \le t,x_0 \pr>0$ such that
\[
F \le s , x \pr  \subset F \le t,x_0 \pr + B _\E \le 0, \e/2\pr,
\]
provided  $s \in \le t-\delta (t) , t + \delta \le t \pr \pr ,\; x \in B _\K \le x_0 , \delta \le t \pr \pr $.
Let $\left \{ \le t_i - \delta \le t_i\pr,t_i + \delta\le t _i\pr\pr  \right \}_{i=1, \ldots ,k}$ be a finite subcover of an open cover $\left \{ \le t - \delta \le t\pr , t + \delta \le t\pr \pr \right \}_{t \in [0, t_0 + \delta _0/2 ]}$ of $[0,t_0 +\delta_0/2]$.
Put $\delta := \min \left \{ \delta_0 /2, \delta \le t_1 \pr , \ldots, \delta \le t_k \pr \right\}$.\\
\indent Choose $t\in \le t_0 - \delta,t_0 + \delta \pr \cap [0,1]$, $ x\in B_\K \le x_0, \delta\pr$ and let $y\in F \le [0,t ] \times \{ x \}\pr $, i.e.,  $y\in F\le s,x\pr $ for some $s\in [0,t]$. There is $t_i$ such that $s\in \le t_i - \delta \le t_i \pr , t_i + \delta \le t_i \pr \pr$. The inclusion $B_\K \le x_0, \delta \pr \subset B_\K \le x_0 , \delta \le t_i \pr \pr$ implies
\[
y\in F\le s ,x \pr \subset  F \le t_i, x_0 \pr + B _\E \le 0, \e/2 \pr.
\]
If $t_i \leq t_0$, then
\[
y \in F \le t_i, x_0 \pr + B _\E \le 0, \e /2 \pr \subset F \le [0,t_0 ], x_0\pr +B _\E \le 0, \e \pr,
\]
while if  $t_i >t_0, t_i \in [0, t_0 + \delta /2]$, then $t_i - t_0 \leq \delta _0 /2$, so  by \eqref{zawieranie-H-ciaglosc-do-lematu}
$$
y \in F \le t_i , x_0 \pr + B _\E \le 0, \e/2 \pr \subset F \le t_0, x_0 \pr +B _\E \le 0, \e \pr \subset F \le [0,t_0 ],x_0\pr + B _\E \le 0, \e \pr,
$$
i.e., $F([0,t],x)\subset F([0,t_0],x_0)+B_\E(0,\e)$ if $t\in [0,1]$, $|t-t_0|,\delta$ and $x\in\K$, $\|x-x_0\|<\delta$. \end{proof}
Using the same methods as in Lemma \ref{lem-do-konstr-stopnia}  we get:
\begin{lem}\label{lem-istnienie-alfy-T}
There are is $\alpha>0$, $T>0$ such that $0 \notin A x + \widehat F\le T, B_\K \le x, \alpha\pr \pr  + B_\E \le 0, \alpha \pr$ for $x\in \partial U$.\hfill $\square$
\end{lem}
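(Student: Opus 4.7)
I would argue by contradiction, following the pattern of Lemma~\ref{lem-do-konstr-stopnia} but letting the two parameters $\alpha$ and $T$ tend to zero simultaneously and exploiting the H-upper semicontinuity of $F$ at $t=0$. If the conclusion failed, one would find sequences $\alpha_n,T_n\searrow 0$, points $x_n\in\partial U$, $\bar x_n\in B_\K(x_n,\alpha_n)$, selections $y_n\in\widehat F(T_n,\bar x_n)$, and $\xi_n\in B_\E(0,\alpha_n)$ with $0=Ax_n+y_n+\xi_n$. Fix $h>0$ with $h\omega<1$ and rewrite this as $x_n=J_h\bigl(x_n+h(y_n+\xi_n)\bigr)$.

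Next I would extract a limit. Boundedness of $\partial U$ and the sublinear growth that $\widehat F$ inherits from $(F_3)$ force $\{x_n\},\{\bar x_n\},\{y_n\}$ to be norm bounded and $\xi_n\to 0$. The compactness of $J_h$ (Remark~\ref{slabo-usc}(a)), applied to the resolvent identity above, makes $\{x_n\}$ relatively compact, so along a subsequence $x_n\to x_0\in\partial U$ and $\bar x_n\to x_0$. Since $\E$ is a (separable) Hilbert space in Theorem~\ref{glowne}, it is reflexive, and a further subsequence gives $y_n\rightharpoonup y_0\in\E$.

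The crux is to show that $y_0\in F(0,x_0)$. Given $\e>0$, H-upper semicontinuity of $F$ at $(0,x_0)$ yields $\delta>0$ such that $F(s,x)\subset F(0,x_0)+\bar B_\E(0,\e)$ for every $(s,x)\in[0,\delta)\times(\K\cap B_\E(x_0,\delta))$. For $n$ large, $T_n<\delta$ and $\|\bar x_n-x_0\|<\delta$, so $F([0,T_n],\bar x_n)\subset F(0,x_0)+\bar B_\E(0,\e)$. The right-hand side, being the closed $\e$-neighbourhood of a weakly compact convex set, is convex and norm closed, hence weakly closed, so taking closed convex hull gives $\widehat F(T_n,\bar x_n)\subset F(0,x_0)+\bar B_\E(0,\e)$ as well. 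Thus $y_n$ and its weak limit $y_0$ both lie in $F(0,x_0)+\bar B_\E(0,\e)$; letting $\e\to 0^+$ and invoking closedness of $F(0,x_0)$ yields $y_0\in F(0,x_0)$.

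To conclude, the compactness of $J_h$ sends weakly convergent sequences to norm convergent ones, so $J_h(x_n+h(y_n+\xi_n))\to J_h(x_0+hy_0)$ in norm. Hence $x_0=J_h(x_0+hy_0)$, which places $x_0\in D(A)\cap\partial U$ with $Ax_0+y_0=0$ and $y_0\in F(0,x_0)$, contradicting the standing assumption of Theorem~\ref{glowne}. The main obstacle is the third paragraph: one must cleanly commute the closed convex hull defining $\widehat F$ with the H-upper semicontinuity of $F$ at $(0,x_0)$, which succeeds precisely because $F(0,x_0)+\bar B_\E(0,\e)$ is convex and closed; everything else parallels Lemma~\ref{lem-do-konstr-stopnia}.
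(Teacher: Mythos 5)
Your argument is correct and follows precisely the route the paper indicates by the remark ``using the same methods as Lemma~\ref{lem-do-konstr-stopnia}'': contradiction, the resolvent rewriting $x_n=J_h(x_n+h(y_n+\xi_n))$, compactness of $J_h$ to extract $x_n\to x_0\in\partial U$, weak compactness to extract $y_n\rightharpoonup y_0$, and then identifying the limit to contradict $0\notin Ax+F(0,x)$ on $D(A)\cap\partial U$. You correctly isolate and resolve the one new point not present in Lemma~\ref{lem-do-konstr-stopnia}, namely that $y_n\in\widehat F(T_n,\bar x_n)$ with $T_n\searrow 0$ still forces $y_0\in F(0,x_0)$, via the observation that $F(0,x_0)+\bar B_\E(0,\e)$ is convex and weakly closed so the $\overline{\conv}$ in the definition of $\widehat F$ does no harm.
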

{\bf Step 2.} By Lemma \ref{selekcja-styczna}, there is  locally Lipschitz $f:\K\to \E$ being tangent to $\K$ and an $\alpha$-approximation of $F\le 0,\cdot\pr$.  Arguing as in Lemma \ref{lem-do-konstrukcji-stopnia-2} we find $h_0>0,\, h_0\omega <1$ such that
\begin{equation}\label{inkluzja-retrakcja-brak-pktow-stalych}x\neq r\circ J_h \le x + hf \le x \pr \pr \quad\text{for }x\in \partial U\;\;\hbox{for}\;\; h\in (0,h_0].\end{equation}
Observe that, by definition (see \eqref{definicja-stopnia}),
\begin{equation}\label{deg_pr_st}
\mathrm{deg}_\K \le A + F \le  0,\cdot \pr , U \pr = \mathrm{Ind}_\K \le r\circ J_h \le I + h f\pr, U\pr,\end{equation}
where $r:\E\to\K$ is a retraction. In what follows let $r$ be a {\em metric retraction}, i.e., $\|x-r(x)\|=d(x,\K)$ for any $x\in\E$.\\
\indent Define the auxiliary set-valued map $G:[0,1]\times \K \m \E$ by the formula
\[
G \le z,x \pr:= \le 1-z \pr f\le x \pr + z \widehat{F}\le T, x \pr\;\; z\in [0,1],\;x\in \K.
\]
Obviously, $G$ is H-usc, tangent to $\K$, has sublinear growth and convex weakly compact values.
By Theorem \ref{tw.o-istnieniu}, the solution set of the below problem is $R_\delta$:
\begin{equation}\label{homotopia-z-grubego}
\begin{cases}
\dot{u} = Au + G \le z,x \pr,\;\; u\in \K,\; z \in [0,1] \\
u\left (0\right ) = x \in \mathrm{cl}\, U.
\end{cases}
\end{equation}
\begin{lem}\label{lem-nie-istnienie-rozwiazan-z-pktami-stalymi}
There is $t_0 \in (0, T]$ such that for every $t\in (0,t_0]$  no solution $u$ of \eqref{homotopia-z-grubego} starting at $x\in \partial U$ is such that  $u\le t\pr = x$.
\end{lem}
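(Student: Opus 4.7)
I argue by contradiction. Suppose no such $t_0$ exists, extract $t_n \searrow 0$, $z_n \in [0,1]$, $x_n \in \partial U$, mild solutions $u_n$ of \eqref{homotopia-z-grubego} on $[0,t_n]$ with $u_n(0) = u_n(t_n) = x_n$, and integrable selections $w_n(s) \in G(z_n, u_n(s))$. The starting identity is
\begin{equation*}
x_n = S(t_n) x_n + \int_0^{t_n} S(t_n - s) w_n(s)\,ds.
\end{equation*}
The goal is to pass to the limit and produce $x_0 \in \partial U \cap D(A)$ together with $\bar w$ such that $0 = A x_0 + \bar w$ and $\bar w$ lies in a small neighborhood of $\widehat F(T, x_0)$, contradicting Lemma \ref{lem-istnienie-alfy-T}.

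\textbf{Compactness.} Because \eqref{homotopia-z-grubego} is autonomous, I extend each $u_n$ periodically to a mild solution on $[0,1]$. Then $u_n(\ell t_n) = x_n$ for every integer $\ell$ with $\ell t_n \in [0,1]$; for $n$ large I pick $s_n := \ell_n t_n \in [1/2, 1]$. The sublinearity $(F_3)$ together with Gronwall bounds $\{u_n\}$ uniformly, so $\{w_n\}$ is integrably bounded. Writing $x_n = u_n(s_n) = S(1/2)\bigl(S(s_n - 1/2) x_n\bigr) + K_0(w_n)(s_n)$, the compactness of $S(1/2)$ and Lemma \ref{uzwarcanie} show that $\{x_n\}$ is norm-relatively compact in $\E$. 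Along a subsequence, $x_n \to x_0 \in \partial U$ in norm, $z_n \to z_0$, $u_n \to u_0$ in $\mathcal{C}([0,1], \K)$, and by Diestel's criterion together with Remark \ref{slabo-usc}(c), $w_n \rightharpoonup w_0$ weakly in $L^1([0,1], \E)$. Moreover, the compactness of $\{x_n\}$ and strong continuity of $S$ at $0$ give $\|u_n(s) - x_n\| \to 0$ uniformly for $s \in [0, t_n]$, so by H-upper semicontinuity of $G$ in $(z, x)$ and convexity of its values, $\bar w_n := t_n^{-1} \int_0^{t_n} w_n(s)\,ds \in G(z_0, x_0) + B_\E(0, \eta_n)$ with $\eta_n \to 0$; by weak compactness of $G(z_0, x_0)$ in the Hilbert space $\E$, some further subsequence satisfies $\bar w_n \rightharpoonup \bar w \in G(z_0, x_0)$.

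\textbf{Resolvent passage to the limit.} Fix $h > 0$ with $h\omega < 1$, apply $J_h$ to the mild identity and divide by $t_n$:
\begin{equation*}
\frac{J_h x_n - S(t_n) J_h x_n}{t_n} = \frac{1}{t_n}\int_0^{t_n} S(t_n - s) J_h w_n(s)\,ds.
\end{equation*}
The left-hand side equals $-t_n^{-1}\int_0^{t_n} S(s) A J_h x_n\,ds$; since $A J_h x_n = h^{-1}(J_h x_n - x_n) \to A J_h x_0$ in norm and $S(\cdot)$ is strongly continuous at $0$, it tends to $-A J_h x_0$. On the right, the compactness of $J_h$ and the uniform bound on $\{w_n\}$ make $J_h(\{w_n(s)\})$ norm-relatively compact, so $S(\tau) \to I$ uniformly on its closure as $\tau \to 0^+$; hence the right-hand side has the same limit as $J_h \bar w_n$. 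The compactness of $J_h$ also converts $\bar w_n \rightharpoonup \bar w$ into norm convergence $J_h \bar w_n \to J_h \bar w$. The resulting identity $-A J_h x_0 = J_h \bar w$ rewrites as $x_0 = J_h(x_0 + h \bar w)$, forcing $x_0 \in D(A)$ and $A x_0 + \bar w = 0$.

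\textbf{Conclusion and main obstacle.} Write $\bar w = (1 - z_0) f(x_0) + z_0 \tilde w$ with $\tilde w \in \widehat F(T, x_0)$. Since $f$ is an $\alpha$-approximation of $F(0, \cdot) \subset \widehat F(T, \cdot)$, choosing $\alpha$ small enough at the outset (using H-usc of $\widehat F(T, \cdot)$ at $x_0$) gives $f(x_0) \in \widehat F(T, x_0) + B_\E(0, \alpha/2)$; convexity of $\widehat F(T, x_0)$ then yields $\bar w \in \widehat F(T, x_0) + B_\E(0, \alpha/2) \subset \widehat F(T, B_\K(x_0, \alpha)) + B_\E(0, \alpha)$, contradicting Lemma \ref{lem-istnienie-alfy-T}. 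The main obstacle is the compactness step: in the infinite-dimensional $\E$ the sequence $\{x_n\} \subset \partial U$ is only known to be bounded, and the compactness of the semigroup fails at $t = 0$; the autonomy of the problem, which allows periodic extension to an instant bounded away from zero, is precisely what lets the compactness of $S(1/2)$ pull $\{x_n\}$ into a norm-compact set.
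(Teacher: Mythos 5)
Your compactness step is the same as the paper's: extend the $t_n$-periodic solution $u_n$ past $t_n$, evaluate the mild formula at an instant bounded away from $0$, and use the compactness of the semigroup to squeeze $\{x_n\}$ into a norm-compact set. From there, however, you and the paper take genuinely different routes to the contradiction. The paper first shows $u_0\equiv x_0$, picks a Lebesgue point $\xi$ of $\int_0^\cdot w_0$, and proves $\eta^{-1}\int_\xi^{\xi+\eta}(w_0-S(\xi+\eta-\cdot)w_0)\,ds\rightharpoonup 0$ using the strong continuity of the dual semigroup $S^\ast$ (this is where the Hilbert-space hypothesis enters); it then identifies $\eta^{-1}(S(\eta)x_0-x_0)\rightharpoonup -w_0(\xi)$ and invokes Pazy's criterion to place $x_0$ in $D(A)$. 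You instead apply the compact resolvent $J_h$ to the mild identity, divide by $t_n$, and pass to the limit on both sides directly: the left side converges to $-AJ_hx_0$, the right side to $J_h\bar w$ (using compactness of $J_h$ both to upgrade the weak limit to a norm limit and to make $S(\tau)\to I$ uniform on the relevant set), giving $x_0=J_h(x_0+h\bar w)$ and hence $x_0\in D(A)$, $Ax_0+\bar w=0$. This is shorter, avoids the Lebesgue-point bookkeeping, and notably does not use $S^\ast$ at all, so the Hilbert hypothesis plays no role in your argument for this particular lemma; your use of $\bar w_n:=t_n^{-1}\int_0^{t_n}w_n$ as the averaged selection, placed in $G(z_0,x_0)$ by H-usc and convexity and then weak compactness of the value, is a clean substitute for working with the weak $L^1$-limit $w_0$ pointwise.

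One small wrinkle to flag in your conclusion: you write that ``choosing $\alpha$ small enough at the outset (using H-usc of $\widehat F(T,\cdot)$ at $x_0$)'' yields $f(x_0)\in\widehat F(T,x_0)+B_\E(0,\alpha/2)$. But $x_0$ emerges only at the end of the contradiction argument, so $\alpha$ cannot be tuned to $x_0$ in advance; $\alpha$ is fixed already in Lemma \ref{lem-istnienie-alfy-T}. What you actually have, directly from $f$ being a tangent $\alpha$-approximation of $F(0,\cdot)$, is $f(x_0)\in F(0,B_\K(x_0,\alpha))+B_\E(0,\alpha)\subset\widehat F(T,B_\K(x_0,\alpha))+B_\E(0,\alpha)$, and then $\bar w=(1-z_0)f(x_0)+z_0\tilde w$ lies (after absorbing convex combinations, exactly as the paper does in its last line via $\mathrm{conv}$) in the forbidden neighborhood of Lemma \ref{lem-istnienie-alfy-T}. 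So the conclusion is salvageable without the quantifier slip, and the same convex-hull looseness is present in the paper's own final sentence.
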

\begin{proof}
Suppose to the contrary that for each integer $n\geq n_0$, where $n_0^{-1} <T$ there are $x_n \in \partial U$, $t_n \in (0,n^{-1}]$, $z_n\in [0,1]$ and the solution $u_n:[0,t_n]\to \K$ of \eqref{homotopia-z-grubego} such that $u_n \le  0 \pr =x_n = u_n \le t_n \pr $.
Then there is $w_n \in L^1 \le [0,t_n],\E\pr $ such that $w_n \le s \pr \in G\le z_n, u_n \le s \pr \pr$ for a.e. $s\in [0,t_n]$ and
\begin{equation}\label{wzor-wewnatrz-lematu-rozszerzanie-periodyczne}
u_n \le t \pr = S \le t \pr x_n + \int_0^t S \le t -s \pr w_n \le s \pr \mathrm{d}s, \quad t\in  [0,t_n].
\end{equation}
Extending periodically, we may assume that $u_n$ and $w_n$ are defined on $[0,T]$, i.e. $u_n \in \mathcal{C}\le [0,T], \K\pr$, $w_n \in L^1 \le [0,T], \E\pr$.
The semigroup property ensures that formula \eqref{wzor-wewnatrz-lematu-rozszerzanie-periodyczne} is valid for every $t\in [0,T]$ and $w_n \in G \le z_n ,u_n \le s \pr \pr$ for a.e. $s\in [0,T]$.
Thus $u_n$ is a solution on $[0,T]$ of \eqref{homotopia-z-grubego}.\\
\indent The growth condition and Gronwall's inequality imply that $\left \{ u_n \right \}_{n\geq 1}$ is bounded.
Therefore $\left \{ w_n \right \}_{n\geq 1}$ being a.e. bounded by a constant is weakly relatively compact in $L^1 \le [0,T], \E \pr$ (cf. \cite[Cor. 2.6]{diest}).
Passing to a subsequence we may assume that $w_n \rightharpoonup w_0 \in L^1 \le [0,T], \E \pr $ and $z_n \to z_0 \in [0,1]$.\\
\indent To prove that $\left \{ u_n  \right \}_{n\geq 1}$ is relatively compact it is enough to show that  so is $\left \{ x_n \right \}_{n\geq 1}$ (cf. Lemma \ref{uzwarcanie}).
Take $T_0 \in (0,T)$ and put $k_n:= \le [T_0 /t_n ] +1 \pr$.
Then $r_n:= k_n t_n - T_0 \to 0$ and $u_n \le k_n t_n \pr =x_n$ for large $n$.
So for sufficiently large $n\geq 1$: $T_0 +r_n < T$ and
\[
x_n = u_n \le k_n t_n \pr = S \le T_0 + r_n \pr x_n + \int _0 ^{T_0 + r_n } S \le T_0 + r_n -s \pr w_n \le s \pr \mathrm{d}s.
\]
The compactness of the semigroup yields that $\left \{ x_n \right \}_{n\geq 1}$ is relatively compact and so $x_n \to x_0 \in \partial U$.\\
\indent Thus $u_n \to u_0 \in \mathcal{C}\le [0,T], \K\pr$, and by the uniform equicontinuity of $\left \{ u_n \right \}_{n\geq 1}$
\[
\| u_0 \le t \pr - x_0 \| \leq \|u_0 -u_n \|+ \|u_n \le t \pr - u_n  \le [t/t_n ] t_n \pr \|+ \|x_n - x_0\| \to 0,
\]
hence $u\le t \pr = x_0$ for $t\in [0,T]$.
Therefore
\begin{equation}\label{wzor-rozw-stacjonarne}
x_0 = S \le t \pr x_0 + \int _0 ^t S \le t-s \pr w_0 \le s \pr \mathrm{d}z
\end{equation}
and $w_0 \le s \pr \in G \le z_0 , x_0 \pr$ for a.e. $s\in [0,T]$.
Since $[0,T] \ni t \mapsto \int_0^t w_0 \le s \pr \mathrm{d}s $ is a.e. differentiable, take $\xi \in (0,T)$ such that $w_0\le \xi \pr \in G \le z_0 , x_0 \pr $ and $\dfrac{d}{dt} \Big |_{t=\xi} \int_0^t w_0 \le s \pr \mathrm{d}s = w_0 \le \xi \pr $.
By \eqref{wzor-rozw-stacjonarne} for small $\eta >0$; $x_0 = S \le \eta \pr x_0 + \int _\xi ^{\xi + \eta }S \le \xi + \eta - s \pr w_0 \le s \pr \mathrm{d}s.$\\
\indent We show that
\begin{equation}\label{wzor-slaba-zbieznosc}
\frac{1}{\eta} \int _\xi ^{\xi + \eta} \le w_0 \le s \pr - S \le \xi + \eta -s \pr w_0 \le s \pr \pr \mathrm{d}s \rightharpoonup 0\quad\text{as }\eta \to 0^+.
\end{equation}
Take $p\in \E^\ast$ and $\e>0$.
Then
\[
\left\langle \frac{1}{\eta} \int_\xi ^{\xi + \eta}\le w_0 \le s \pr - S \le \xi + \eta - s \pr w_0\le s \pr\pr \mathrm{d}s,\; p \right\rangle = \frac{1}{\eta} \int_\xi ^{\xi + \eta}\left\langle w_0 \le s \pr ,\; p - S^\ast \le \xi + \eta - s \pr p\right\rangle \mathrm{d}s
\]
and since $\E$ is the Hilbert space the dual semigroup $\left \{ S ^\ast \le t\pr  \right \}_{t\geq 0}$ is strongly continuous.
Thus there is $\delta >0$ such that  $\| S^\ast \le t \pr p - p \|  < \e / M$ if $0 \leq t < \delta$ where $M:= \sup _{y \in G \le z_0 ,x_0 \pr } \| y \|$.
If $0<\eta <\delta$, then for $\xi \leq s \leq \xi + \eta$ we have $\| S ^\ast \le \xi + \eta - s \pr p - p \| < \e /M$ so
\[
| \langle w_0 \le s \pr , p - S ^\ast \le \xi + \eta - s \pr p\rangle | < \e\quad \text{for a.e. }s \in [\xi, \xi + \eta ],
\]
what proves \eqref{wzor-slaba-zbieznosc}.
As a result
\[
\frac{S\le \eta \pr x_0 - x_0}{\eta} = \frac{1}{\eta} \int_\xi ^{\xi + \eta}\le w_0 \le s \pr - S \le \xi + \eta - s \pr w_0\le s \pr\pr \mathrm{d}s - \frac{1}{\eta} \int_\xi ^{\xi + \eta} w_0 \le s \pr \mathrm{d}s \rightharpoonup - w_0 \le \xi \pr.
\]
In view of \cite[Th. 2.1.3]{pazy}  $x_0\in D \le A \pr \cap \partial U$ and $A x_0 = -w_0 \le \xi \pr $.
Hence $0 = Ax_0 + w_0 \le \xi \pr \in Ax_0 + G \le z_0 , x_0 \pr $ what contradicts Lemma \ref{lem-istnienie-alfy-T}, since $G\le z_0,x_0\pr \subset \mathrm{conv}F_T \le B_\K \le x_0, \alpha \pr \pr + B_\E \le 0 , \alpha \pr $.
\end{proof}
{\bf Step 3.} Recall now the solution operator $\Sigma : \K \m \mathcal{C}\le [0,1], \K \pr $ (see \eqref{op_rozw}) and the $t$-Poincar\'e operator $\Sigma_t : \K\m \K$ associated with to \eqref{inkluzja-rozn} and consider their restrictions to $\mathrm{cl}\,U$. By a slight abuse of notation, we will still denote these restrictions by the same symbols, i.e. $\Sigma :\mathrm{cl}\,U \m \mathcal{C}\le [0,1] , \K\pr$ is represented by the $c$-admissible pair
\[
\mathrm{cl}\,U \xleftarrow{p_\Sigma} \Gamma \xrightarrow{q_\Sigma} \mathcal{C} \le [0,1],\K\pr
\]
with $p_\Sigma$, $q_\Sigma$ and $\Gamma= \left\{ \le x, u \pr \in \mathrm{cl}\,U\times\mathcal{C}\le [0,1],\K \pr \mid u\in \Sigma \le x\pr \right\}$ having the same sense as in \eqref{op_rozw}, while $\Sigma_t: \mathrm{cl}\,U\m \K$ is represented by
\[
\mathrm{cl}\,U \xleftarrow{p_t} \Gamma \xrightarrow{q_t}\K
\]
with $p_t:=p_\Sigma$, $q_t:= e_t \circ q_\Sigma$.\\
\indent Taking into account \eqref{deg_pr_st} and Remark \ref{uwaga-stopien-party-jednowartosciowej} we are to show that for sufficiently small $t>0$, $h>0$, the $c$-admissible pairs $\le p_t,q_t \pr $ and $\le\mathrm{id}, r\circ J_h \le I + hf\pr \pr$, where $\mathrm{id}$ stand for the identity on $\cl U$, are $c$-homotopic via a compact $c$-homotopy without fixed points on the boundary $\partial U$.  This will be done in several stages.

For any $x\in\K$, the problem
\begin{equation}\label{prob_f}
\begin{cases}
\dot{u} = Au + f(u)\; \text{ for } t \in I,\\
u\left (0\right ) = x
\end{cases}
\end{equation}
possesses the unique solution $P(x)$; the map $P:\mathrm{cl}\,U\to \mathcal{C}\le [0,1], \K\pr$ is continuous. For $t\in [0,1]$, the Poincar\'{e} $t$-operator $P_t : \mathrm{cl}\,U \to \K$ associated to \eqref{prob_f}, i.e., given by $ P_t \le x\pr := P(x) \le t \pr$ for $x\in \mathrm{cl}\,U$, is compact.\\
\indent Let us consider the Poincar\'e operator $\Phi:[0,1]\times \mathrm{cl}\,U \m \mathcal{C}\le [0,1],\K\pr$ associated with the problem
\begin{equation}\label{zag-pocz-t-op-hom}
\dot{u} \in A u + \le 1-z \pr f \le u\pr + z F \le t, u \pr, \text{ for } t\in [0,1],z \in [0,1], u \in \K.
\end{equation}
It is clear that $\Phi$ is cell-like (cf. Remark \ref{uwaga-parametryczna-wersja}).
Fix $t\in [0,1]$ and consider the Poincar\'e $t$-operator $\Phi_t : [0,1] \times \mathrm{cl}\,U \m \K$ defined by
$$ \Phi _t \le z,x \pr := \left \{ u \le t\pr \in \K \mid u \in \Phi \le z,x \pr \right \}.$$
As before $\Phi_t$ is compact and $c$-admissible (cf. \ref{uwaga-c-admisible-solution-mapping}). If $u\in \Phi \le z,x\pr$ for some $z\in [0,1],x\in \K$ then $u\restr {[0,t_0]}$ is also the solution of the problem \eqref{homotopia-z-grubego} on the segment $[0,t_0]$, since $F\le t,y\pr \subset \widehat{F}\le T, y\pr $ for $t\in [0,T], y\in \K$.
Thus, by Lemma \ref{lem-nie-istnienie-rozwiazan-z-pktami-stalymi}
\[
x\notin \Phi _t \le z,x\pr \quad\text{for } t \in [0,t_0], x\in \partial U, z\in [0,1].
\]
Clearly $\Phi \le 1,\cdot \pr = \Sigma$ and $\Phi \le 0,\cdot\pr  = P$ (so $\Phi_t \le 1, \cdot \pr = \Sigma _t $ and $\Phi _t \le 0,\cdot \pr = P_t $).
Therefore, the canonical pair $\le p_\Phi,q_\Phi \pr$ representing $\Phi$ is the $c$-homotopy joining $\le p_\Sigma, q_\Sigma \pr$ to the canonical pair representing $P$. Therefore  the pair $\le p_\Phi , e_t \circ q_\Phi \pr$ representing $\Phi_t$ is a $c$-homotopy joining $\le p_t, q_t \pr $ to $\le \mathrm{id}_{\mathrm{cl}\,U} , P_t \pr$. Hence:
\begin{lem}\label{lem-homotopia-laczaca-t-Poincare-z-prawa-strona}
If $t\in (0,t_0]$ ($t_0$ is given by Lemma \ref{lem-nie-istnienie-rozwiazan-z-pktami-stalymi}) then the pairs $\le p_t , q_t \pr $ and $\le \mathrm{id}_{\mathrm{cl}\,U} , P_t\pr$ are $c$-homotopic via the compact $c$-homotopy without fixed points on $\partial U$\hfill $\square$.
\end{lem}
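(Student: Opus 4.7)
My plan is to exhibit the claimed $c$-homotopy as the canonical pair coming from the parameterized inclusion \eqref{zag-pocz-t-op-hom}, and then to read off the boundary condition from Lemma \ref{lem-nie-istnienie-rozwiazan-z-pktami-stalymi}.

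First, I would verify that the right-hand side $H(z,t,u):=(1-z)f(u)+zF(t,u)$, viewed as a set-valued map on $[0,1]\times [0,1]\times \K$ with $z\in [0,1]$ playing the role of the compact parameter in the sense of Remark \ref{uwaga-parametryczna-wersja}, inherits $(F_1)$--$(F_4)$ uniformly in $z$. Convex weakly compact values, H-upper semicontinuity in $u$, and sublinear growth follow directly from the corresponding properties of $f$ and $F$; tangency to $\K$ follows because $f(u)\in T_\K(u)$, $F(t,u)\cap T_\K(u)\neq\emptyset$ by $(F_4)$, and $T_\K(u)$ is convex. Remark \ref{uwaga-parametryczna-wersja} then yields that the solution map $\Phi:[0,1]\times \cl U\m \mathcal{C}([0,1],\K)$ of \eqref{zag-pocz-t-op-hom} is usc with $R_\delta$ (hence cell-like) values, while the compactness of the semigroup together with Lemma \ref{uzwarcanie} ensures that $\Phi$ and its $t$-evaluation $\Phi_t=e_t\circ \Phi$ are compact.

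The candidate $c$-homotopy is the canonical pair $[0,1]\times \cl U \xleftarrow{p_\Phi} \Gamma_\Phi \xrightarrow{e_t\circ q_\Phi} \K$, where $\Gamma_\Phi$ is the graph of $\Phi$ and $p_\Phi$, $q_\Phi$ are the projections. At $z=1$ the inclusion \eqref{zag-pocz-t-op-hom} coincides with \eqref{inkluzja-rozn}, so $\Phi_t(1,\cdot)=\Sigma_t$ is represented by $(p_t,q_t)$; at $z=0$ it reduces to the single-valued locally Lipschitz equation \eqref{prob_f}, whose unique solution $P(x)$ gives $\Phi_t(0,\cdot)=P_t$, represented by $(\mathrm{id}_{\cl U},P_t)$ in view of Remark \ref{uwaga-stopien-party-jednowartosciowej}(ii). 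The commutativity of the diagram defining a $c$-homotopy is then supplied by the natural inclusions of $\Gamma_\Sigma$ and of the graph of $P$ into $\Gamma_\Phi$ over $i_1(x)=(x,1)$ and $i_0(x)=(x,0)$. The absence of fixed points on $\partial U\times[0,1]$ is precisely Lemma \ref{lem-nie-istnienie-rozwiazan-z-pktami-stalymi}: for $x\in\partial U$, $z\in[0,1]$ and $t\in(0,t_0]$, no solution of \eqref{homotopia-z-grubego} starting at $x$ satisfies $u(t)=x$, i.e.\ $x\notin\Phi_t(z,x)$.

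The only delicate point I anticipate is matching the $z=0$ slice of the canonical pair with the formal representation $(\mathrm{id}_{\cl U},P_t)$ of a single-valued map, but this is settled by observing that a singleton is trivially cell-like, so the graph embedding $x\mapsto(x,P(x))\in\Gamma_\Phi$ furnishes the required map $j_0$, and analogously $\Gamma_\Sigma\hookrightarrow\Gamma_\Phi$ furnishes $j_1$. With these verifications, Lemma \ref{lem-homotopia-laczaca-t-Poincare-z-prawa-strona} follows directly.
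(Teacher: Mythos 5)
Your proposal follows the paper's own route almost exactly: you introduce the same parameterized inclusion \eqref{zag-pocz-t-op-hom}, appeal to Remark \ref{uwaga-parametryczna-wersja} for cell-like values, use the compactness of the semigroup via Lemma \ref{uzwarcanie} to get compactness of the canonical pair, identify the $z=0$ and $z=1$ slices with $P_t$ and $\Sigma_t$, and take the canonical pair of $\Phi$ as the $c$-homotopy, with the graph embeddings furnishing $j_0,j_1$. All of that matches the paper.

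There is one real, if small, gap: you assert that Lemma \ref{lem-nie-istnienie-rozwiazan-z-pktami-stalymi} ``is precisely'' the boundary condition $x\notin\Phi_t(z,x)$, but that lemma concerns the auxiliary problem \eqref{homotopia-z-grubego}, whose right-hand side is built from the time-independent $\widehat{F}(T,\cdot)$, not the time-dependent $F(t,\cdot)$ appearing in \eqref{zag-pocz-t-op-hom}. The passage from one to the other requires the explicit observation that, since $t_0\le T$, one has $F(t,y)\subset\widehat{F}(T,y)$ for $t\in[0,t_0]$ and $y\in\K$, so that every solution of \eqref{zag-pocz-t-op-hom} restricted to $[0,t_0]$ is also a solution of \eqref{homotopia-z-grubego}, and only then does the boundary estimate of Lemma \ref{lem-nie-istnienie-rozwiazan-z-pktami-stalymi} apply to $\Phi_t$. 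The paper states this inclusion explicitly; your ``i.e.'' papers over it. Once that one sentence is added, the argument closes.
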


\begin{prop}\label{rew}There are $0<t_1\leq t_0$ and $0< h_1\leq h_0$ such that for $t\in (0,t_1], h\in (0,h_1]$ maps $P_t$ and $g:=r\circ J_h(I+hf)$ (see \eqref{deg_pr_st}) are homotopic via a compact  homotopy without fixed points on $ \partial U$.
\end{prop}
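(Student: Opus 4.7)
I would construct a straight-line homotopy joining $g$ to $P_t$ on $\mathrm{cl}\,U$:
$$H(\lambda,x) := (1-\lambda)\,g(x) + \lambda\,P_t(x), \qquad (\lambda,x) \in [0,1]\times\mathrm{cl}\,U.$$
Since $\K$ is convex, $H$ takes values in $\K$; and since both $P_t$ (by compactness of $\{S(\tau)\}_{\tau>0}$) and $g = r\circ J_h\circ(I+hf)$ (by compactness of the resolvent $J_h$, Remark \ref{slabo-usc}(a)) are compact, $H$ is a compact homotopy. Once $H$ is shown to be fixed-point-free on $\partial U$ for all sufficiently small $t,h>0$, the proposition follows from the homotopy invariance of the constrained fixed point index, Remark \ref{uwaga-stopien-party-jednowartosciowej}(i).

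\textbf{Absence of boundary fixed points.} This I would prove by contradiction. Assume sequences $t_n,h_n\searrow 0$, $\lambda_n\in[0,1]$ and $x_n\in\partial U$ with
$$x_n = (1-\lambda_n)\,g_n(x_n) + \lambda_n\,P_{t_n}(x_n),\qquad g_n := r\circ J_{h_n}(I + h_n f).$$
Both $\{g_n(x_n)\}$ and $\{P_{t_n}(x_n)\}$ are relatively compact (images of a bounded set under compact maps), so their convex combination $x_n$ has a convergent subsequence $x_n\to x_0\in\partial U$; along it, $g_n(x_n), P_{t_n}(x_n)\to x_0$ and $\lambda_n\to\lambda_0$. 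Setting $u_n := J_{h_n}(x_n + h_n f(x_n))\in D(A)$, we have $A u_n = (u_n-x_n)/h_n - f(x_n)$ and $g_n(x_n) = r(u_n)$, so the fixed-point equation rewrites as
$$h_n(1-\lambda_n)\,\bigl(Au_n + f(x_n)\bigr) + (1-\lambda_n)\,\bigl(r(u_n) - u_n\bigr) + \lambda_n\,\bigl(P_{t_n}(x_n) - x_n\bigr) = 0,$$
in which $\|r(u_n)-u_n\| = d(u_n,\K) = o(h_n)$ by Lemma \ref{lem-drugie-podejscie-stycznosc}.

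\textbf{Extracting a limit equation and the main obstacle.} Dividing the previous identity by $h_n$ and using the variation-of-constants expression $P_{t_n}(x_n)-x_n = (S(t_n)-I)x_n + \int_0^{t_n}S(t_n-s)f(v_n(s))\,ds$ for the trajectory $v_n$, one bounds $\{Au_n\}$; applying then a fixed resolvent $J_a$ ($a>0$) exploits closedness of $A$ and compactness of $J_a$ to pass to a weak limit and infer $x_0\in D(A)$ with $Ax_0 + f(x_0) = 0$. This contradicts Lemma \ref{lem-do-konstr-stopnia} (applied to the tangent approximation $f$ of $F(0,\cdot)$). The main obstacle is precisely this limit extraction, since $t_n$ and $h_n$ vary independently and introduce two different scales. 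I would handle it either by shrinking the thresholds $t_1, h_1$ so that an implicit relation such as $t_n\le h_n$ (or vice versa) may be enforced after extraction, absorbing the dominated scale into an error term, or by case-splitting on whether $t_n/h_n$ is bounded and rescaling accordingly, in close analogy with the boundary-contradiction arguments already deployed in Lemmata \ref{lem-do-konstrukcji-stopnia-2} and \ref{lem-nie-istnienie-rozwiazan-z-pktami-stalymi}.
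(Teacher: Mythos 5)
Your straight-line homotopy $H(\lambda,x)=(1-\lambda)g(x)+\lambda P_t(x)$ is a genuinely different route from what the paper does, and the compactness of $H$ is correctly argued. The difficulty is exactly the one you flag as the ``main obstacle'': the no-boundary-fixed-point claim is the entire content of the proposition, and neither of your proposed fixes closes the gap. The contradiction argument must cover \emph{arbitrary} sequences $t_n,h_n\searrow 0$ and $\lambda_n\in[0,1]$, because the proposition asserts the homotopy is admissible for \emph{every} pair $(t,h)\in(0,t_1]\times(0,h_1]$; you do not get to impose a relation such as $t_n\le h_n$, since the sequences come from the assumed failure, not from a choice you make. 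Concretely, after dividing your identity by $h_n$ the coefficient of $Au_n$ is $(1-\lambda_n)$, so if $\lambda_n\to 1$ the equation yields no control whatsoever on $Au_n$; the only surviving information, $\|P_{t_n}(x_n)-x_n\|=O(1-\lambda_n)$, is vacuous (both sides tend to $x_0$ anyway) and cannot be fed into the periodic-extension device of Lemma~\ref{lem-nie-istnienie-rozwiazan-z-pktami-stalymi}, which requires $P_{t_n}(x_n)=x_n$ exactly. Symmetrically, if $1-\lambda_n$ stays bounded away from $0$ you would need $(P_{t_n}(x_n)-x_n)/h_n$ to be bounded, and there is no such bound: $(S(t_n)-I)x_n\to 0$ only by strong continuity, with no rate when $x_n\notin D(A)$, and $t_n$ is not tied to $h_n$. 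So the step ``one bounds $\{Au_n\}$'' is unjustified, and the subsequent resolvent/weak-limit passage does not get off the ground.

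The paper sidesteps this two-scale problem by refusing the straight line and using two deformations, each calibrated to a single scale. In Claim~1, $P_t$ is deformed \emph{at fixed flow time $t$} to the Poincar\'e $t$-operator $\Theta_t(\cdot,0)$ of the bounded-generator problem $\dot u=-u+g(u)$ along the interpolating family of generators $A_z=\bigl(z-1-\tfrac{z}{h}\bigr)I+zA$ with matching nonlinearities $f_z$; the absence of boundary fixed points and the condensing property then come from \cite[Theorem 4.5]{cw}, with $t$ as the only small parameter. In Claim~2, $\Theta_t(\cdot,0)$ is deformed to $g=r\circ J_h(I+hf)$ by the time-rescaling $\widehat{\Psi}_t(z,x)=\bigl(1-\tfrac{1}{z(t+z-zt)}\bigr)x+\tfrac{1}{z(t+z-zt)}\Theta_{zt}(x,0)$, whose Euler limit at $z\to 0^+$ is exactly $g$; here the constraint is handled by projecting via $\Psi_t=r\circ\widehat{\Psi}_t$ and Lemma~\ref{lem-o-rzucie}, which guarantees that the projection creates no new boundary fixed points. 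Finally Lemma~\ref{lem-zwarta-zamiast-kondensujacej} upgrades the condensing homotopies to compact ones with the same fixed-point set. If you wish to salvage a one-step argument, you would have to control $Au_n$ and $(S(t_n)-I)x_n/t_n$ \emph{simultaneously} across all relative rates of $t_n$, $h_n$ and $1-\lambda_n$, which is substantially harder than the single-scale analyses in Lemmata~\ref{lem-do-konstr-stopnia}, \ref{lem-do-konstrukcji-stopnia-2} and \ref{lem-nie-istnienie-rozwiazan-z-pktami-stalymi} that you cite as templates.
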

\begin{proof}\textbf{Claim 1.} For sufficiently small $t > 0$ and $h >0$ the Poincar\'e $t$-operators associated with \eqref{prob_f} and the problem
\[
\le P_{J_h} \pr \; \begin{cases}
\dot{u} = - u + g(u),\\
u\left (0\right ) = x
\end{cases}
\]
are homotopic via a condensing  (with respect to the Hausdorff measure of noncompactness) homotopy without fixed points on $\partial U$.\\
\indent Fix $h\in (0,h_0]$ and consider a parameterized problem
\begin{equation}\label{zag-pocz-g_z}
\dot{u} = z A + g_z \le u \pr,\;\;\text{for }\;z\in [0,1],\; u\in \K
\end{equation}
where $g_z: \K \to \E$ is defined by
\[
g_z \le x \pr  := z f\le x\pr +\le 1-z \pr \le - x + g(x)\pr \quad \text{for }\;z\in [0,1],\;x\in \K.
\] Clearly, for each $z\in [0,1]$, $g_z$ is locally Lipschitz, since so are $f$ and $r$. Moreover, for any $x\in \K$, $f\le x\pr \in T_\K \le x\pr $ and
\[
g(x)=-x + r \circ J_h\le x + hf \le x \pr \pr \in \K-x \subset T_\K \le x\pr\quad\text{for }x\in \K
\]
and, hence, $g_z \le x \pr \in T_\K \le x \pr $ for $x\in \K$. It is easy to see that $g_z$ has sublinear growth and the semigroup $\left \{ S \le zt\pr  \right \}_{t\geq 0}$ generated by the operator $zA$ leaves the set $\K$ invariant.
Thus, for any $z\in [0,1]$, $x\in\K$, the problem \eqref{zag-pocz-g_z} along with the initial condition $u \le 0\pr = x$ has a unique mild solution $\Theta \le x,z\pr : [0,1] \to \K$. Obviously $\Theta(x,0)$ is the solution to $(P_{J_h})$ while $\Theta(x,1)$ is the solution to $(P_{A,f})$.\\
\indent To see that, for some small $t>0$, the map 
$$\cl U\times [0,1]\ni (x,z)\mapsto \Theta_t(x,z):=\Theta(x,z)(t)\in \K$$ is the required homotopy joining the Poincar\'e $t$-operators of $(P_{J_h})$ and $(P_{A,f})$ we need to study a different form of \eqref{zag-pocz-g_z}.  Namely consider the following family
$\left \{ A _z : D \le A \pr \to \E\right\}_{z\in [0,1]}$ of operators
defined by 
$$A_z := \le z - 1 - \frac{z}{h} \pr I +zA\;\; \hbox{for};z\in [0,1]$$ and  let $f_z :\K \to \E$ be given by the formula
\[
f_z \le x \pr := \le \frac{z}{h}I+ \le 1-z\pr r \circ J_h \pr \le x + h f \le x \pr \pr, \quad\text{for }\;h \in (0,h_0],\; z \in [0,1],\; x\in \K.
\]
A straightforward calculation shows that for $z\in [0,1]$ and $x\in \K$
$A_z x + f_z \le x\pr =  zA x + g_z \le x \pr$. Hence and by the use of 
the formula \cite[Chapter 3.1. (1.2)]{pazy} and the Fubini theorem we gather that $\Theta \le x,z\pr $ is also the unique solution to the problem
\begin{equation}\label{zag-pocz-f_z}
\begin{cases}
\dot{u}\le t \pr = A_z u \le t \pr + f_z \le u \le t\pr \pr ,\\
u \le 0 \pr = x.
\end{cases}
\end{equation}
By \cite[Theorem 4.5]{cw}, the operator $\Theta_t$ is continuous and condensing with respect to the Hausdorff measure of noncompactness; moreover there is $t_1^\prime >0$ such that if $t\in (0,t_1^\prime]$, then
\begin{equation}\label{brak-pktow-st-na-brzegu-homotopia}
\Theta _t \le x,z\pr \neq x \quad \text{for }\; z\in [0,1],\; x\in \partial U.
\end{equation}
We have just shown that if $0<t<t_1'$, then the Poincar\'e $t$-operator $P_t$ is homotopic to the Poincar\'e $t$-operator $\Theta_t \le\cdot,0\pr $ associated tp $(P_{J_h})$ via a condensing homotopy without fixed points on  $[0,1]\times \partial U$.\\
\indent \textbf{Claim 2.} For sufficiently small $t>0$ and sufficiently small $h>0$, the Poincar\'e $t$-operator $\Theta_t \le\cdot,0\pr $ associated with $(P_{J_h})$
\[
\dot{u} = - u + g \le u \pr \quad\text{for } u\in \K
\]
is homotopic to $g$ via condensing homotopy without fixed points on $\partial U$.\\
\indent Indeed, for a fixed $t$ and for $x\in\cl U$, $z\in [0,1]$ let 
\[
\widehat{\Psi}_t \le z ,x \pr :=
\begin{cases}
 \le 1 - \frac{1}{z \le t+z -zt \pr} \pr x + \frac{1}{z \le t+z-zt \pr}\Theta_{zt} \le x,0 \pr \quad\text{for }\; z\in (0,1],\\
g \le x \pr \qquad\qquad\qquad\qquad\qquad\qquad\qquad\;\;\;\;\text{for } z=0,
\end{cases}
\]
As in  \cite[Prop. 4.3]{cw} one shows that $\widehat{\Psi}_t$ is continuous and there is $t_1 \in (0,1), t_1 < t_1^\prime$ such that  $\widehat{\Psi}_t$ is condensing and $\widehat{\Psi}_t\le z,x\pr \neq x$ for $z\in [0,1], x\in \partial U$ provided for $t\in (0, t_1]$. \\
\indent Take $0<t\leq t_1$ and let $\Psi_t := r\circ \widehat{\Psi}_t :[0,1] \times \mathrm{cl}\,U \to \K$. Then $\Psi_t$ is continuous and condensing as the superposition of $\widehat{\Psi}_t $ with the nonexpansive metric projection $r$. We shall make of the following general observation.
\begin{lem}\label{lem-o-rzucie}
If $x\in \K$, then $y\in x + \bigcup _{h>0} h \le \K -x \pr $ and $r\le y\pr = x $ if and only if $y=x$.
\end{lem}
\begin{proof} There are $h_0 >0 $ and $k_0 \in \K$ such that $y= x + h_0 \le k_0 -x\pr$. The so-called variational chracterization of $r$ (see e,g. \cite[Th. 5.2]{brezis}) yields that for all $k\in \K$, 
$\langle k - r\le y \pr ,y - r \le y \pr \rangle = \langle k-x, y-x \rangle \leq 0$
Hence $\langle k-x, \le x + h_0 \le k_0 -x \pr \pr -x \rangle = h_0\langle k-x , k_0 - x \rangle \leq 0$ for every $k\in \K$.
Thus, $k_0 = x$ and $y = x + h_0 \le k_0 - x \pr  =x$.
\end{proof}
Observe that  $\Psi _t \le x,0\pr = r\circ  g \le x \pr =r\circ J_h \le x + h f \le x \pr \pr$ and, in view of \eqref{inkluzja-retrakcja-brak-pktow-stalych}, $\Psi _t \le x,0\pr \neq x$ for $x\in \partial U$. If $z\in (0,1]$, then for $x\in\cl U$
\[
\widehat{\Psi}_t \le x,z\pr = x + \frac{1}{z \le t+z -zt \pr}\le \Theta_{zt} \le x,0 \pr - x \pr \in x +\bigcup_{h\geq 0}h \le \K - x \pr.
\]
For such $z$ and $x$, by Lemma \ref{lem-o-rzucie}, $x = \Psi_t \le s ,x \pr = r\circ \widehat{\Psi}_t \le s,x\pr $ if and only if $x = \widehat{\Psi}_t \le s,x\pr$. Therefore, in view of \cite[Prop. 4.3., Claim 2.]{cw}, $\Psi _t \le s,x \pr \neq x $ for $s\in (0,1], x\in \partial U$.
As a result: $\Psi_t \le x,z\pr \neq x$ for $z\in [0,1]$, $x\in \partial U$ provided  $t\in (0,t_1]$.
\indent Finally. in order to obtain a compact homotopy joining $g$ to $\Theta(\cdot,0)$ we will rely on the following result.
\begin{lem}\label{lem-zwarta-zamiast-kondensujacej}\cite[Th. 3.1.4.]{meas-noncomp}, \cite[Def. 3.1.7.]{meas-noncomp}
Let $X\subset \K$ be bounded closed and $f_0,f_1:X\to\K$ be compact maps.
If $h: [0,1]\times X \to \K$ is a condensing homotopy joining $f_0$ to $f_1$, then there is the compact homotopy $H:[0,1]\times X \to\K$ joining $f_0$ to $f_1$ having the same fixed points as $h$ does.\hfill $\square$\end{lem}

This establishes  Proposition \ref{rew}, since Lemma \ref{lem-zwarta-zamiast-kondensujacej} produces a compact homotopy out of $\Psi_t$ (recall that $g$ and $P_t$ are compact).\end{proof}

\noindent To sum up, we proved that for sufficiently small $t>0$ and $h>0$:
\begin{enumerate}
\item the $c$-admissible pair $\le p_t , q_t \pr $ is $c$-homotopic to the pair $\le \mathrm{id}_{\mathrm{cl}\,U},P_t \pr$ via the compact $c$-homotopy without fixed point on $[0,1]\times \partial U$ (cf. \ref{lem-homotopia-laczaca-t-Poincare-z-prawa-strona});
\item the Poincar\'{e} $t$-operator $P_t:\mathrm{cl}\,U \to \K$ is homotopic to $r\circ J_h \le I + hf \pr : \mathrm{cl}\,U \to \K$ via the compact homotopy without fixed points on $\partial U$ (cf. Corollary \ref{wniosek-zamiana-kondensujacej-zwarta}).
\end{enumerate}
Thus, in view of \ref{uwaga-stopien-party-jednowartosciowej} we have
\[
\mathrm{Ind}_\K \le\le p_t , q_t \pr , U \pr = \Ind_\K \le \le \mathrm{id}_{\mathrm{cl}\,U},P_t \pr ,U \pr = \Ind_\K \le P_t , U \pr = \mathrm{deg}_\K \le A + F \le 0 , \cdot \pr , U \pr .
\]
This concludes the proof of Theorem \ref{glowne}.\hfill $\square$

Let us finally formulate a direct single-valued counterpart of this result being a direct generalization of \cite{cw}. 

\begin{cor}
Assume that $A$ and $U$ are the same as in Theorem \ref{glowne}.
Additionally, let $f:[0,1]\times \K \to \E$ be tangent to $\K$ locally Lipschitz function with sublinear growth.
If $0\neq Ax + f \le 0 , x \pr $, $x\in \partial U$, then there is $t_0 \in (0,1]$ such that for every $t\in (0,t_0]$
\[\mathrm{Ind}_\K \le P_t, U \pr = \mathrm{deg}_\K \le A +f \le 0, \cdot \pr , U \pr,\]
where $P_t : \mathrm{cl}\,U\to \K$ is the Poincar\'e $t$-operator associated with the problem $\dot{u}\le t \pr =A u\le t\pr  + f \le t, u \pr $.
\end{cor}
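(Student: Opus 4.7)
The idea is to invoke Theorem \ref{glowne} with the single-valued map $f$ viewed as a set-valued map. Define $F:[0,1]\times\K\m\E$ by $F(t,x):=\{f(t,x)\}$. The values of $F$ are convex and weakly compact, so $(F_1)$ holds. Sublinear growth of $f$ gives $(F_3)$, and tangency of $f$ to $\K$ gives $(F_4)$. Since $f$ is (locally Lipschitz and hence) continuous on $[0,1]\times\K$, the singleton-valued map $F$ is H-usc jointly in $(t,x)$, so hypothesis $(F)$ of Theorem \ref{glowne} is satisfied.

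Next, because $f$ is locally Lipschitz with sublinear growth and tangent to $\K$, the Cauchy problem $\dot u=Au+f(t,u)$, $u(0)=x\in\K$, admits a \emph{unique} mild solution surviving in $\K$ (this is the classical result used already in the proof of Theorem \ref{tw.o-istnieniu}; cf.\ \cite[Th.\ 7.2]{bothe}). Consequently the solution map $\Sigma$ associated with \eqref{inkluzja-rozn} for this $F$ is single-valued and coincides with $x\mapsto P(\cdot;x)$, and the Poincar\'e $t$-operator $\Sigma_t$ coincides with $P_t$. In particular the $c$-admissible pair $(p_t,q_t)$ from \eqref{c-adm} represents the single-valued compact map $P_t:\mathrm{cl}\,U\to\K$.

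Applying Theorem \ref{glowne} we obtain $t_0\in(0,1]$ such that for every $t\in(0,t_0]$
\[
\mathrm{Ind}_\K((p_t,q_t),U)=\mathrm{deg}_\K(A+f(0,\cdot),U).
\]
By Remark \ref{uwaga-stopien-party-jednowartosciowej}(ii), the constrained fixed point index of a compact $c$-admissible pair representing a single-valued map equals the classical constrained fixed point index of that map, so
\[
\mathrm{Ind}_\K((p_t,q_t),U)=\mathrm{Ind}_\K(P_t,U).
\]
Combining the two equalities yields the claim. The only nontrivial point is the uniqueness/identification of $\Sigma_t$ with $P_t$; everything else is a direct specialization of Theorem \ref{glowne}.
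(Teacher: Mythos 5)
Your proof is correct and follows the only natural route, which is precisely what the paper implicitly intends when it calls the corollary a ``direct single-valued counterpart'' of Theorem \ref{glowne}: embed $f$ as the singleton-valued $F(t,x)=\{f(t,x)\}$, verify $(F_1)$, $(F_3)$, $(F_4)$ and $(F)$ (all immediate from convexity/weak compactness of points, the sublinear-growth and tangency hypotheses, and continuity of $f$), invoke Theorem \ref{glowne}, and then use uniqueness of mild solutions (for locally Lipschitz tangent right-hand sides, as already invoked in Step~1 of the proof of Theorem \ref{tw.o-istnieniu}) to identify $\Sigma_t$ with the single-valued $P_t$, finishing via Remark \ref{uwaga-stopien-party-jednowartosciowej}(ii). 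The only point worth making explicit, which you leave implicit under ``$A$ and $U$ the same as in Theorem \ref{glowne}'', is that the corollary also inherits the standing hypotheses of that theorem that $\E$ is a separable Hilbert space and $\|S(t)\|\le e^{\omega t}$; but that is clearly the intended reading, so there is no gap.
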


\vspace{-4mm}

\end{document}

\textbf{Step 2.} It is enough to show that if $H:[0,1] \times \K \m \E$  is H-usc, then so is $\overline{\mathrm{conv}}H$.
Take $t_0,\in [0,1], x_0 \in \K $ and $\e>0$.
By the assumption there is $\delta >0$ such that
\[
H\le t,x\pr \subset H \le t_0,x_0\pr + B_\E \le 0 ,\e /2 \pr\subset \mathrm{conv}H \le t_0 ,x_0\pr + B_\E \le 0, \e/2 \pr.
\]
Due to the convexity of the right-hand side we get
\[
\mathrm{conv}H \le t ,x \pr \subset \mathrm{conv}H \le t_0 ,x_0\pr + B_\E \le 0, \e /2 \pr,
\]
Thus, for $t\in \le t_0 - \delta , t_0 + \delta \pr \cap [0,1], x \in B_\K \le 0, \delta \pr$ we get
\[
\overline{\mathrm{conv}}H \le t, x \pr \subset \mathrm{cl}\left [ \mathrm{conv}H \le t_0 ,x_0\pr + B_\E \le 0, \e /2 \pr \right ] \subset \overline{\mathrm{conv}}H \le t_0 ,x_0\pr + B_\E \le 0, \e \pr.
\qedhere
\]

Note that $\widehat{F}$ also satisfies $\le F_ 4 \pr $ and $\le F_5\pr$.
What is more, $\widehat{F}$ has convex weakly compact values.
Indeed, convexity is obvious and since $F$ maps compact sets onto weakly compact (cf. \ref{slabo-usc}), the set $F \le [0,t]\times \{ x \} \pr $ is weakly compact for every $t\in [0,1], x\in \K$.
By the Krein---\v{S}mulian theorem the set $\overline{\mathrm{conv}}F\le [0,t]\times \{x\} \pr  = \widehat{F }\le t,x \pr$ is weakly compact. \